\documentclass[11pt]{article}
\usepackage{amsfonts,amssymb,amsmath,amscd,amsthm}
\usepackage{mathrsfs,mathtools,bbm}
\usepackage{latexsym}
\usepackage{color}
\usepackage{verbatim,enumitem}
\usepackage{hyperref}
\input diagxy
\usepackage[all,cmtip]{xy}
\usepackage{tikz}
\usetikzlibrary{matrix} 
\usepackage[total={155mm,230mm}]{geometry}

\theoremstyle{plain}
\newtheorem{thm}{Theorem}[section]
\newtheorem{lem}[thm]{Lemma}
\newtheorem{prop}[thm]{Proposition}
\newtheorem{cor}[thm]{Corollary}
\theoremstyle{definition}
\newtheorem{defn}[thm]{Definition}

\newtheorem{exmp}[thm]{Example}

\begin{document}
\title{d-spectral bitopological spaces\thanks{This work is supported by the National Natural Science Foundation of China (No. 12371463).}}
\author{Hang Yang, Dexue Zhang\\ {\normalsize School of Mathematics, Sichuan University, Chengdu, China}\\ {\normalsize Email: yanghangscu@qq.com, dxzhang@scu.edu.cn}}
\date{}
\maketitle

\begin{abstract} We introduce and study the category of \emph{d-spectral spaces}, a bitopological analogue of the classical spectral spaces of Stone and Hochster. A d-spectral space is a compact, d-sober bitopological space such that both open set lattices are coherent frames, where d-sobriety is the bitopological notion of sobriety due to Jung and Moshier. We show that the category of spectral spaces embeds into the category of d-spectral spaces as a simultaneously reflective and coreflective full subcategory. Moreover, we prove that d-spectral spaces are precisely the spectra of d-lattices. Key to this result is the d-lattice of compact open sets associated to a d-spectral space and the spectrum construction for d-lattices. We also show that the patch space of a d-spectral space is d-Boolean and that the de Groot dual of a d-spectral space is again d-spectral, mirroring the corresponding classical properties of spectral spaces. Our results demonstrate that d-spectral spaces form a natural and well-behaved bitopological extension of the spectral space framework. \vskip 2pt 

\noindent  {Keywords}: Bitopological space; d-spectral space; d-Boolean space; d-lattice; d-Boolean algebra; d-frame       \vskip 2pt 

\noindent  {MSC(2020)}:   
54E55, 
18F70 
\end{abstract}

\section{Introduction}

Spectral spaces originated in Marshall Stone's study of topological representations of distributive lattices. In \cite{Stone36, Stone37a}, Stone established a dual equivalence between the category of Boolean algebras and the category of Boolean spaces—compact, Hausdorff, and totally disconnected spaces. He then extended this duality to the category of distributive lattices by topologizing the set of prime ideals (equivalently, prime filters) of a distributive lattice; the resulting topological space is now called the \emph{spectrum} of the lattice. Stone isolated certain topological properties of these spectra and proved that any topological space satisfying those properties arises as the spectrum of a distributive lattice, thereby establishing a duality for distributive lattices \cite{Stone37b}.

The spectrum of a distributive lattice is precisely a compact, sober space whose open set lattice is a coherent frame. Stone did not name these spaces; in 1969, Hochster \cite{Hochster} coined the term \emph{spectral spaces} for them. Since their introduction, spectral spaces have grown considerably in significance, finding applications in algebraic geometry, categorical logic, domain theory, theoretical computer science, and beyond. For a comprehensive treatment, we refer the reader to the monograph \cite{DST}.

The aim of this paper is to introduce a bitopological analogue of spectral spaces, which we call \emph{d-spectral spaces}. The key ingredient is the notion of d-sobriety, due to Jung and Moshier \cite{JM2006,JM2008}. Precisely, a d-spectral space is a compact and d-sober bitopological space $(X,\tau_+,\tau_-)$ such that both open set lattices $\tau_+$ and $\tau_-$ are coherent frames. We develop a basic theory for these spaces.

First, we prove that the category ${\bf Spectral}$ of spectral spaces and spectral maps can be embedded in the category ${\bf dSpectral}$ of d-spectral spaces and d-spectral maps as a simultaneously reflective and coreflective full subcategory. This, together with the classical fact that Boolean spaces form a simultaneously reflective and coreflective full subcategory of ${\bf Spectral}$, shows that the categories of spectral spaces and d-spectral spaces provide well-behaved extensions of the category of Boolean spaces:
\[
{\bf BoolSp}\subseteq {\bf Spectral} \subseteq {\bf dSpectral}.
\]

Second, we prove that the spectrum of every d-lattice introduced in \cite{YZ2026} is a d-spectral space, and conversely, that every d-spectral space is the spectrum of some d-lattice.

The contents are arranged as follows. Section \ref{review} provides a brief review of spectral spaces, including concepts such as the de Groot dual and the patch topology, for motivation and ease of reference. Section \ref{dS} introduces the notion of d-spectral spaces. Section \ref{dLB} reviews some basic ideas about d-Boolean algebras, d-lattices, and d-frames. The last two sections establish the main results of the paper mentioned above.

\section{Review of spectral spaces}\label{review}

This section provides a brief review of spectral spaces, serving both as motivation and as a convenient reference. Our standard sources are \cite{AHS} for category theory, \cite{DST,Hochster} for the theory of spectral spaces, and \cite{Engelking} for general topology.

Throughout this paper, every distributive lattice is assumed to be bounded, i.e., it has a top element $1$ and a bottom element $0$. An element $a$ of a distributive lattice $(L,\sqsubseteq)$ is \emph{complemented} if there exists $b\in L$ such that
\[
a\sqcup b = 1 \quad\text{and}\quad a\sqcap b = 0.
\]
Such a $b$, when it exists, is necessarily unique and is called the \emph{complement} of $a$. A Boolean algebra is a distributive lattice in which every element is complemented.

An element $x$ of a partially ordered set $P$ is called \emph{finite} (or \emph{compact}) \cite{Gierz2003,Johnstone} if, for every directed subset $D\subseteq P$,
\[
x \sqsubseteq \bigsqcup D \quad\Longrightarrow\quad x \sqsubseteq d \text{ for some } d\in D.
\]

A frame $L$ is \emph{coherent} \cite[page 63]{Johnstone} if:
\begin{enumerate}[label=\rm(\roman*)] \setlength{\itemsep}{0pt}
\item every element of $L$ is a join of finite elements; and
\item the finite elements form a sublattice of $L$, i.e., $1$ is finite and the meet of two finite elements is again finite.
\end{enumerate}
Coherent frames are precisely the frames of ideals of distributive lattices; see \cite[page 64]{Johnstone}.

\begin{defn}(Hochster \cite{Hochster})
A topological space $(X,\tau)$ is called \emph{spectral} if it is compact, sober, and its compact open sets are closed under finite intersection and form a basis for $\tau$.
\end{defn}
Said differently, a topological space $(X,\tau)$ is spectral if it is compact, sober, and its open set lattice $\tau$ is a coherent frame. Although coherence of $\tau$ already implies compactness, we retain compactness to align with the corresponding definition of d-spectral spaces in the next section.

A continuous map $f\colon (X,\tau) \to (X',\tau')$ between topological spaces is said to be \emph{spectral} if the preimage of every compact open subset of $(X',\tau')$ is a compact open subset of $(X,\tau)$. The category of spectral spaces and spectral maps is denoted by
\[
{\bf Spectral}.
\]

For a spectral space $(X,\tau)$, the coherence of $\tau$ implies that the collection of compact open subsets of $(X,\tau)$ forms a distributive lattice. Associating to each spectral space this distributive lattice yields a contravariant functor
\[
{K}\colon {\bf Spectral} \to  {\bf DisLat}^{\rm op},
\]
from the category of spectral spaces and spectral maps to the opposite of the category ${\bf DisLat}$ of distributive lattices and lattice homomorphisms. To see that ${K}$ is an equivalence of categories, we require a few additional notions.

A \emph{filter} of a distributive lattice $(L,\sqsubseteq)$ is a non-empty subset $F\subseteq L$ such that:
\begin{itemize} \setlength{\itemsep}{0pt}
\item if $a\in F$ and $a\sqsubseteq b$, then $b\in F$;
\item if $a,b\in F$, then $a\sqcap b\in F$.
\end{itemize}
A filter $F$ is \emph{proper} if $0\notin F$, and \emph{prime} if it is proper and
\[
a\sqcup b\in F \;\Longrightarrow\; a\in F \text{ or } b\in F.
\]
The notions of \emph{ideal}, \emph{proper ideal}, and \emph{prime ideal} are defined dually. In any distributive lattice, the complement of a prime ideal is a prime filter, and vice versa.

For a distributive lattice $L$, we denote by $\operatorname{Fil} L$ the set of all filters of $L$, and by $\operatorname{Idl} L$ the set of all ideals of $L$.

Let $\operatorname{Spec} L$ denote the set of all prime filters of $L$. For each $a\in L$, define
\[
\Phi(a)=\{F\in \operatorname{Spec} L : a\in F\}.
\]
The sets $\Phi(a)$ form a basis for a topology on $\operatorname{Spec} L$. The resulting topological space is called the \emph{spectrum} of $L$. Every open subset of $\operatorname{Spec} L$ is of the form
\[
\Phi(I)=\{F\in \operatorname{Spec} L : I\cap F \neq \varnothing\}
\]
for some ideal $I$ of $L$. In particular, the compact open subsets of the spectrum are precisely the sets $\Phi(a)$ for $a\in L$.

The spectrum of any distributive lattice is a spectral space, so we obtain a contravariant functor
\[
\operatorname{Spec}\colon {\bf DisLat}^{\rm op} \to  {\bf Spectral}.
\]
Together, the functors
\[
{K}\colon {\bf Spectral} \to {\bf DisLat}^{\rm op}
\quad\text{and}\quad
\operatorname{Spec}\colon {\bf DisLat}^{\rm op} \to {\bf Spectral}
\]
witness the dual equivalence between the category of distributive lattices and the category of spectral spaces. For details, see \cite{Johnstone} and \cite{GG2024}.

Now let $(X,\tau)$ be a topological space. A subset of $(X,\tau)$ is called \emph{saturated} \cite{Gierz2003} if it is an upper set with respect to the specialization order. The \emph{co-compact topology} $\tau^k$ on $X$ is the topology having the compact saturated subsets of $(X,\tau)$ as a subbasis of closed sets. For more information about co-compact topology, the reader is referred to   \cite{DST,Gierz2003}. The space $(X,\tau^k)$ is called the \emph{de Groot dual} of $(X,\tau)$. The \emph{patch topology} is defined as $\tau\vee\tau^k$, the join of $\tau$ and $\tau^k$, and the space $(X,\tau\vee\tau^k)$ is called the \emph{patch space} of $(X,\tau)$.

The patch topology and the de Groot dual are natural constructions on spectral spaces that reveal deep connections to orders and compact Hausdorff spaces. 
The following lemma is known in the literature; see, for instance, Section 3.5 in \cite{DST}. The proof included here is intended to motivate the proof of Lemma \ref{dspec preserves involution}.

\begin{lem}\label{spectrum preserves involution}
For every distributive lattice $L$, the spectrum of $L^{\rm op}$ is homeomorphic to the de Groot dual of the spectrum of $L$.
\end{lem}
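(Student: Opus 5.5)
The natural map is the complement. Prime filters of $L^{\rm op}$ are exactly the prime ideals of $L$, and the complement of a prime ideal of $L$ is a prime filter of $L$. So I would define
\[
h\colon \operatorname{Spec} L^{\rm op}\to \operatorname{Spec} L,\qquad h(P)=L\setminus P .
\]
This is a bijection. It remains to show that it carries the topology of $\operatorname{Spec} L^{\rm op}$ onto the co-compact topology $\tau^k$ of $\operatorname{Spec} L$. The basic open sets of $\operatorname{Spec} L^{\rm op}$ are the sets $\{P : a\in P\}$, and
\[
h(\{P : a\in P\})=\{F\in\operatorname{Spec} L : a\notin F\}=\operatorname{Spec} L\setminus \Phi(a).
\]
So the topology transported by $h$ is the one generated by the complements of the compact open sets $\Phi(a)$. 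The lemma therefore reduces to showing that the compact saturated sets of $\operatorname{Spec} L$ generate, as closed sets, the same topology as the sets $\Phi(a)$.

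First I would check that each $\Phi(a)$ is compact saturated. Open sets are upper sets in the specialization order, and the compact open sets are exactly the $\Phi(a)$, as recalled above. So every $\Phi(a)$ is a closed subbasic set of $\tau^k$, and the transported topology is contained in $\tau^k$.

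The converse is the main step: every compact saturated $Q\subseteq \operatorname{Spec} L$ must be an intersection of sets $\Phi(a)$. In the spectral space $\operatorname{Spec} L$ the specialization order is inclusion of prime filters, and a saturated set is the intersection of the open sets containing it. Take $F\notin Q$. For each $G\in Q$ we have $G\not\subseteq F$, so there is $a_G\in G\setminus F$. Then $Q\subseteq\bigcup_G \Phi(a_G)$, and compactness of $Q$ yields finitely many $a_1,\dots,a_n$ with $Q\subseteq \Phi(a_1\sqcup\cdots\sqcup a_n)$. Since $F$ is prime and contains none of the $a_i$, it does not contain $a_1\sqcup\cdots\sqcup a_n$, so $F\notin\Phi(a_1\sqcup\cdots\sqcup a_n)$. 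Hence $Q=\bigcap\{\Phi(a): Q\subseteq\Phi(a)\}$ is closed in the transported topology.

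Consequently $\tau^k$ is generated by the complements of the sets $\Phi(a)$, and $h$ is a homeomorphism from $\operatorname{Spec} L^{\rm op}$ onto $(\operatorname{Spec} L,\tau^k)$. The only delicate point is the compactness argument in the converse step, which combines the description of the specialization order with primeness of $F$.
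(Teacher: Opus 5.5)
Your proof is correct and follows the same route as the paper. Both arguments identify the prime filters of $L^{\rm op}$ with the complements of prime filters of $L$, note that the transported basic opens are the sets $\operatorname{Spec} L\setminus\Phi(a)$, and use that every compact saturated subset of $\operatorname{Spec} L$ is an intersection of sets $\Phi(a)$. The only difference is that you write out the compactness-and-primeness argument for that last fact, which the paper dismisses as routine.
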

\begin{proof}
Let $(\operatorname{Spec} L,\tau)$ be the spectrum of $L$. Since the compact open subsets of $(\operatorname{Spec} L,\tau)$ are precisely the sets $\Phi(a)$ ($a\in L$), and these form a basis for $\tau$, it follows routinely that every compact saturated subset of $(\operatorname{Spec} L,\tau)$ can be expressed as an intersection of sets of the form $\Phi(a)$. Consequently, the complements $\operatorname{Spec} L\setminus \Phi(a)$ constitute a basis for the co-compact topology $\tau^k$.

Now we prove the conclusion. It is straightforward to verify that $F\subseteq L$ is a prime filter of $L$ if and only if $L\setminus F$ is a prime filter of $L^{\rm op}$. Thus the map
\[
F \longmapsto L\setminus F
\]
is a bijection from the set of prime filters of $L$ to that of $L^{\rm op}$. Consequently, the spectrum of $L^{\rm op}$ may be identified with the set $\operatorname{Spec} L$, equipped with the topology $\tau^{\rm op}$ generated by the sets
\[
\Phi^{\rm op}(a)=\{F\in \operatorname{Spec} L : a\in L\setminus F\}=\{F\in \operatorname{Spec} L : a\notin F\}, \qquad a\in L.
\]
For all $a\in L$ and $F\in \operatorname{Spec} L$,
\[
F\in \operatorname{Spec} L\setminus \Phi(a) \iff a\notin F \iff F\in \Phi^{\rm op}(a),
\]
so $\tau^{\rm op}=\tau^k$. Hence the spectrum of $L^{\rm op}$ is homeomorphic to the de Groot dual of the spectrum of $L$.
\end{proof}
 
\begin{thm}
For every spectral space $(X,\tau)$:
\begin{enumerate}[label=\rm(\roman*)] \setlength{\itemsep}{0pt}
\item {\rm (\cite[Proposition 8]{Hochster})} The de Groot dual $(X,\tau^k)$ is a spectral space;
\item {\rm(\cite[Proposition 4]{Hochster})} The patch space $(X,\tau\vee\tau^k)$ is a Boolean space.
\end{enumerate}
\end{thm}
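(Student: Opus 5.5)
We reduce both parts to the lattice side by way of the duality recalled above. Since $(X,\tau)$ is spectral, $K$ and $\operatorname{Spec}$ give a homeomorphism $(X,\tau)\cong(\operatorname{Spec} L,\tau)$, where $L=K(X,\tau)$ is the lattice of compact open sets. The co-compact topology is defined purely from the topology, so a homeomorphism carries $\tau^k$ to $\tau^k$ and $\tau\vee\tau^k$ to $\tau\vee\tau^k$. We may therefore assume $X=\operatorname{Spec} L$.

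For (i), we invoke Lemma \ref{spectrum preserves involution}. It says $(\operatorname{Spec} L,\tau^k)$ is homeomorphic to $\operatorname{Spec}(L^{\rm op})$. Since $L^{\rm op}$ is a bounded distributive lattice and the spectrum of any distributive lattice is spectral, (i) follows at once.

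For (ii), the proof of Lemma \ref{spectrum preserves involution} shows that the sets $\operatorname{Spec} L\setminus\Phi(a)$, $a\in L$, form a basis of $\tau^k$. Hence $\tau\vee\tau^k$ has the subbasis $\{\Phi(a),\ \operatorname{Spec} L\setminus\Phi(a) : a\in L\}$, and all of these sets are clopen in the patch topology. We then check the three properties of a Boolean space.

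\textbf{Hausdorff and totally disconnected.} Take distinct prime filters $F\neq G$, say with $a\in F\setminus G$. Then $\Phi(a)$ is a clopen set containing $F$ but not $G$. This separation by clopen sets gives both properties.

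\textbf{Compactness.} This is the main step. I would use Alexander's subbasis lemma. Suppose
\[
\{\Phi(a_i)\}_{i\in I}\cup\{\operatorname{Spec} L\setminus\Phi(b_j)\}_{j\in J}
\]
covers $\operatorname{Spec} L$ but has no finite subcover. Let $\mathcal I$ be the ideal generated by the $a_i$ and $\mathcal F$ the filter generated by the $b_j$. If $\mathcal I\cap\mathcal F\neq\varnothing$, then $a_{i_1}\sqcup\dots\sqcup a_{i_n}\sqsupseteq b_{j_1}\sqcap\dots\sqcap b_{j_m}$ for some finite selection. Every prime filter then either omits some $b_{j_k}$ or contains $b_{j_1}\sqcap\dots\sqcap b_{j_m}$, hence contains some $a_{i_l}$ by primeness. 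So finitely many members cover, a contradiction. (This also handles the degenerate cases $I=\varnothing$ or $J=\varnothing$, where $0$ or $1$ plays the role of the empty join or meet.) Therefore $\mathcal I\cap\mathcal F=\varnothing$.

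By the prime filter theorem for distributive lattices, there is a prime filter $P\supseteq\mathcal F$ with $P\cap\mathcal I=\varnothing$. This $P$ contains every $b_j$ and no $a_i$, so it is not covered, a contradiction.

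The only nontrivial ingredient is the prime filter separation theorem, a consequence of Zorn's lemma that I would take as standard. With it, the patch space is compact, Hausdorff and totally disconnected, that is, Boolean.
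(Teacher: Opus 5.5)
Your proof is correct. The paper gives no proof of this theorem; it simply cites Hochster. Its surrounding material, however, shows the route it has in mind.

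For (i) you follow that route exactly. Lemma \ref{spectrum preserves involution} is included precisely to identify $(\operatorname{Spec}L,\tau^k)$ with $\operatorname{Spec}(L^{\rm op})$. The bitopological analogue, Theorem \ref{patch is spectral}(i), is proved by the same reduction through Lemma \ref{dspec preserves involution}.

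For (ii) your route is genuinely different. The paper's analogue (Theorem \ref{spectrum of Gamma = patch} together with Theorem \ref{patch is spectral}(ii)) never checks compactness of the patch topology directly. Instead it identifies the patch space with the spectrum of the Boolean (there, d-Boolean) reflection of $L$. The canonical map from that spectrum is a continuous bijection by the universal property and is then shown to be open. Boolean-ness is read off from the duality for Boolean algebras. Indeed, Proposition \ref{spectrum is compact} runs the dependency the other way: it derives compactness of $\operatorname{dSpec}\mathcal{L}$ from its patch space being d-Boolean.

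Your argument via Alexander's subbasis lemma and the prime filter theorem is more elementary and self-contained. Your handling of the degenerate cases (empty join $0$, empty meet $1$) is right. What the reflection route buys is structure: it identifies the patch-clopen sets with the free Boolean extension of $L$, it carries over to d-lattices, and it underlies the description of the patch space as the Boolean coreflection. The underlying compactness argument is the same in both; the paper's route hides your prime-filter argument inside the compactness of the Stone space of a Boolean algebra.
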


\section{d-spectral spaces} \label{dS}

A \emph{bitopological space} \cite{Kelly} is a triple $(X,\tau_+,\tau_-)$, where $X$ is a set and $\tau_+,\tau_-$ are topologies on $X$. A \emph{continuous} (also called bicontinuous in the literature) map
\[
f\colon (X,\tau_+,\tau_-) \to  (X',\tau_+',\tau_-')
\]
is a function $f\colon X\to X'$ such that both
\[
f\colon (X,\tau_+)\to (X',\tau_+')
\quad\text{and}\quad
f\colon (X,\tau_-)\to (X',\tau_-')
\]
are continuous. The category of bitopological spaces and continuous maps is denoted by
\[
{\bf BiTop}.
\]

\begin{defn}\label{compact bitop}
A bitopological space $(X,\tau_+,\tau_-)$ is said to be:
\begin{enumerate}[label=\rm(\roman*)] \setlength{\itemsep}{0pt}
\item $T_0$ if the topological space $(X,\tau_{+}\vee\tau_{-})$ is $T_0$;
\item compact if the topological space $(X,\tau_{+}\vee\tau_{-})$ is compact;
\item zero-dimensional (called pairwise zero-dimensional in \cite{Reilly}) if $\tau_+$ has a basis of $\tau_-$-closed sets and $\tau_-$ has a basis of $\tau_+$-closed sets.
\end{enumerate}
\end{defn}

For a bitopological space $(X,\tau_+,\tau_-)$, we write $\leq_+$ and $\leq_-$ for the specialization orders of the topological spaces $(X,\tau_+)$ and $(X,\tau_-)$, respectively. Let $\leq$ denote the intersection of $\leq_+$ and the opposite of $\leq_-$; that is,
\[
\leq \;=\; \leq_+ \cap \geq_-.
\]

\begin{defn}\label{order separated} (\cite[Definition 3.7]{JM2008})
A bitopological space $(X;\tau_{+},\tau_{-})$ is \emph{order-separated} provided that the binary relation $\leq = \leq_+\cap \geq_-$ is a partial order, and that if $x\not\leq y$, then there exist a $\tau_+$-neighborhood $U$ of $x$ and a $\tau_-$-neighborhood $V$ of $y$ such that $U\cap V=\varnothing$.
\end{defn}

Lemma 3.8 of Jung and Moshier \cite{JM2008} states that in an order-separated bitopological space $(X,\tau_+,\tau_-)$, the specialization order $\leq_+$ of $(X,\tau_+)$ is dual to the specialization order $\leq_-$ of $(X,\tau_-)$; hence
\[
\leq \;=\; \leq_+ \;=\; \geq_-.
\]

\begin{lem}\label{compact opens and dclopens} {\rm(\cite[Lemma 4.1.29]{Jakl})}
Let $(X,\tau_+,\tau_-)$ be an order-separated bitopological space.
\begin{enumerate}[label=\rm(\roman*)] \setlength{\itemsep}{0pt}
\item Every compact saturated subset of $(X,\tau_+)$ is closed in $(X,\tau_-)$. In particular, every compact open subset of $(X,\tau_+)$ is closed in $(X,\tau_-)$.
\item Every compact saturated subset of $(X,\tau_-)$ is closed in $(X,\tau_+)$. In particular, every compact open subset of $(X,\tau_-)$ is closed in $(X,\tau_+)$.
\end{enumerate}
\end{lem}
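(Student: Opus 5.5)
By the symmetry of order-separation under swapping $\tau_+$ and $\tau_-$ (the relation $\leq_+\cap\geq_-$ becomes its opposite, and the separation condition is symmetric in form), it suffices to prove (i). Statement (ii) then follows by applying (i) to $(X,\tau_-,\tau_+)$. The second sentence of (i) is immediate from the first, since a compact open subset of $(X,\tau_+)$ is compact, and being open it is an upper set for $\leq_+$, hence saturated.

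So let $K$ be a compact saturated subset of $(X,\tau_+)$. I will show that $X\setminus K$ is $\tau_-$-open. Fix $y\notin K$. For each $x\in K$, we have $x\not\leq y$. Indeed, if $x\leq y$ then $x\leq_+ y$ (using $\leq=\leq_+$, by Lemma 3.8 of Jung and Moshier), and saturation of $K$ would force $y\in K$. Order-separation then gives a $\tau_+$-open $U_x\ni x$ and a $\tau_-$-open $V_x\ni y$ with $U_x\cap V_x=\varnothing$.

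The sets $U_x$ ($x\in K$) cover $K$, so by $\tau_+$-compactness finitely many $U_{x_1},\dots,U_{x_n}$ suffice. Put $V=V_{x_1}\cap\dots\cap V_{x_n}$. This is a $\tau_-$-open neighbourhood of $y$, and it is disjoint from $U_{x_1}\cup\dots\cup U_{x_n}\supseteq K$. Hence $V\subseteq X\setminus K$, so $X\setminus K$ is $\tau_-$-open, as required.

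The only delicate point is the step converting "$y\notin K$" into "$x\not\leq y$ for all $x\in K$". This needs saturation with respect to $\leq_+$ together with the identification $\leq=\leq_+$ from the cited lemma of Jung and Moshier. Everything else is the standard compactness argument showing that compact sets are closed in Hausdorff spaces, adapted to two topologies.
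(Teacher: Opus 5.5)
Your proof is correct. The paper does not prove this lemma itself; it imports it from Jakl's thesis, so there is no in-paper argument to compare against. Your argument is the standard one: the bitopological analogue of ``compact sets in Hausdorff spaces are closed.'' Order-separation plays the role of the Hausdorff axiom, and saturation of $K$ supplies the hypothesis $x\not\leq y$. Your reduction of (ii) to (i) by swapping $\tau_+$ and $\tau_-$ is also valid.

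One correction to your commentary. The step you call delicate does not need Lemma 3.8 of Jung and Moshier. From $x\leq y$ you get $x\leq_+ y$ directly from the definition $\leq\,=\,\leq_+\cap\geq_-$, so only the trivial inclusion $\leq\,\subseteq\,\leq_+$ is used.
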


A bitopological space $(X,\tau_+,\tau_-)$ is \emph{totally order-separated} if the binary relation $\leq$ is a partial order, and whenever $x\not\leq y$, there exists a $\tau_+$-open and $\tau_-$-closed set containing $x$ but not $y$. 

A totally order-separated bitopological space is exactly a totally order-disconnected space in the sense of \cite[Definition 8.12]{JM2006}. The term \emph{totally order-separated} is chosen because, for a topological space $(X,\tau)$, the bitopological space $(X,\tau,\tau)$ is totally order-separated if and only if $(X,\tau)$ is totally separated in the sense of \cite[page 69]{Johnstone}. Clearly, every totally order-separated bitopological space is order-separated.

\begin{prop}{\rm(\cite{YZ2026})} \label{Stone bitopological space}
For a compact and $T_0$ bitopological space $(X,\tau_+,\tau_-)$, the following are equivalent:
\begin{enumerate}[label=\rm(\arabic*)] \setlength{\itemsep}{0pt}
\item $(X,\tau_+,\tau_-)$ is totally order-separated;
\item $(X,\tau_+,\tau_-)$ is zero-dimensional.
\end{enumerate}
\end{prop}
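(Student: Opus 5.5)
We prove the two implications separately; the hypotheses of compactness and $T_0$ are needed only for (1)$\Rightarrow$(2).

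For (2)$\Rightarrow$(1), assume $(X,\tau_+,\tau_-)$ is zero-dimensional. First we check that $\leq=\leq_+\cap\geq_-$ is a partial order. It is clearly a preorder, so only antisymmetry needs proof. Suppose $x\leq y$ and $y\leq x$. Then $x$ and $y$ have the same $\tau_+$-neighbourhoods and the same $\tau_-$-neighbourhoods, hence the same neighbourhoods in $\tau_+\vee\tau_-$. The $T_0$ axiom then gives $x=y$.

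Next suppose $x\not\leq y$. Then either $x\not\leq_+ y$ or $y\not\leq_- x$. If $x\not\leq_+y$, there is a $\tau_+$-open set containing $x$ but not $y$. Since $\tau_+$ has a basis of $\tau_-$-closed sets, we may shrink it to a basic set $U\ni x$ that is $\tau_+$-open, $\tau_-$-closed and misses $y$. If instead $y\not\leq_- x$, there is a $\tau_-$-open set $V$ containing $y$ but not $x$, and we may take it to be basic, hence $\tau_+$-closed. Then $X\setminus V$ is $\tau_+$-open and $\tau_-$-closed, contains $x$, and misses $y$. This establishes total order-separation.

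For (1)$\Rightarrow$(2), assume total order-separation together with compactness and $T_0$. Let $W$ be $\tau_+$-open and $x\in W$. We must find a $\tau_+$-open, $\tau_-$-closed set $C$ with $x\in C\subseteq W$.

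\emph{Step 1.} Fix $y\notin W$. Since $W$ is $\tau_+$-open and $y\notin W$, we have $x\not\leq_+ y$, and therefore $x\not\leq y$. Total order-separation gives a set $C_y$ that is $\tau_+$-open and $\tau_-$-closed, contains $x$, and misses $y$.

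\emph{Step 2.} The complements $X\setminus C_y$ are $\tau_-$-open and $\tau_+$-closed, and they cover $X\setminus W$. The set $X\setminus W$ is $\tau_+$-closed, hence closed in $\tau_+\vee\tau_-$. Compactness of $(X,\tau_+\vee\tau_-)$ therefore makes it compact in the join topology. Each $X\setminus C_y$ is open in the join topology, so finitely many of them, say for $y_1,\dots,y_n$, cover $X\setminus W$.

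\emph{Step 3.} Put $C=C_{y_1}\cap\dots\cap C_{y_n}$. As a finite intersection, $C$ is $\tau_+$-open and $\tau_-$-closed, it contains $x$, and by Step 2 it is contained in $W$. If $X\setminus W$ is empty, simply take $C=X$.

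\emph{The $\tau_-$ side.} The symmetric argument handles $\tau_-$. Let $W$ be $\tau_-$-open, $x\in W$ and $y\notin W$. Then $x\not\leq_- y$, so $y\not\leq x$. Total order-separation gives a set $D$ that is $\tau_+$-open, $\tau_-$-closed, contains $y$, and misses $x$. Its complement $X\setminus D$ is $\tau_-$-open and $\tau_+$-closed, contains $x$, and misses $y$. The same compactness argument as in Steps 2 and 3 now produces a $\tau_-$-open, $\tau_+$-closed basic set containing $x$ and contained in $W$.

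The main point requiring care is the compactness step. It works because $\tau_+$-closed sets are closed in the join topology, and because finite intersections preserve the property of being simultaneously open in one topology and closed in the other.
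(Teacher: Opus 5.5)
Your argument is correct in both directions. The paper gives no proof of this proposition; it quotes it from \cite{YZ2026}, so there is no in-paper argument to compare with. What you wrote is the standard one, the bitopological analogue of the proof that a compact totally separated space is zero-dimensional. The real work is done by the compactness, in $\tau_+\vee\tau_-$, of the $\tau_+$-closed (respectively $\tau_-$-closed) set $X\setminus W$, together with the fact that finite intersections of $\tau_+$-open, $\tau_-$-closed sets are again of that kind.

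One correction concerns your opening sentence. The two hypotheses are not both confined to (1)$\Rightarrow$(2). Compactness is used only there. $T_0$, however, is used only in (2)$\Rightarrow$(1), to get antisymmetry of $\leq$, exactly as your own argument shows. It is genuinely needed there: take $\tau_+=\tau_-=\{\varnothing,X\}$ on a two-point set $X$. This space is compact and zero-dimensional, but $\leq$ relates every pair of points, so it is not a partial order. In (1)$\Rightarrow$(2) the $T_0$ hypothesis is redundant. If two points have the same neighbourhoods in $\tau_+\vee\tau_-$, they are $\leq$-related in both directions, so antisymmetry of $\leq\;=\;\leq_+\cap\geq_-$ already forces $(X,\tau_+\vee\tau_-)$ to be $T_0$.
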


A bitopological space satisfying the equivalent conditions of Proposition \ref{Stone bitopological space} is called a \emph{d-Boolean bitopological space}, or simply a \emph{d-Boolean space}. The category of d-Boolean spaces and continuous maps is denoted by
\[
{\bf dBoolSp}.
\]

As noted in \cite{YZ2026}, d-Boolean spaces are precisely the compact and totally order-disconnected spaces in the sense of \cite{JM2006}, and also the pairwise Stone spaces in the sense of \cite[Definition 2.10]{BBGK}. It is known that the category of d-Boolean spaces is equivalent to the category of spectral spaces and spectral maps:

\begin{thm} {\rm(\cite[Theorem 4.7]{BBGK})}\label{Spec is equivalent to dBoolsp}
For every spectral space $(X,\tau)$, the bitopological space $(X,\tau,\tau^k)$ is d-Boolean. Moreover, the assignment
\[
(X,\tau)\longmapsto (X,\tau,\tau^k)
\]
gives rise to an equivalence of categories
\[
\lambda\colon {\bf Spectral} \to  {\bf dBoolSp}.
\]
\end{thm}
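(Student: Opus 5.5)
Write $\lambda(X,\tau)=(X,\tau,\tau^k)$. The proof has two parts: showing $\lambda(X,\tau)$ is d-Boolean, and showing $\lambda$ is an equivalence of categories. I plan to reduce both to the lattice side via the duality ${K}\dashv\operatorname{Spec}$ and Lemma \ref{spectrum preserves involution}.

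\emph{Step 1: $\lambda(X,\tau)$ is d-Boolean.} Since $(X,\tau)$ is spectral, we may assume $X=\operatorname{Spec} L$ for a distributive lattice $L$. By the proof of Lemma \ref{spectrum preserves involution}, the sets $\Phi(a)$ form a basis of $\tau$ and their complements $X\setminus\Phi(a)$ form a basis of $\tau^k$. Each basic open of $\tau$ is therefore $\tau^k$-closed, and each basic open of $\tau^k$ is $\tau$-closed, so $(X,\tau,\tau^k)$ is zero-dimensional.

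$T_0$ holds because $\tau$ is sober, hence already $T_0$.

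For compactness of $\tau\vee\tau^k$, Alexander's subbase lemma reduces the question to covers by sets $\Phi(a_i)$ and $X\setminus\Phi(b_j)$. Suppose no finite subcover exists. Then the filter generated by the $b_j$ misses the ideal generated by the $a_i$: if $b_{j_1}\sqcap\dots\sqcap b_{j_m}\sqsubseteq a_{i_1}\sqcup\dots\sqcup a_{i_n}$, the corresponding finitely many sets would already cover $X$. The prime filter theorem then gives a prime filter containing every $b_j$ and no $a_i$, and this point is not covered, a contradiction.

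Proposition \ref{Stone bitopological space} now yields that $\lambda(X,\tau)$ is d-Boolean.

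\emph{Step 2: functoriality and full faithfulness.} A spectral map $f$ satisfies $f^{-1}(X'\setminus\Phi'(b))=X\setminus f^{-1}(\Phi'(b))$, and $f^{-1}(\Phi'(b))$ is compact open, so it is a basic $\tau$-open set and its complement is basic $\tau^k$-open. Hence $f$ is $\tau^k$-continuous. Faithfulness is immediate because $\lambda$ does not change the underlying function.

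For fullness, take a bicontinuous $f$ and a compact open $U$ of $X'$. By Lemma \ref{compact opens and dclopens}, $U$ is $\tau'^k$-closed, so $f^{-1}(U)$ is $\tau$-open and $\tau^k$-closed. A $\tau^k$-closed set is closed in the compact patch topology, hence patch-compact, and therefore $\tau$-compact. So $f^{-1}(U)$ is compact open, and $f$ is spectral.

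\emph{Step 3: essential surjectivity (main obstacle).} Given a d-Boolean space $(X,\tau_+,\tau_-)$, I claim $(X,\tau_+)$ is spectral and $\tau_-=\tau_+^k$. Let $B$ be the set of $\tau_+$-open, $\tau_-$-closed sets. Such sets are closed in the compact space $\tau_+\vee\tau_-$, hence compact, and they form a basis of $\tau_+$ closed under finite unions and intersections. So $B$ is a distributive lattice.

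Next I would show the evaluation map $x\mapsto\{U\in B: x\in U\}$ is a bijection from $X$ onto $\operatorname{Spec} B$:
\begin{itemize}
\item Injectivity comes from total order-separation together with antisymmetry of $\leq$.
\item Surjectivity is the hard part. Given a prime filter $P$, the sets in $P$ together with the complements of sets in $B\setminus P$ have the finite intersection property, since each complement is $\tau_-$-open. By patch compactness they have a common point $x$, and $x$ realizes exactly $P$.
\end{itemize}
This makes $(X,\tau_+)\cong\operatorname{Spec} B$ spectral, with $B$ exactly its compact opens. Using zero-dimensionality, $\tau_-$ is generated by complements of members of $B$, which by Step 1 is $\tau_+^k$. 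The main obstacle is precisely this verification: the patch-compactness argument and the identification $\tau_-=\tau_+^k$ must be checked carefully using total order-separation.
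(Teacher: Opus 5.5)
The paper does not prove this statement; it is quoted from \cite{BBGK}, so there is no internal argument to compare against and your proposal has to stand on its own. It does. Steps 1--3 form a correct, self-contained proof. It uses only the classical spectrum duality, the prime filter theorem, and the paper's own Lemma \ref{spectrum preserves involution}, Lemma \ref{compact opens and dclopens} and Proposition \ref{Stone bitopological space}. Step 3 in fact gives more than essential surjectivity: you show every d-Boolean space $(X,\tau_+,\tau_-)$ is literally $\lambda(X,\tau_+)$. Together with Step 2, this makes $\lambda$ an isomorphism of categories, not merely an equivalence.

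One justification in Step 3 is misstated and should be repaired. The finite intersection property of $P\cup\{X\setminus V: V\in B\setminus P\}$ has nothing to do with the complements being $\tau_-$-open. It comes from $P$ being a prime filter. Suppose $U_1\cap\dots\cap U_n\subseteq V_1\cup\dots\cup V_m$ with $U_i\in P$ and $V_j\in B\setminus P$. Then $V_1\cup\dots\cup V_m\in P$, since $P$ is an upper set containing $U_1\cap\dots\cap U_n$. Primeness then gives some $V_j\in P$, a contradiction; the case $m=0$ is excluded because $\varnothing\notin P$.

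What the appeal to compactness of $\tau_+\vee\tau_-$ actually needs is that all these sets are closed in $\tau_+\vee\tau_-$. This holds because members of $P$ are $\tau_-$-closed and complements of members of $B$ are $\tau_+$-closed.

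Two smaller points. First, justify the homeomorphism $(X,\tau_+)\cong\operatorname{Spec}B$ explicitly: the evaluation map $e$ satisfies $e^{-1}(\Phi(U))=U$ for $U\in B$, so the bijection $e$ carries the basis $B$ onto the basis $\{\Phi(U):U\in B\}$.

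Second, the identification $\tau_-=\tau_+^k$ is less delicate than you suggest and does not use total order-separation. A set is $\tau_-$-open and $\tau_+$-closed iff its complement lies in $B$. So by zero-dimensionality, $\{X\setminus U: U\in B\}$ is a basis of $\tau_-$. Since $B$ is exactly the lattice of compact opens of the spectral space $(X,\tau_+)$, the argument in the proof of Lemma \ref{spectrum preserves involution} shows this same family is a basis of $\tau_+^k$. In your argument, total order-separation is needed only for injectivity of $e$.
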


In this paper, we introduce a subcategory of bitopological spaces, namely the category ${\bf dSpectral}$ of d-spectral bitopological spaces and d-spectral maps. The relationship between d-Boolean spaces and d-spectral spaces is analogous to that between classical Boolean spaces and spectral spaces, yielding the following chain of categorical inclusions:
\[
{\bf BoolSp}\subseteq {\bf Spectral}\cong {\bf dBoolSp}\subseteq {\bf dSpectral}.
\]

It is known that Boolean spaces form a simultaneously reflective and coreflective full subcategory of ${\bf Spectral}$. We extend this fact to the bitopological setting by proving that ${\bf dBoolSp}$ is likewise a simultaneously reflective and coreflective full subcategory of ${\bf dSpectral}$. Consequently, the categories of spectral spaces and d-spectral spaces provide well-behaved extensions of the category of Boolean spaces.

\begin{defn} (\cite{JM2008})
A bitopological space $(X,\tau_+,\tau_-)$ is \emph{d-sober} if for every pair $(K_+,K_-)$, where $K_+$ is an irreducible closed set in $(X,\tau_+)$ and $K_-$ is an irreducible closed set in $(X,\tau_-)$, satisfying:
\begin{enumerate}[label=\rm(\roman*)] \setlength{\itemsep}{0pt}
\item for all $U\in\tau_+$ and $V\in\tau_-$, $U\cap V=\varnothing$ implies either $K_+\cap U=\varnothing$ or $K_-\cap V=\varnothing$;
\item for all $U\in\tau_+$ and $V\in\tau_-$, $U\cup V=X$ implies either $K_+\cap U\neq\varnothing$ or $K_-\cap V\neq\varnothing$,
\end{enumerate}
there exists a unique $x\in X$ such that $K_+=\overline{\{x\}}$ in $(X,\tau_+)$ and $K_-=\overline{\{x\}}$ in $(X,\tau_-)$.
\end{defn}

It is known that every order-separated bitopological space is d-sober \cite[Theorem 4.13]{JM2006}, and every d-sober bitopological space is $T_0$. As remarked in \cite[page 34]{JM2006}, d-sobriety is a subtle constraint on the interaction between the topologies $\tau_+$ and $\tau_-$. For instance, we have:

\begin{lem}{\rm(\cite[Lemma 4.3]{JM2006})}
Let $\tau_+$ and $\tau_-$ be two $T_0$ topologies on a set $X$, and assume that the bitopological space $(X,\tau_+,\tau_-)$ is d-sober. Then the intersection of the specialization orders $\leq_+$ and $\leq_-$ is the identity relation on $X$.
\end{lem}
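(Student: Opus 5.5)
The plan is to start from points $x,y\in X$ with $x\leq_+ y$ and $x\leq_- y$, and to apply d-sobriety to the \emph{mixed} pair
\[
(K_+,K_-)=\bigl(\overline{\{y\}}^{\,+},\ \overline{\{x\}}^{\,-}\bigr),
\]
where $\overline{\{y\}}^{\,+}$ denotes the closure in $(X,\tau_+)$ and $\overline{\{x\}}^{\,-}$ the closure in $(X,\tau_-)$. Closures of singletons are irreducible closed sets, so this is an admissible pair. The d-sobriety axiom then produces a point $z$ that simultaneously represents $y$ in $\tau_+$ and $x$ in $\tau_-$. The $T_0$ hypotheses on the two topologies will force $z=y$ and $z=x$.

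Throughout I use the basic fact that the closure of a point meets an open set iff the point lies in that open set. I also use the definition of specialization order: $x\leq_\pm y$ means every $\tau_\pm$-open set containing $x$ contains $y$.

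For condition (i), let $U\in\tau_+$ and $V\in\tau_-$ with $U\cap V=\varnothing$, and suppose both $K_+\cap U\neq\varnothing$ and $K_-\cap V\neq\varnothing$. Then $y\in U$ and $x\in V$. Since $x\leq_- y$ and $V$ is $\tau_-$-open, we get $y\in V$, so $y\in U\cap V$, a contradiction.

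For condition (ii), let $U\cup V=X$. If $x\in V$ then $K_-\cap V\neq\varnothing$. Otherwise $x\in U$, and $x\leq_+ y$ gives $y\in U$, so $K_+\cap U\neq\varnothing$. This is why the choice of pair matters: the "straight" pair $(\overline{\{x\}}^{\,+},\overline{\{y\}}^{\,-})$ would not obviously satisfy (i), whereas the mixed pair uses each inequality exactly once in the right direction.

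By d-sobriety there is $z\in X$ with $\overline{\{z\}}^{\,+}=\overline{\{y\}}^{\,+}$ and $\overline{\{z\}}^{\,-}=\overline{\{x\}}^{\,-}$. Since $\tau_+$ is $T_0$, equal point closures give $z=y$. Since $\tau_-$ is $T_0$, they give $z=x$. Hence $x=y$. Conversely, the identity relation is trivially contained in $\leq_+\cap\leq_-$, so the intersection is exactly the identity.

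The only real step is the choice of the mixed pair; verifying (i) and (ii) is then routine, and the uniqueness clause of d-sobriety is not even needed.
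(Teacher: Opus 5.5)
Your proof is correct and complete. The paper gives no proof of its own (it simply cites \cite[Lemma 4.3]{JM2006}), and your direct appeal to d-sobriety for the pair $(\overline{\{y\}}^{\,+},\overline{\{x\}}^{\,-})$, followed by $T_0$ in each topology, is the natural argument.

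One minor correction to your commentary: the pair $(\overline{\{x\}}^{\,+},\overline{\{y\}}^{\,-})$ works equally well. Condition (i) follows from $x\leq_+ y$ and condition (ii) from $x\leq_- y$, so the choice of pair is not actually the delicate point.
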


The above lemma implies that not every finite $T_0$ bitopological space is d-sober.

\begin{defn}\label{d-Spectral space}
A bitopological space $(X,\tau_+,\tau_-)$ is said to be \emph{d-spectral} if:
\begin{enumerate}[label=\rm(\roman*)] \setlength{\itemsep}{0pt}
\item $(X,\tau_+,\tau_-)$ is compact;
\item $(X,\tau_+,\tau_-)$ is d-sober;
\item both $\tau_+$ and $\tau_-$ are coherent frames.
\end{enumerate}
\end{defn}

The following proposition shows that d-Boolean spaces are precisely the order-separated d-spectral spaces.

\begin{prop}
A bitopological space $(X,\tau_+,\tau_-)$ is d-Boolean if and only if it is d-spectral and order-separated.
\end{prop}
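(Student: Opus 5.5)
The plan is to prove both implications by identifying the compact open subsets of $(X,\tau_+)$ and $(X,\tau_-)$ with the "d-clopen" sets, and then to apply Proposition \ref{Stone bitopological space} and Lemma \ref{compact opens and dclopens}.

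For the implication from right to left, suppose $(X,\tau_+,\tau_-)$ is d-spectral and order-separated.
\begin{enumerate}[label=\rm(\alph*)] \setlength{\itemsep}{0pt}
\item Compactness holds by definition.
\item The space is $T_0$, because every d-sober bitopological space is $T_0$.
\item Since $\tau_+$ is a coherent frame, the compact open subsets of $(X,\tau_+)$ form a basis of $\tau_+$. By Lemma \ref{compact opens and dclopens}(i), each of them is $\tau_-$-closed. So $\tau_+$ has a basis of $\tau_-$-closed sets.
\item Symmetrically, using Lemma \ref{compact opens and dclopens}(ii), $\tau_-$ has a basis of $\tau_+$-closed sets.
\end{enumerate}
Hence the space is compact, $T_0$ and zero-dimensional, so it is d-Boolean by Proposition \ref{Stone bitopological space}.

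For the implication from left to right, let $(X,\tau_+,\tau_-)$ be d-Boolean. Then it is totally order-separated, hence order-separated, hence d-sober by \cite[Theorem 4.13]{JM2006}. Compactness is given. It remains to show that $\tau_+$ is coherent; the argument for $\tau_-$ is symmetric. The key claim is that the compact open subsets of $(X,\tau_+)$ are exactly the sets that are $\tau_+$-open and $\tau_-$-closed.
\begin{enumerate}[label=\rm(\alph*)] \setlength{\itemsep}{0pt}
\item One inclusion is Lemma \ref{compact opens and dclopens}(i).
\item Conversely, suppose $U$ is $\tau_+$-open and $\tau_-$-closed. Because $\tau_+$ is contained in $\tau_+\vee\tau_-$, $U$ is closed in $\tau_+\vee\tau_-$; it is $\tau_+$-open by assumption. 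A closed subset of the compact space $(X,\tau_+\vee\tau_-)$ is compact. Since $\tau_+$ is coarser than $\tau_+\vee\tau_-$, $U$ is also compact in $\tau_+$.
\end{enumerate}

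Given the claim, coherence of $\tau_+$ follows in two steps.
\begin{enumerate}[label=\rm(\alph*)] \setlength{\itemsep}{0pt}
\item By zero-dimensionality, $\tau_+$ has a basis of $\tau_-$-closed sets. By the claim these basic sets are compact open, so every element of $\tau_+$ is a join of finite elements.
\item The class of sets that are $\tau_+$-open and $\tau_-$-closed contains $X$ and is closed under binary intersections. By the claim, the same is true of the compact open sets.
\end{enumerate}
Finally, in the frame $\tau_+$ the finite elements are precisely the compact open sets, because directed joins are unions. Hence $\tau_+$ is coherent.

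The only point requiring care is step (b) of the claim, namely passing from compactness in the join topology to compactness in $\tau_+$. This is immediate once the topologies are compared, so I expect no real obstacle.
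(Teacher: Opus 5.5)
Your proof is correct and follows the paper's argument essentially step for step. For sufficiency you get zero-dimensionality from Lemma~\ref{compact opens and dclopens}, and for necessity you identify the compact open sets of $(X,\tau_+)$ with the $\tau_+$-open, $\tau_-$-closed sets; you also spell out the $T_0$ and d-sobriety checks that the paper leaves implicit. One small slip: in step (b) of your claim, $U$ is closed in $\tau_+\vee\tau_-$ because it is $\tau_-$-closed and $\tau_-\subseteq\tau_+\vee\tau_-$, not because $\tau_+\subseteq\tau_+\vee\tau_-$.
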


\begin{proof}
For sufficiency, it remains to show that $(X,\tau_+,\tau_-)$ is zero-dimensional. Note that the finite elements of $\tau_+$ are exactly the compact open subsets of $(X,\tau_+)$. Since $\tau_+$ is a coherent frame, the compact open subsets of $(X,\tau_+)$ form a basis for $\tau_+$. As $(X,\tau_+,\tau_-)$ is order-separated, Lemma \ref{compact opens and dclopens} implies that these compact open sets are $\tau_-$-closed. Hence, the $\tau_+$-open and $\tau_-$-closed sets form a basis for $(X,\tau_+)$. Similarly, the $\tau_-$-open and $\tau_+$-closed sets form a basis for $(X,\tau_-)$. Therefore, $(X,\tau_+,\tau_-)$ is zero-dimensional.

For necessity, since $(X,\tau_+,\tau_-)$ is d-Boolean, it is totally order-separated and hence order-separated. It remains to prove that $\tau_+$ and $\tau_-$ are coherent frames. In order to see that $\tau_+$ is a coherent frame, by zero-dimensionality of $(X,\tau_+,\tau_-)$, it is sufficient to show that compact open sets of $(X,\tau_+)$ are precisely the $\tau_+$-open and $\tau_-$-closed sets. By Lemma \ref{compact opens and dclopens}, the compact open subsets of $(X,\tau_+)$ are $\tau_-$-closed. Conversely, compactness of $(X,\tau_+\vee\tau_-)$ implies that any $\tau_+$-open and $\tau_-$-closed set is compact in $(X,\tau_+)$. Thus, the compact open subsets of $(X,\tau_+)$ are precisely the $\tau_+$-open and $\tau_-$-closed sets. Likewise, $\tau_-$ is a coherent frame.
\end{proof}

\begin{prop}
Let $(X,\tau_+,\tau_-)$ be a finite bitopological space. Then:
\begin{enumerate}[label=\rm(\roman*)] \setlength{\itemsep}{0pt}
\item $(X,\tau_+,\tau_-)$ is d-spectral if and only if it is d-sober;
\item $(X,\tau_+,\tau_-)$ is d-Boolean if and only if it is order-separated.
\end{enumerate}
\end{prop}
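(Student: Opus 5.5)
For (i), one direction is immediate from Definition \ref{d-Spectral space}. For the other, assume $(X,\tau_+,\tau_-)$ is finite and d-sober; I will check compactness and coherence. Compactness is trivial, since any topology on a finite set is compact; in particular $(X,\tau_+\vee\tau_-)$ is compact. For coherence, note that $\tau_+$ is a finite topology, so every open set is compact. Hence every element of the frame $\tau_+$ is finite, because any directed family of opens has a largest member. It follows that every element is a join of finite elements (namely itself), and that $X$ and binary meets of finite elements are finite. Thus $\tau_+$ is coherent, and likewise $\tau_-$. So a finite bitopological space is d-spectral exactly when it is d-sober.

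For (ii), if $(X,\tau_+,\tau_-)$ is d-Boolean, it is totally order-separated, hence order-separated, as noted after the definition of totally order-separated spaces. For the converse, assume $X$ is finite and order-separated. Since $X$ is finite, it is compact. Since $\leq$ is a partial order, the space is $T_0$: if $x\neq y$, then $x\not\leq y$ or $y\not\leq x$, so $x$ and $y$ differ in their $\leq_+$ or $\leq_-$ relations, and one of the topologies separates them. It then suffices to verify total order-separation and invoke Proposition \ref{Stone bitopological space}. Alternatively, one could combine part (i) with the proposition just proved: order-separated spaces are d-sober by \cite[Theorem 4.13]{JM2006}, so by (i) the space is d-spectral, and being order-separated it is d-Boolean by the preceding proposition. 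This second route is the cleanest, and I will use it. It needs only the cited fact that order-separated implies d-sober, together with (i).

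If instead I wanted a direct argument, the key step would be this: for $x\not\leq y$, take $U={\uparrow_{\leq_+}}x$, the smallest $\tau_+$-open set containing $x$, which exists because $X$ is finite. By Jung and Moshier's Lemma 3.8 we have $\leq=\leq_+=\geq_-$, so $U$ is a $\leq_-$-down-set and hence $\tau_-$-closed. It contains $x$ but not $y$. However, the route via (i) avoids this and has no real obstacle. The only point needing care is the observation that all opens in a finite topology are finite elements of the frame.
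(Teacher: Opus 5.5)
Your proof is correct and follows the route the paper has in mind when it calls this ``straightforward'': for (i), finiteness makes compactness and the coherence of $\tau_+$ and $\tau_-$ automatic, since every open set is a finite element of its frame. For (ii), combining the fact that order-separated spaces are d-sober with part (i) and the preceding proposition (d-Boolean $\iff$ d-spectral and order-separated) is exactly right. Your sketched direct argument would also work: the smallest $\tau_+$-open set ${\uparrow}x$ is a $\le_-$-down-set, and in a finite space such a set is $\tau_-$-closed.
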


\begin{proof} Straightforward. \end{proof}

\begin{defn}
For a bitopological space $(X,\tau_+,\tau_-)$, the bitopological space
\[
(X, \tau_+^k, \tau_-^k)
\]
is called the \emph{de Groot dual} of $(X,\tau_+,\tau_-)$; the bitopological space
\[
(X,\tau_+\vee\tau_-^k,\; \tau_-\vee\tau_+^k)
\]
is called the \emph{patch space} of $(X,\tau_+,\tau_-)$.
\end{defn}

Note that for the patch space of $(X,\tau_+,\tau_-)$, the first topology is the join of $\tau_+$ and the co-compact topology of $(X,\tau_-)$, while the second topology is the join of $\tau_-$ and the co-compact topology of $(X,\tau_+)$.

\begin{exmp}
Let $\mathbb{B}$ be the Boolean algebra $\{0,1,{t\!t},{f\!\!f}\}$, where $0$ is the bottom, $1$ is the top, and ${t\!t},{f\!\!f}$ are complements of each other, as shown below:
\[
\bfig \morphism(0,0)/@{-}/<-250,-250>[1`{f\!\!f};]  
\morphism(0,0)/@{-}/<250,-250>[1`{t\!t};] 
\morphism(-250,-250)/@{-}/<250,-250>[{f\!\!f}`0;]  
\morphism(250,-250)/@{-}/<-250,-250>[{t\!t}`0;] \efig
\]
Let  
\[
\tau_+=\{\mathbb{B},\varnothing,\{1,{t\!t}\}\}, \quad
\tau_-=\{\mathbb{B},\varnothing,\{1,{f\!\!f}\}\}.
\]
Then $(\mathbb{B},\tau_+,\tau_-)$ is a bitopological space, which we also denote by $\mathbb{B}$. This space is precisely the bitopological space $\mathbb{S}.\mathbb{S}$ of Jung and Moshier \cite[page 38]{JM2006}.
\begin{enumerate}[label=\rm(\roman*)] \setlength{\itemsep}{0pt}
\item The bitopological space $\mathbb{B}$ is d-sober \cite[Example 4.2]{JM2006}, hence d-spectral.
\item The de Groot dual of $\mathbb{B}$ is the space $(\mathbb{B},\tau_+^k,\tau_-^k)$ with
\[
\tau_+^k=\{\mathbb{B},\varnothing,\{0,{f\!\!f}\}\}, \quad
\tau_-^k=\{\mathbb{B},\varnothing,\{0,{t\!t}\}\}.
\]
Clearly, the de Groot dual of $\mathbb{B}$ is homeomorphic to itself, hence d-spectral.
\item The patch space of $\mathbb{B}$, denoted by $\mathbb{D}$, has underlying set $\mathbb{B}$, first topology
\[
\{\mathbb{B},\varnothing,\{0,{t\!t}\},\{1,{t\!t}\},\{{t\!t}\}\},
\]
and second topology
\[
\{\mathbb{B},\varnothing,\{0,{f\!\!f}\},\{1,{f\!\!f}\},\{{f\!\!f}\}\}.
\]
The specialization order of the first topology is
\[
\bfig \morphism(0,0)/@{-}/<-250,-250>[{t\!t}`1;]  
\morphism(0,0)/@{-}/<250,-250>[{t\!t}`0;] 
\morphism(-250,-250)/@{-}/<250,-250>[1`{f\!\!f};]  
\morphism(250,-250)/@{-}/<-250,-250>[0`{f\!\!f};] \efig
\]
It is easily verified that $\mathbb{D}$ is order-separated, hence d-Boolean. The space $\mathbb{D}$ appears in \cite[Remark 5.20]{YZ2026} as a dualizing object for a duality between d-Boolean spaces and d-Boolean algebras.
\end{enumerate}
\end{exmp}

We shall say that a continuous map
\[
f\colon (X,\tau_+,\tau_-) \to  (X',\tau_+',\tau_-')
\]
between bitopological spaces is \emph{d-spectral} if both
\[
f\colon (X,\tau_+) \to (X',\tau_+')
\quad\text{and}\quad
f\colon (X,\tau_-) \to (X',\tau_-')
\]
are spectral maps between topological spaces. The category of d-spectral spaces and d-spectral maps is denoted by
\[
{\bf dSpectral}.
\]

\begin{prop}
Every continuous map between d-Boolean spaces is d-spectral.
\end{prop}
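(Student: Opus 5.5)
We have to show that for a continuous map $f\colon (X,\tau_+,\tau_-)\to (X',\tau_+',\tau_-')$ between d-Boolean spaces, preimages of compact open sets in $(X',\tau_+')$ are compact open in $(X,\tau_+)$. The symmetric statement for the $-$ topologies then follows by interchanging the roles of $+$ and $-$. The key fact is the characterization obtained in the proof of the preceding proposition. In a d-Boolean space, the compact open subsets of $(X,\tau_+)$ are exactly the sets that are $\tau_+$-open and $\tau_-$-closed. Likewise, the compact open subsets of $(X,\tau_-)$ are exactly the $\tau_-$-open and $\tau_+$-closed sets.

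So let $U'$ be compact open in $(X',\tau_+')$. We proceed in four steps.

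\begin{enumerate}[label=\rm(\arabic*)]
\item By Lemma \ref{compact opens and dclopens}, applied to the order-separated space $(X',\tau_+',\tau_-')$, $U'$ is $\tau_-'$-closed. Hence $U'$ is $\tau_+'$-open and $\tau_-'$-closed.
\item Since $f$ is continuous from $\tau_+$ to $\tau_+'$ and from $\tau_-$ to $\tau_-'$, the preimage $f^{-1}(U')$ is $\tau_+$-open and $\tau_-$-closed.
\item Being $\tau_-$-closed, $f^{-1}(U')$ is closed in the compact space $(X,\tau_+\vee\tau_-)$, so it is compact there. Since $\tau_+\subseteq\tau_+\vee\tau_-$, it is a fortiori compact in $(X,\tau_+)$.
\item Thus $f^{-1}(U')$ is compact open in $(X,\tau_+)$, so $f\colon(X,\tau_+)\to(X',\tau_+')$ is spectral.
\end{enumerate}

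The same argument with $+$ and $-$ swapped shows that $f\colon(X,\tau_-)\to(X',\tau_-')$ is spectral, using part (ii) of Lemma \ref{compact opens and dclopens}. There is no real obstacle. The only point needing care is that compactness is transferred through the join topology: closedness in the coarser topology $\tau_-$ gives closedness in $\tau_+\vee\tau_-$, and compactness in the finer topology gives compactness in the coarser one.
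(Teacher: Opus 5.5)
Your proposal is correct and follows the paper's proof essentially step for step. You use Lemma \ref{compact opens and dclopens} to see that a compact open set of $(X',\tau_+')$ is $\tau_-'$-closed, pull it back by continuity, and use compactness of $(X,\tau_+\vee\tau_-)$ to get compactness in $(X,\tau_+)$, then argue symmetrically for the $-$ topologies.
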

\begin{proof}
Let
\[
f\colon (X,\tau_+,\tau_-)\to  (X',\tau_+',\tau_-')
\]
be a continuous map between d-Boolean spaces. By Lemma \ref{compact opens and dclopens}, each compact open set $U$ of $(X,\tau_+')$ is $\tau_+'$-open and $\tau_-'$-closed. Hence, $f^{-1}(U)$ is $\tau_+$-open and $\tau_-$-closed. Since $(X,\tau_+\vee\tau_-)$ is compact, $f^{-1}(U)$ is compact in $(X,\tau_+\vee\tau_-)$, and therefore a compact open subset of $(X,\tau_+)$. Thus,
\[
f\colon (X,\tau_+)\to (X',\tau_+')
\]
is a spectral map. Similarly,
\[
f\colon (X,\tau_-)\to (X',\tau_-')
\]
is a spectral map.
\end{proof}

Consequently, the category of d-Boolean spaces and continuous maps is a full subcategory of the category of d-spectral spaces and d-spectral maps.
 
\section{d-lattices and d-Boolean algebras}\label{dLB}

It is well-known that spectral spaces are spectra of distributive lattices. As we shall see, d-spectral bitopological spaces are precisely spectra of d-lattices. This section reviews some basic ideas about d-lattices, d-Boolean algebras and d-frames.

Suppose $(L,\sqsubseteq,0,1)$ is a distributive lattice, and ${t\!t},{f\!\!f}$ are a complementary pair of elements of $L$, i.e.,
\[
{t\!t}\sqcup{f\!\!f}=1 \quad\text{and}\quad {t\!t}\sqcap{f\!\!f}=0.
\]
Let $L_+$ denote the lower set $\downarrow\!{t\!t}$ and $L_-$ the lower set $\downarrow\!{f\!\!f}$ of $L$. Then the assignment
\[
a \mapsto (a\sqcap{t\!t},\; a\sqcap{f\!\!f})
\]
is an order isomorphism from $L$ to the product lattice $L_+\times L_-$; its inverse sends each element $(a,b)\in L_+\times L_-$ to $a\sqcup b$ in $(L,\sqsubseteq)$.

Using the isomorphism $L\cong L_+\times L_-$, we define a new order $\leq$ on $L$ by
\[
(a_1,b_1)\leq (a_2,b_2) \;\Longleftrightarrow\; a_1\sqsubseteq a_2 \text{ and } b_1\sqsupseteq b_2.
\]
It is readily verified that $(L,\leq)$ is a distributive lattice with top element ${t\!t}$ and bottom element ${f\!\!f}$. The meet $x\wedge y$ and join $x\vee y$ in $(L,\leq)$ are expressed in terms of the operations of $(L,\sqsubseteq)$ as follows:
\[
x\wedge y = (x\sqcap {f\!\!f})\sqcup (y\sqcap {f\!\!f})\sqcup (x\sqcap y),
\]
\[
x\vee y = (x\sqcap {t\!t})\sqcup (y\sqcap {t\!t})\sqcup (x\sqcap y).
\]
Following Jung and Moshier \cite{JM2006, JM2008}, we call $\sqsubseteq$ the \emph{information order} and $\leq$ the \emph{logic order} of $(L;{t\!t},{f\!\!f})$, respectively.

If $\{{t\!t},{f\!\!f}\}=\{1,0\}$, then the logic order $\leq$ coincides with either $\sqsubseteq$ or its opposite. To avoid this degeneracy, throughout this paper we assume that the complementary pair $\{{t\!t},{f\!\!f}\}$ is different from $\{1,0\}$.

\begin{defn} (\cite{Klinke,KJM})
A \emph{d-lattice} is a structure
\[
\mathcal{L}=(L;{t\!t},{f\!\!f};{\bf con},{\bf tot}),
\]
where $L$ is a distributive lattice, ${t\!t},{f\!\!f}$ are a complementary pair in $L$, and ${\bf con},{\bf tot}\subseteq L$ (called the \emph{consistency predicate} and the \emph{totality predicate}, respectively) satisfy the following conditions:
\begin{itemize} \setlength{\itemsep}{0pt}
\item ${t\!t},{f\!\!f}\in{\bf con}$;
\item ${t\!t},{f\!\!f}\in{\bf tot}$;
\item ${\bf con}$ is a lower set with respect to the information order $\sqsubseteq$;
\item ${\bf tot}$ is an upper set with respect to $\sqsubseteq$;
\item ${\bf con}$ and ${\bf tot}$ are sublattices of $(L,\leq,\wedge,\vee)$, i.e., sublattices under the logic order;
\item ({\bf con}--{\bf tot}) if $\alpha\in{\bf con}$, $\beta\in{\bf tot}$, and
\[
(\alpha\sqcap{t\!t}=\beta\sqcap{t\!t}) \quad\text{or}\quad (\alpha\sqcap{f\!\!f}=\beta\sqcap{f\!\!f}),
\]
then $\alpha\sqsubseteq\beta$.
\end{itemize}
\end{defn}

Let $\mathcal{L}=(L;{t\!t},{f\!\!f};{\bf con},{\bf tot})$ and $\mathcal{M}=(M;{t\!t}_M,{f\!\!f}_M;{\bf con}_M,{\bf tot}_M)$ be d-lattices. A \emph{d-lattice homomorphism}
\[
f\colon \mathcal{L}\to \mathcal{M}
\]
is a lattice homomorphism $f\colon L\to M$ that preserves ${t\!t}$, ${f\!\!f}$, ${\bf con}$, and ${\bf tot}$ in the sense that
\[
f({t\!t})={t\!t}_M,\quad f({f\!\!f})={f\!\!f}_M,\quad f({\bf con})\subseteq {\bf con}_M,\quad f({\bf tot})\subseteq {\bf tot}_M.
\]
The category of d-lattices and d-lattice homomorphisms is denoted by ${\bf dLat}$.

For each d-lattice $\mathcal{L}=(L;{t\!t},{f\!\!f};{\bf con},{\bf tot})$, the underlying lattice $L$ is isomorphic to the product $L_+\times L_-$, where $L_+=\;\downarrow\!{t\!t}$ and $L_-=\;\downarrow\!{f\!\!f}$. Thus, a d-lattice can be equivalently presented as a pair $(L_+,L_-)$ of distributive lattices together with two subsets of $L_+\times L_-$ subject to suitable conditions, as in \cite[Definition 1]{KJM} and \cite[Definition 2.1.2]{Klinke}. This alternative presentation is particularly convenient for constructing d-lattices. Both presentations will be used in this paper. To avoid confusion, we identify $L$ with $L_+\times L_-$, and in particular we identify $L_+$ and $L_-$ with the subsets
\[
\{(a,0):a\sqsubseteq{t\!t}\}\quad\text{and}\quad \{(0,b):b\sqsubseteq{f\!\!f}\}
\]
of the product, respectively.

\begin{center}
\begin{tikzpicture}[scale=0.6]  
\fill [lightgray] (0,0) rectangle (5,4); 
\draw[thick] (0,0) -- (5,0); 
\draw[thick] (0,0) -- (0,4);  
\node at (2.5,-0.4) {{\small $L_+$}}; 
\node at (5,-0.4) {{\small ${t\!t}$}}; 
\node at (-0.5,2) {{\small $L_-$}};  
\node at (-0.4,4) {{\small ${f\!\!f}$}}; 
\node at (-0.3,-0.3) {{\small $0$}}; 
\node at (5.3,4) {{\small $1$}}; 
\draw [fill] (0,4) circle [radius=0.05]; 
\draw [fill] (5,0) circle [radius=0.05]; 
\node at (2.5,-1.5) {{\small $L\cong L_+\times L_-$}};
\end{tikzpicture}
\end{center}

\begin{exmp}
There is a unique way to make the Boolean algebra $\mathbb{B}=\{0,1,{t\!t},{f\!\!f}\}$ into a d-lattice (indeed, a d-frame), namely by setting
\[
{\bf con}=\{0,{t\!t},{f\!\!f}\},\quad {\bf tot}=\{1,{t\!t},{f\!\!f}\}.
\]
In this paper we always assume that $\mathbb{B}$ is equipped with this d-lattice structure.
\end{exmp}

\begin{exmp}\label{KO}
For each d-spectral space $(X,\tau_+,\tau_-)$, let $K_+$ be the sublattice of $\tau_+$ consisting of the compact open subsets of $(X,\tau_+)$, and let $K_-$ be the analogous sublattice of $\tau_-$. Then the structure
\[
\mathrm{K}\mathcal{O}(X,\tau_+,\tau_-)\coloneqq (L;{t\!t},{f\!\!f};{\bf con},{\bf tot})
\]
is a d-lattice, where
\begin{itemize} \setlength{\itemsep}{0pt}
\item $L=K_+\times K_-$;
\item ${t\!t}=(X,\varnothing)$, ${f\!\!f}=(\varnothing,X)$;
\item ${\bf con}=\{(U,V)\in K_+\times K_- : U\cap V=\varnothing\}$;
\item ${\bf tot}=\{(U,V)\in K_+\times K_- : U\cup V=X\}$.
\end{itemize}
We call $\mathrm{K}\mathcal{O}(X,\tau_+,\tau_-)$ the \emph{d-lattice of compact open sets} of $(X,\tau_+,\tau_-)$. This assignment gives a contravariant functor
\[
\mathrm{K}\mathcal{O}\colon {\bf dSpectral}\to  {\bf dLat}^{\rm op}.
\]
\end{exmp}

The category ${\bf DisLat}$ of distributive lattices can be embedded into ${\bf dLat}$ in natural ways. One such embedding is the functor
\[
\lambda\colon {\bf DisLat}\to {\bf dLat}
\]
introduced in \cite{YZ2026}, which sends a distributive lattice $L$ to the d-lattice
\[
\lambda(L)\coloneqq (L\times L^{\rm op};{t\!t},{f\!\!f};{\bf con},{\bf tot}),
\]
where
\begin{itemize} \setlength{\itemsep}{0pt}
\item ${t\!t}=(1,1)$, ${f\!\!f}=(0,0)$;
\item ${\bf con}=\{(a,b)\in L\times L : a\sqsubseteq b\}$;
\item ${\bf tot}=\{(a,b)\in L\times L : a\sqsupseteq b\}$.
\end{itemize}
A schematic representation of the consistency and totality predicates in $\lambda(L)$ is given below:
\begin{center} 
\begin{tikzpicture}
\fill[lightgray] (0,0) -- (3,0) -- (3,3) -- cycle;
\draw[thick] (0,0) -- (3,0) -- (3,3) -- cycle;
\draw[thick] (0,0) -- (0,3) -- (3,3) -- cycle;
\node at (2,1) {{\bf tot}};
\node at (1,2) {{\bf con}};
\node at (3.2,3.1) {${t\!t}$};
\node at (-0.3,-0.1) {${f\!\!f}$};
\end{tikzpicture}
\end{center} 

\begin{defn}(\cite[Definition 2.2.2]{Klinke})
Let $\mathcal{L}=(L;{t\!t},{f\!\!f};{\bf con},{\bf tot})$ be a d-lattice and $x\in L$. We say that $x$ is \emph{d-complemented} if:
\begin{itemize} \setlength{\itemsep}{0pt}
\item either $x\in L_+$ and there exists a (necessarily unique) $x^\dag\in L_-$ such that
\[
x\sqcup x^\dag\in {\bf con}\cap{\bf tot};
\]
\item or $x\in L_-$ and there exists a (necessarily unique) $x^\dag\in L_+$ such that
\[
x^\dag\sqcup x\in {\bf con}\cap{\bf tot}.\]
\end{itemize}
In either case, $x^\dag$ is called a \emph{d-complement} of $x$.
\end{defn}

\begin{defn} \label{defn of d-boolean} (\cite{YZ2026})
A \emph{d-Boolean algebra} is a d-lattice $(L;{t\!t},{f\!\!f};{\bf con},{\bf tot})$ in which every element of $L_+$ and every element of $L_-$ is d-complemented. The full subcategory of ${\bf dLat}$ consisting of d-Boolean algebras is denoted by ${\bf dBA}$.
\end{defn}

\begin{prop}{\rm(\cite[Proposition 4.17]{YZ2026})}
For every distributive lattice $L$, the d-lattice $\lambda(L)$ is a d-Boolean algebra. Moreover, restricting the codomain of $\lambda\colon {\bf DisLat}\to{\bf dLat}$ to ${\bf dBA}$ yields an equivalence of categories
\[
\lambda\colon {\bf DisLat}\to  {\bf dBA}.
\]
\end{prop}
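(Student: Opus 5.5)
The plan is to check the two assertions separately: first that $\lambda(L)$ is a d-lattice in which every element of $L_+$ and $L_-$ is d-complemented, and then that $\lambda$ is full, faithful and essentially surjective onto ${\bf dBA}$. I write elements of $\lambda(L)$ as pairs $(a,b)\in L\times L$, and take $\lambda(f)=f\times f$ on a lattice homomorphism $f$.

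\emph{$\lambda(L)$ is a d-Boolean algebra.} The information order on $L\times L^{\rm op}$ is $(a,b)\sqsubseteq(a',b')$ iff $a\sqsubseteq a'$ and $b\sqsupseteq b'$. Since the second factor is reversed in the logic order, the logic order is the componentwise order of $L\times L$, with componentwise meets and joins.
\begin{itemize}
\item ${\bf con}=\{a\sqsubseteq b\}$ is $\sqsubseteq$-lower and ${\bf tot}$ is $\sqsubseteq$-upper.
\item Both are closed under componentwise $\sqcap,\sqcup$, so both are logic sublattices.
\item For (con--tot), take $(a,b)\in{\bf con}$ and $(c,d)\in{\bf tot}$. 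If $a=c$, then $d\sqsubseteq c=a\sqsubseteq b$, so $(a,b)\sqsubseteq(c,d)$. The case $b=d$ is symmetric.
\end{itemize}
Moreover ${\bf con}\cap{\bf tot}$ is the diagonal $\{(a,a)\}$. Hence the d-complement of $(a,\cdot)\in L_+$ is the $L_-$-element with second coordinate $a$, and conversely, so every element is d-complemented.

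\emph{Full faithfulness.} Let $g\colon\lambda(L)\to\lambda(M)$ be a d-lattice homomorphism. Because $g$ preserves ${t\!t}$ and ${f\!\!f}$, it preserves $\downarrow{t\!t}$ and $\downarrow{f\!\!f}$. So it splits as $g=g_+\times g_-$, where $g_+\colon L\to M$ and $g_-\colon L^{\rm op}\to M^{\rm op}$ are lattice homomorphisms; hence $g_-$ is also a lattice homomorphism $L\to M$. Since $g$ preserves ${\bf con}$ and ${\bf tot}$, it preserves their intersection, the diagonal. This forces $g_+(a)=g_-(a)$ for all $a$, so $g=\lambda(g_+)$. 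Faithfulness is clear, and $\lambda(f)$ is evidently a d-lattice homomorphism.

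\emph{Essential surjectivity, the main step.} Let $\mathcal D$ be a d-Boolean algebra and write $\dag\colon D_+\to D_-$ for d-complementation. The key lemma is: for $a\in D_+$ and $b\in D_-$,
\[
a\sqcup b\in{\bf con}\iff b\sqsubseteq a^\dag, \qquad a\sqcup b\in{\bf tot}\iff b\sqsupseteq a^\dag.
\]
\begin{itemize}
\item If $a\sqcup b\in{\bf con}$, compare it with $a\sqcup a^\dag\in{\bf tot}$. The two have equal ${t\!t}$-part, so (con--tot) gives $a\sqcup b\sqsubseteq a\sqcup a^\dag$, i.e.\ $b\sqsubseteq a^\dag$.
\item Conversely, if $b\sqsubseteq a^\dag$, then $a\sqcup b\sqsubseteq a\sqcup a^\dag\in{\bf con}$, and ${\bf con}$ is a lower set.
\item The ${\bf tot}$ case is dual.
\end{itemize}

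From the lemma, $\dag$ is order-reversing. If $a\sqsubseteq a'$, then $a\sqcup a'^\dag\sqsubseteq a'\sqcup a'^\dag\in{\bf con}$, so $a'^\dag\sqsubseteq a^\dag$. The reverse implication follows by applying the same argument to complements of elements of $D_-$. Uniqueness of d-complements makes $\dag$ a bijection, with inverse the d-complementation on $D_-$. Hence $\dag$ is a lattice isomorphism $D_+\cong D_-^{\rm op}$.

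Now define $D\to\lambda(D_+)$ by $a\sqcup b\mapsto(a,b^\dag)$. This is a lattice isomorphism sending ${t\!t},{f\!\!f}$ to the corresponding elements. By the lemma, $a\sqcup b\in{\bf con}$ iff $a\sqsubseteq b^\dag$, and $a\sqcup b\in{\bf tot}$ iff $a\sqsupseteq b^\dag$, so ${\bf con}$ and ${\bf tot}$ are matched exactly. Thus $\mathcal D\cong\lambda(D_+)$.

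The key lemma is the delicate point: it is the only place where the (con--tot) axiom does real work.
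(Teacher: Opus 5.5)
The paper does not prove this proposition itself. It is quoted from \cite[Proposition 4.17]{YZ2026}, so there is no in-paper argument to compare your route against.

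Judged on its own, your proof is correct and complete. The logic order of $L\times L^{\rm op}$ is indeed the componentwise order of $L\times L$, and ${\bf con}\cap{\bf tot}$ is the diagonal. Full faithfulness follows exactly from the splitting $g=g_+\times g_-$ together with preservation of the diagonal. Your key lemma is correctly derived from (${\bf con}$--${\bf tot}$) and the monotonicity of the two predicates.

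Two steps you state without detail are easy to supply:
\begin{enumerate}
\item The matching $a\sqcup b\in{\bf con}\iff a\sqsubseteq b^\dag$ combines your lemma with the fact that $\dag$ is an order-reversing involution, so $b\sqsubseteq a^\dag\iff a\sqsubseteq b^\dag$.
\item The map $a\sqcup b\mapsto(a,b^\dag)$ sends ${t\!t}\mapsto({t\!t},{t\!t})$ and ${f\!\!f}\mapsto(0,0)$, because $0^\dag={t\!t}$ and ${f\!\!f}^\dag=0$. Both identities follow from uniqueness of d-complements and ${t\!t},{f\!\!f}\in{\bf con}\cap{\bf tot}$.
\end{enumerate}

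Your direct algebraic route also exhibits an explicit quasi-inverse $\mathcal{D}\mapsto D_+$, with no appeal to spectra or prime filters.

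One caveat concerns the statement's conventions, not your argument. For the one-element lattice, $\lambda(L)$ has ${t\!t}={f\!\!f}=0=1$, which the paper's standing assumption $\{{t\!t},{f\!\!f}\}\neq\{1,0\}$ excludes. So, strictly, the equivalence is between nontrivial distributive lattices and d-Boolean algebras, unless one drops that assumption.
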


\begin{defn}
A \emph{d-frame} is a d-lattice $(L;{t\!t},{f\!\!f};{\bf con},{\bf tot})$ such that $L$ is a frame and the consistency predicate ${\bf con}$ is Scott closed in $(L,\sqsubseteq)$.
\end{defn}

In the literature \cite{JM2006,JM2008}, a d-frame is often defined merely as a d-lattice whose underlying lattice is a frame. However, following \cite{Jakl,JJP}, we reserve the term \emph{d-frame} for those d-lattices for which $L$ is a frame and ${\bf con}$ is Scott closed under the information order.

Let $\mathcal{L}=(L;{t\!t},{f\!\!f};{\bf con},{\bf tot})$ and $\mathcal{M}=(M;{t\!t}_M,{f\!\!f}_M;{\bf con}_M,{\bf tot}_M)$ be d-frames. A \emph{d-frame homomorphism}
\[
g\colon \mathcal{L}\to \mathcal{M}
\]
is a frame homomorphism $g\colon L\to M$ that preserves ${t\!t}$, ${f\!\!f}$, ${\bf con}$, and ${\bf tot}$ \cite{JM2006,JM2008}. The category of d-frames and d-frame homomorphisms is denoted by ${\bf dFrm}$.

As in the classical setting, there is a natural adjunction between ${\bf BiTop}$ and ${\bf dFrm}^{\rm op}$. For each bitopological space $(X,\tau_+,\tau_-)$, the structure
\[
\mathrm{d}\mathcal{O}(X,\tau_+,\tau_-)\coloneqq (L;{t\!t},{f\!\!f};{\bf con},{\bf tot})
\]
is a d-frame, where
\begin{itemize} \setlength{\itemsep}{0pt}
\item $L=\tau_+\times\tau_-$;
\item ${t\!t}=(X,\varnothing)$, ${f\!\!f}=(\varnothing,X)$;
\item ${\bf con}=\{(U,V)\in \tau_+\times\tau_- : U\cap V=\varnothing\}$;
\item ${\bf tot}=\{(U,V)\in \tau_+\times\tau_- : U\cup V=X\}$.
\end{itemize}
This d-frame is called the \emph{d-frame of open sets} of $(X,\tau_+,\tau_-)$. The assignment gives a contravariant functor
\[
\mathrm{d}\mathcal{O}\colon {\bf BiTop}\to  {\bf dFrm}^{\rm op}.
\]

The functor $\mathrm{d}\mathcal{O}$ has a right adjoint
\[
\operatorname{dpt} \colon {\bf dFrm}^{\rm op}\to  {\bf BiTop},
\]
which sends each d-frame $\mathcal{L}$ to the bitopological space of its d-points \cite{JM2006,JM2008}.

As in the classical setting, a bitopological space $(X,\tau_+,\tau_-)$ is d-sober if and only if it is homeomorphic to $\operatorname{dpt}\circ\mathrm{d}\mathcal{O}(X,\tau_+,\tau_-)$. A d-frame $\mathcal{L}$ is said to be \emph{spatial} if it is isomorphic to the d-frame of open sets of some bitopological space; equivalently, it is isomorphic to $\mathrm{d}\mathcal{O}\circ\operatorname{dpt}(\mathcal{L})$. Consequently, the adjunction $\mathrm{d}\mathcal{O}\dashv\operatorname{dpt}$ restricts to a dual equivalence between the category of d-sober bitopological spaces and the category of spatial d-frames; see \cite{JM2006,JM2008} for details.

\begin{defn}
Let $\mathcal{L}=(L;{t\!t},{f\!\!f};{\bf con},{\bf tot})$ be a d-frame.
\begin{enumerate}[label=\rm(\roman*)] \setlength{\itemsep}{0pt}
\item (\cite[Definition 7.5]{JM2008}) $\mathcal{L}$ is \emph{compact} if ${\bf tot}$ is a Scott open subset of $(L,\sqsubseteq)$.
\item (\cite[Definition 2.3.6]{Jakl}) $\mathcal{L}$ is \emph{d-zero-dimensional} if every element of $L$ is a join of d-complemented elements.
\end{enumerate}
\end{defn}

\begin{exmp} \label{KZ of dO} (\cite[page 19]{Jakl})
For a bitopological space $(X,\tau_+,\tau_-)$, the d-frame $\mathrm{d}\mathcal{O}(X,\tau_+,\tau_-)$ is compact and d-zero-dimensional if and only if $(X,\tau_+,\tau_-)$ is compact and zero-dimensional in the sense of Definition \ref{compact bitop}.
\end{exmp}

For a d-lattice $\mathcal{L}=(L;{t\!t},{f\!\!f};{\bf con},{\bf tot})$, define
\[
B_+=\{a\in L_+ : a \text{ is d-complemented}\},\quad
B_-=\{b\in L_- : b \text{ is d-complemented}\}.
\]
Then $B_+$ is a sublattice of $L_+$, and $B_-$ is a sublattice of $L_-$. Let
\[
\operatorname{dB} L=\{a\sqcup b : a\in B_+,\; b\in B_-\}.
\]
Then $\operatorname{dB} L$ is a sublattice of $L$ containing ${t\!t}$ and ${f\!\!f}$. The structure
\[
\operatorname{dB}\mathcal{L}\coloneqq (\operatorname{dB} L;{t\!t},{f\!\!f};{\bf con}_{\operatorname{dB}},{\bf tot}_{\operatorname{dB}}),
\]
where
\[
{\bf con}_{\operatorname{dB}}={\bf con}\cap \operatorname{dB} L,\quad {\bf tot}_{\operatorname{dB}}={\bf tot}\cap \operatorname{dB} L,
\]
is a d-Boolean algebra. This gives a functor
\[
\operatorname{dB}\colon {\bf dLat}\to  {\bf dBA}
\]
that is right adjoint to the inclusion ${\bf dBA}\to {\bf dLat}$; see \cite[Proposition 4.18]{YZ2026}. Hence, ${\bf dBA}$ is a coreflective subcategory of {\bf dLat}.  

Now let $(X,\tau_+,\tau_-)$ be a bitopological space. Define
\[
L_+=\{U\in\tau_+ : X\setminus U\in\tau_-\},\quad
L_-=\{V\in\tau_- : X\setminus V\in\tau_+\}.
\]
Equivalently, $L_+$ consists of the $\tau_+$-open and $\tau_-$-closed subsets of $X$, and $L_-$ of the $\tau_-$-open and $\tau_+$-closed subsets. Then the structure
\[
(L;{t\!t},{f\!\!f};{\bf con},{\bf tot})
\]
is a d-Boolean algebra, where
\begin{itemize} \setlength{\itemsep}{0pt}
\item $L=L_+\times L_-$;
\item ${t\!t}=(X,\varnothing)$, ${f\!\!f}=(\varnothing,X)$;
\item ${\bf con}=\{(U,V)\in L_+\times L_- : U\cap V=\varnothing\}$;
\item ${\bf tot}=\{(U,V)\in L_+\times L_- : U\cup V=X\}$.
\end{itemize}
This d-Boolean algebra is called the \emph{d-Boolean algebra of d-clopen sets} of $(X,\tau_+,\tau_-)$. The assignment yields a contravariant functor
\[
\operatorname{dClop}\colon {\bf BiTop}\to  {\bf dBA}^{\rm op}.
\]
It is clear that $\operatorname{dClop} = \operatorname{dB}\circ\mathrm{d}\mathcal{O}$.

For a d-lattice $\mathcal{L}=(L;{t\!t},{f\!\!f};{\bf con},{\bf tot})$, define
\[
{\bf con}_{\operatorname{Idl}}=\{I\in\operatorname{Idl} L : I\subseteq {\bf con}\},\quad
{\bf tot}_{\operatorname{Idl}}=\{I\in\operatorname{Idl} L : I\cap {\bf tot}\neq\varnothing\}.
\]
Then
\[
\operatorname{Idl}\mathcal{L}\coloneqq (\operatorname{Idl} L;\downarrow\!{t\!t},\downarrow\!{f\!\!f};{\bf con}_{\operatorname{Idl}},{\bf tot}_{\operatorname{Idl}})
\]
is a d-frame, called the \emph{d-frame of ideals} of $\mathcal{L}$. If $f\colon\mathcal{L}\to\mathcal{M}$ is a d-lattice homomorphism, then the map
\[
\operatorname{Idl} L \to \operatorname{Idl} M,\quad I \mapsto f(I)=\{b\in M : b\sqsubseteq f(a)\text{ for some }a\in I\}
\]
is a d-frame homomorphism $\operatorname{Idl}\mathcal{L}\to\operatorname{Idl}\mathcal{M}$. Thus we obtain a functor
\[
\operatorname{Idl}\colon {\bf dLat}\to  {\bf dFrm},
\]
which is left adjoint to the forgetful functor ${\bf dFrm}\to{\bf dLat}$; see \cite[Proposition 4.7]{YZ2026}.

\begin{prop}{\rm(\cite[Proposition 4.21]{YZ2026})} \label{the d-Boolean algebra coreflection of Idl}
Every d-Boolean algebra $\mathcal{L}=(L;{t\!t},{f\!\!f};{\bf con},{\bf tot})$ is isomorphic to $\operatorname{dB}\circ\operatorname{Idl}\mathcal{L}$.
\end{prop}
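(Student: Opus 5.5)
The plan is to show that the canonical map
\[
\eta\colon L\to \operatorname{Idl} L,\qquad x\longmapsto \downarrow\! x,
\]
is a d-lattice homomorphism whose image is exactly $\operatorname{dB}\operatorname{Idl} L$, and which both preserves and reflects ${\bf con}$ and ${\bf tot}$. This makes $\eta$ an isomorphism $\mathcal{L}\cong \operatorname{dB}\circ\operatorname{Idl}\mathcal{L}$.

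\emph{Step 1: $\eta$ is an embedding that preserves and reflects the predicates.}
\begin{itemize}
\item $\eta$ is an injective lattice homomorphism, since $\downarrow\!(x\sqcup y)=\downarrow\! x\sqcup\downarrow\! y$ in $\operatorname{Idl} L$ and $\downarrow\!(x\sqcap y)=\downarrow\! x\cap\downarrow\! y$.
\item It sends ${t\!t},{f\!\!f}$ to $\downarrow\!{t\!t},\downarrow\!{f\!\!f}$.
\item Because ${\bf con}$ is a lower set, $\downarrow\! x\subseteq{\bf con}$ if and only if $x\in{\bf con}$.
\item Because ${\bf tot}$ is an upper set, $\downarrow\! x\cap{\bf tot}\neq\varnothing$ if and only if $x\in{\bf tot}$.
\end{itemize}
So ${\bf con}_{\operatorname{Idl}}$ and ${\bf tot}_{\operatorname{Idl}}$ restrict on the image of $\eta$ exactly to the images of ${\bf con}$ and ${\bf tot}$.

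\emph{Step 2: the image of $\eta$ lies in $\operatorname{dB}\operatorname{Idl} L$.} Take $a\in L_+$ with d-complement $a^\dag\in L_-$. Then $\downarrow\! a$ lies in $(\operatorname{Idl} L)_+=\downarrow\!(\downarrow\!{t\!t})$, and $\downarrow\! a^\dag$ lies in $(\operatorname{Idl} L)_-$. Moreover
\[
\downarrow\! a\sqcup\downarrow\! a^\dag=\downarrow\!(a\sqcup a^\dag)\in{\bf con}_{\operatorname{Idl}}\cap{\bf tot}_{\operatorname{Idl}}
\]
by Step 1, so $\downarrow\! a$ is d-complemented. The same argument applies to elements of $L_-$. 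Since every $x\in L$ satisfies
\[
\downarrow\! x=\downarrow\!(x\sqcap{t\!t})\sqcup\downarrow\!(x\sqcap{f\!\!f}),
\]
every principal ideal lies in $\operatorname{dB}\operatorname{Idl} L$.

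\emph{Step 3: every element of $\operatorname{dB}\operatorname{Idl} L$ is principal.} This is the main point. Let $I\subseteq\downarrow\!{t\!t}$ and $J\subseteq\downarrow\!{f\!\!f}$ be ideals with $I\sqcup J\in{\bf con}_{\operatorname{Idl}}\cap{\bf tot}_{\operatorname{Idl}}$. The join $I\sqcup J$ in $\operatorname{Idl} L$ is $\{x: x\sqsubseteq i\sqcup j\text{ for some } i\in I,\ j\in J\}$.
\begin{enumerate}
\item Since ${\bf tot}$ is an upper set and $I\sqcup J$ meets ${\bf tot}$, there are $i\in I$ and $j\in J$ with $\beta=i\sqcup j\in{\bf tot}$.
\item Let $i'\in I$ be arbitrary, and set $\alpha=(i\sqcup i')\sqcup j$. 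Then $\alpha\in I\sqcup J\subseteq{\bf con}$.
\item Because $I\subseteq L_+$ and $J\subseteq L_-$, we have $\alpha\sqcap{f\!\!f}=j=\beta\sqcap{f\!\!f}$.
\item The (${\bf con}$--${\bf tot}$) axiom now gives $\alpha\sqsubseteq\beta$.
\item Meeting both sides with ${t\!t}$ yields $i'\sqsubseteq i$.
\end{enumerate}
Hence $I=\downarrow\! i$. Symmetrically, using the ${t\!t}$-components, $J=\downarrow\! j$. Thus $B_+$ and $B_-$ for $\operatorname{Idl}\mathcal{L}$ consist only of principal ideals, and so every element of $\operatorname{dB}\operatorname{Idl} L$, being of the form $I\sqcup J=\downarrow\!(i\sqcup j)$, lies in the image of $\eta$.

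Combining Steps 1–3, $\eta$ is a bijective d-lattice homomorphism onto $\operatorname{dB}\operatorname{Idl}\mathcal{L}$ that reflects ${\bf con}$ and ${\bf tot}$, hence a d-lattice isomorphism. The only delicate step is Step 3. The key there is to choose $\alpha$ and $\beta$ sharing the same ${f\!\!f}$-component, so that the (${\bf con}$--${\bf tot}$) axiom applies.
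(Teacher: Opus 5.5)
Your proof is correct and complete. Steps 1 and 2 correctly use that ${\bf con}$ is a lower set and ${\bf tot}$ is an upper set. Step 3 carries the real content. Choosing $\alpha=(i\sqcup i')\sqcup j\in{\bf con}$ and $\beta=i\sqcup j\in{\bf tot}$, which satisfy $\alpha\sqcap{f\!\!f}=\beta\sqcap{f\!\!f}=j$, legitimately triggers the (${\bf con}$--${\bf tot}$) axiom. This gives $i\sqcup i'\sqsubseteq i$ and hence $I=\downarrow\! i$. The symmetric argument gives $J=\downarrow\! j$.

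The paper does not prove this statement; it only imports it from \cite[Proposition 4.21]{YZ2026}. So there is no in-text argument to compare yours against.

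Two remarks on what your route buys.

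First, it is the exact d-analogue of the classical proof that complemented ideals of a distributive lattice are principal. There, $i'=i'\sqcap(i\sqcup j)=i'\sqcap i$ because $i'\sqcap j\in I\cap J=\{0\}$; in your argument the (${\bf con}$--${\bf tot}$) axiom replaces that computation. The same argument can also be run at the ideal level. ${\bf tot}_{\operatorname{Idl}}$ is Scott open in $\operatorname{Idl} L$, so from $I\sqcup J=\bigsqcup_{i\in I}(\downarrow\! i\sqcup J)\in{\bf tot}_{\operatorname{Idl}}$ some $\downarrow\! i\sqcup J$ is already total. Then (${\bf con}$--${\bf tot}$) in $\operatorname{Idl}\mathcal{L}$ forces $I\subseteq\downarrow\! i$.

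Second, Step 3 never uses that $\mathcal{L}$ is d-Boolean. Combined with the converse in Step 2, it shows that the d-complemented elements of $\operatorname{Idl}\mathcal{L}$ are exactly the $\downarrow\! x$ with $x$ d-complemented in $\mathcal{L}$. Hence $\operatorname{dB}\circ\operatorname{Idl}\mathcal{L}\cong\operatorname{dB}\mathcal{L}$ for every d-lattice. d-Booleanness enters only to identify $\operatorname{dB}\mathcal{L}$ with $\mathcal{L}$.
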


\begin{prop}{\rm(\cite[Proposition 4.23]{YZ2026})}
A d-frame $\mathcal{L}=(L;{t\!t},{f\!\!f};{\bf con},{\bf tot})$ is compact and d-zero-dimensional if and only if $\mathcal{L}$ is isomorphic to $\operatorname{Idl}\circ\operatorname{dB}\mathcal{L}$.
\end{prop}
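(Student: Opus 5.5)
This is a characterization of compact, d-zero-dimensional d-frames. I would prove the two implications separately, writing $\mathcal{B}=\operatorname{dB}\mathcal{L}$ and using the counit-like map
\[
\varepsilon\colon \operatorname{Idl}\mathcal{B}\to\mathcal{L},\qquad I\mapsto \bigsqcup I .
\]
Since the inclusion $\mathcal{B}\to\mathcal{L}$ is a d-lattice homomorphism and $\operatorname{Idl}\dashv$ forget, $\varepsilon$ is the adjoint transpose of this inclusion, hence a d-frame homomorphism. For sufficiency (``if''), I would observe that $\operatorname{Idl}$ of any d-lattice is compact and d-zero-dimensional, and transport this along the isomorphism. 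For compactness, ${\bf tot}_{\operatorname{Idl}}=\{I: I\cap{\bf tot}\ne\varnothing\}$ is Scott open in $\operatorname{Idl}$: a directed join of ideals is their union, so if it meets ${\bf tot}$ then some member does. For d-zero-dimensionality, each ideal is the join of the principal ideals $\downarrow(a\sqcup b)=\downarrow a\sqcup\downarrow b$ with $a\in B_+$, $b\in B_-$. Then $\downarrow a$ is d-complemented in $\operatorname{Idl}\mathcal{B}$ with complement $\downarrow a^\dag$, because $\downarrow(a\sqcup a^\dag)$ lies in ${\bf con}_{\operatorname{Idl}}$ (as ${\bf con}$ is a lower set) and in ${\bf tot}_{\operatorname{Idl}}$.

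For necessity, I would show that $\varepsilon$ is bijective; a bijective d-frame homomorphism also needs to reflect ${\bf con}$ and ${\bf tot}$, so I would check that as well. Surjectivity is exactly d-zero-dimensionality: every $x$ is a join of d-complemented elements, and each d-complemented element lies in $L_+$ or $L_-$, hence in $\operatorname{dB}L$. For injectivity, take ideals $I,J$ of $\operatorname{dB}L$ with $\bigsqcup I\sqsubseteq\bigsqcup J$ and $c=a\sqcup b\in I$. The key lemma is that each d-complemented $a\in B_+$ is finite in $(L,\sqsubseteq)$ when $\mathcal{L}$ is compact. To prove it, suppose $a\sqsubseteq\bigsqcup D$ with $D$ directed. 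Then $a^\dag\sqcup\bigsqcup D$ is above $a\sqcup a^\dag\in{\bf tot}$, hence in ${\bf tot}$ since ${\bf tot}$ is an upper set. Scott openness gives some $d\in D$ with $a^\dag\sqcup d\in{\bf tot}$. Now use (con--tot) together with the lattice properties of ${\bf con},{\bf tot}$ in the logic order to get $a\sqsubseteq d$. One natural route is to form $(a\sqcup a^\dag)\wedge(a^\dag\sqcup d)$ or a similar combination and compare $+$-parts, the aim being to show $a\sqsubseteq d\sqcap{t\!t}$. Granting the lemma, $c$ is finite, so $c\sqsubseteq\bigsqcup J$ gives $c\sqsubseteq j$ for some $j\in J$, and therefore $I\subseteq J$. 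Thus $\varepsilon$ is an order embedding, hence injective.

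It remains to show that $\varepsilon$ reflects the predicates. For ${\bf tot}$: if $\bigsqcup I\in{\bf tot}$, then Scott openness gives some finite join of elements of $I$, i.e.\ some element of $I$, lying in ${\bf tot}$. For ${\bf con}$: if $\bigsqcup I\in{\bf con}$, then every element of $I$ lies below it, and ${\bf con}$ is a lower set. Hence $\varepsilon$ is an isomorphism $\operatorname{Idl}\circ\operatorname{dB}\mathcal{L}\cong\mathcal{L}$.

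The main obstacle is the finiteness lemma. Its (con--tot) manipulation is the delicate step: showing $a\sqsubseteq d$ requires carefully separating the $+$ and $-$ components. I would argue with the product presentation, where $a=(a,0)$, $a^\dag=(0,a^\dag)$ and $d=(d_+,d_-)$. The goal is to show $(a\sqcap d_+)\sqcup a^\dag$, or a similar element, is in ${\bf tot}$, then apply (con--tot) against $a\sqcup a^\dag\in{\bf con}$, whose $-$-component agrees, to obtain $a\sqsubseteq a\sqcap d_+$.
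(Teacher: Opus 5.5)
You assess this proposition on its own terms here, since the paper cites it from earlier work and gives no proof. Your architecture is sound. The ``if'' direction rests on $\operatorname{Idl}\mathcal{B}$ being compact and d-zero-dimensional. Your opening claim that this holds for $\operatorname{Idl}$ of \emph{any} d-lattice is false for d-zero-dimensionality, but the argument you actually give uses $a\in B_+$ and $a^\dag\in B_-$, so it only needs $\mathcal{B}=\operatorname{dB}\mathcal{L}$ to be d-Boolean, and there it is correct. The ``only if'' direction via $\varepsilon\colon I\mapsto\bigsqcup I$ is also fine: surjectivity, reflection of ${\bf con}$ and ${\bf tot}$, and injectivity from finiteness of the elements of $\operatorname{dB}L$ are all handled correctly. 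The genuine gap is in the proof of the finiteness lemma, which is the one nontrivial step.

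\textbf{The false step.} After Scott-openness gives you some $d\in D$ with $a^\dag\sqcup d\in{\bf tot}$, you plan to derive $a\sqsubseteq d$. That implication is false, so no use of (con--tot) or of the logic-order operations can produce it. The $-$-component $d_-$ of the witness may be large enough to make $a^\dag\sqcup d\in{\bf tot}$ hold trivially. In particular, $(a\sqcap d_+)\sqcup a^\dag\in{\bf tot}$ does not follow from $(d_+,a^\dag\sqcup d_-)\in{\bf tot}$.

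\textbf{Counterexample.} Take the d-frame $\mathbb{B}$, which is compact and d-Boolean. Let $a={t\!t}$, so $a^\dag=0$, and let $D=\{{f\!\!f},1\}$. Then $a\sqsubseteq\bigsqcup D=1$, and $a^\dag\sqcup{f\!\!f}={f\!\!f}\in{\bf tot}$. So $d={f\!\!f}$ is a legitimate Scott-openness witness, yet ${t\!t}\not\sqsubseteq{f\!\!f}$. Note also that $(a\sqcap d_+)\sqcup a^\dag=0\notin{\bf tot}$ here.

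\textbf{The fix.} You must separate components \emph{before} invoking Scott-openness, not after.
\begin{enumerate}
\item Since $a\sqsubseteq{t\!t}$, frame distributivity gives $a\sqsubseteq(\bigsqcup D)\sqcap{t\!t}=\bigsqcup_{d\in D}(d\sqcap{t\!t})$.
\item Hence the directed family $\{(d\sqcap{t\!t})\sqcup a^\dag : d\in D\}$ has join above $a\sqcup a^\dag\in{\bf tot}$.
\item Scott-openness of ${\bf tot}$ then yields $d\in D$ with $\beta=(d\sqcap{t\!t})\sqcup a^\dag\in{\bf tot}$.
\item Put $\alpha=a\sqcup a^\dag\in{\bf con}$. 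Then $\alpha\sqcap{f\!\!f}=a^\dag=\beta\sqcap{f\!\!f}$, so (con--tot) gives $\alpha\sqsubseteq\beta$.
\item Therefore $a\sqsubseteq d\sqcap{t\!t}\sqsubseteq d$.
\end{enumerate}
No logic-order meets are needed. With this repair, and its mirror image for $B_-$, the rest of your argument goes through.
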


The two propositions above imply that the category of d-Boolean algebras is equivalent to the category of compact and d-zero-dimensional d-frames; see \cite[Theorem 4.20]{YZ2026}.

\section{d-spectral spaces are spectra of d-lattices}\label{Spectra}

Let $\mathcal{L}=(L;{t\!t},{f\!\!f};{\bf con},{\bf tot})$ be a d-lattice. A \emph{d-filter} of $\mathcal{L}$ is a map $f\colon L\to \mathbb{B}$ such that
\begin{enumerate}[label=\rm(\roman*)] \setlength{\itemsep}{0pt}
\item $f({t\!t})\sqsupseteq {t\!t}$ and $f({f\!\!f})\sqsupseteq {f\!\!f}$;
\item $f$ preserves ${\bf tot}$ and finite meets; that is,
\[
f({\bf tot})\subseteq \{1,{t\!t},{f\!\!f}\}
\quad\text{and}\quad
f(x\sqcap y)=f(x)\sqcap f(y)\quad\text{for all }x,y\in L.\]
\end{enumerate}

A d-filter $f$ is said to be \emph{proper} if $f(0)=0$. Furthermore, a proper d-filter $f$ is \emph{prime} if it also preserves ${\bf con}$ and finite joins; that is,
\[
f({\bf con})\subseteq \{0,{t\!t},{f\!\!f}\}
\quad\text{and}\quad
f(x\sqcup y)=f(x)\sqcup f(y)\quad\text{for all }x,y\in L.
\]

A prime d-filter of $\mathcal{L}$ is exactly a d-lattice homomorphism $f\colon \mathcal{L}\to \mathbb{B}$. The following characterization of prime d-filters of d-Boolean algebras is useful.

\begin{lem}\label{prime d-filter} {\rm(\cite[Lemma 5.12]{YZ2026})}
Let $f\colon L\to\mathbb{B}$ be a proper d-filter of a d-Boolean algebra $(L;{t\!t},{f\!\!f};{\bf con},{\bf tot})$. Then $f$ is prime if and only if it satisfies:
\begin{enumerate}[label=\rm(\roman*)] \setlength{\itemsep}{0pt}
\item for all $a\in L_+$, $f(a)={t\!t} \iff f(a^\dag)=0$;
\item for all $b\in L_-$, $f(b)={f\!\!f} \iff f(b^\dag)=0$.
\end{enumerate}
\end{lem}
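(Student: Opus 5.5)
The plan is to first pin down the values of $f$ on $L_+$ and $L_-$, then prove necessity directly from the d-complement condition, and finally obtain preservation of joins and of ${\bf con}$ from (i) and (ii).

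\textbf{Preliminary normalisation.} Since $f$ preserves finite meets, it is monotone with respect to $\sqsubseteq$. Because $f$ is proper,
\[
f({t\!t})\sqcap f({f\!\!f})=f({t\!t}\sqcap{f\!\!f})=f(0)=0.
\]
Together with $f({t\!t})\sqsupseteq{t\!t}$ and $f({f\!\!f})\sqsupseteq{f\!\!f}$, this forces $f({t\!t})={t\!t}$ and $f({f\!\!f})={f\!\!f}$. For instance, $f({t\!t})=1$ would give $f({f\!\!f})=1\sqcap f({f\!\!f})=0$, which is impossible. By monotonicity, $f(a)\in\{0,{t\!t}\}$ for $a\in L_+$ and $f(b)\in\{0,{f\!\!f}\}$ for $b\in L_-$. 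Moreover, for any $x\in L$ we have $f(x)\sqcap{t\!t}=f(x\sqcap{t\!t})$ and $f(x)\sqcap{f\!\!f}=f(x\sqcap{f\!\!f})$. Since every element $c$ of $\mathbb{B}$ satisfies $c=(c\sqcap{t\!t})\sqcup(c\sqcap{f\!\!f})$, it follows that
\[
f(x)=f(x\sqcap{t\!t})\sqcup f(x\sqcap{f\!\!f}).
\]
So $f$ is determined by its restrictions to $L_+$ and $L_-$.

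\textbf{Necessity.} Assume $f$ is prime and let $a\in L_+$. Then $a\sqcup a^\dag\in{\bf con}\cap{\bf tot}$, so $f(a)\sqcup f(a^\dag)$ lies in $\{0,{t\!t},{f\!\!f}\}\cap\{1,{t\!t},{f\!\!f}\}=\{{t\!t},{f\!\!f}\}$. Since $f(a)\in\{0,{t\!t}\}$ and $f(a^\dag)\in\{0,{f\!\!f}\}$, exactly one of them is nonzero. This is precisely (i), and (ii) follows in the same way.

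\textbf{Sufficiency: joins.} Assume (i) and (ii). It suffices to show that $f$ preserves binary joins on $L_+$ and on $L_-$. The displayed decomposition then gives
\[
f(x\sqcup y)=f(x_+\sqcup y_+)\sqcup f(x_-\sqcup y_-)=f(x)\sqcup f(y),
\]
where $x_\pm,y_\pm$ are the components of $x,y$.

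\begin{itemize}
\item Take $a_1,a_2\in L_+$. Monotonicity gives one direction. For the other, suppose $f(a_1)=f(a_2)=0$. By (i), $f(a_i^\dag)={f\!\!f}$, so $f(a_1^\dag\sqcap a_2^\dag)={f\!\!f}$.
\item The key point is that $(a_1\sqcup a_2)^\dag=a_1^\dag\sqcap a_2^\dag$. Indeed, the logic-order join of $a_1\sqcup a_1^\dag$ and $a_2\sqcup a_2^\dag$ is $(a_1\sqcup a_2)\sqcup(a_1^\dag\sqcap a_2^\dag)$. Both ${\bf con}$ and ${\bf tot}$ are closed under logic-order joins, so this element lies in ${\bf con}\cap{\bf tot}$, and uniqueness of d-complements gives the identity.
\item Hence $f\bigl((a_1\sqcup a_2)^\dag\bigr)\ne0$, so (i) forces $f(a_1\sqcup a_2)\neq{t\!t}$, that is, $f(a_1\sqcup a_2)=0$.
\end{itemize}

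The case of $L_-$ is symmetric, using logic-order meets.

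\textbf{Sufficiency: consistency.} Let $x=a\sqcup b\in{\bf con}$ with $a\in L_+$ and $b\in L_-$. We must exclude $f(x)=1$, that is, exclude $f(a)={t\!t}$ and $f(b)={f\!\!f}$ holding together.

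\begin{itemize}
\item Suppose both hold. Then $f(b^\dag)=0$ by (ii).
\item Put $\alpha=a\sqcup b\in{\bf con}$ and $\beta=b^\dag\sqcup b\in{\bf tot}$. They satisfy $\alpha\sqcap{f\!\!f}=b=\beta\sqcap{f\!\!f}$.
\item The ({\bf con}--{\bf tot}) axiom therefore yields $\alpha\sqsubseteq\beta$, hence $a\sqsubseteq b^\dag$.
\item Monotonicity then gives ${t\!t}=f(a)\sqsubseteq f(b^\dag)=0$, a contradiction.
\end{itemize}

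Preservation of ${\bf tot}$ and properness are part of the hypotheses, so $f$ is prime.

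I expect the main obstacle to be the join step, specifically the identity $(a_1\sqcup a_2)^\dag=a_1^\dag\sqcap a_2^\dag$. It relies on ${\bf con}$ and ${\bf tot}$ being sublattices under the logic order, which is where the d-lattice axioms are genuinely used.
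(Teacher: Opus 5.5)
Your proof is correct and complete. The normalisation $f({t\!t})={t\!t}$, $f({f\!\!f})={f\!\!f}$ and the splitting $f(x)=f(x\sqcap{t\!t})\sqcup f(x\sqcap{f\!\!f})$ validly reduce join-preservation to joins inside $L_+$ and inside $L_-$. The identity $(a_1\sqcup a_2)^\dag=a_1^\dag\sqcap a_2^\dag$ holds exactly as you argue: ${\bf con}$ and ${\bf tot}$ are closed under logic-order joins, and d-complements are unique (which itself follows from the (${\bf con}$--${\bf tot}$) axiom). The consistency step is also a correct use of (${\bf con}$--${\bf tot}$), applied to $\alpha=a\sqcup b$ and $\beta=b^\dag\sqcup b$.

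There is no in-paper argument to compare with: the paper imports this lemma from \cite{YZ2026} and gives no proof. Your argument is therefore a self-contained verification that uses only the d-lattice axioms and the existence of d-complements.

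Two minor remarks. First, necessity does not actually need join-preservation, because your splitting already gives $f(a\sqcup a^\dag)=f(a)\sqcup f(a^\dag)$ for any proper d-filter. Second, in the consistency step you could equally use $a^\dag$ and condition (i). Comparing $\alpha$ with $a\sqcup a^\dag\in{\bf tot}$ on the ${t\!t}$-component gives $b\sqsubseteq a^\dag$, so $f(a)={t\!t}$ forces $f(b)=0$.
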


We write
\[
\operatorname{dSpec}\mathcal{L}
\]
for the set of all prime d-filters of $\mathcal{L}$, and we endow $\operatorname{dSpec}\mathcal{L}$ with a bitopological structure by taking as $\tau_+$ the topology generated by the collection
\[
\phi_+(a)=\{g\in\operatorname{dSpec}\mathcal{L}: g(a)={t\!t}\},\quad a\in L_+,
\]
and as $\tau_-$ the topology generated by
\[
\phi_-(b)=\{g\in\operatorname{dSpec}\mathcal{L}: g(b)={f\!\!f}\},\quad b\in L_-.
\]
The bitopological space
\[
(\operatorname{dSpec}\mathcal{L},\tau_+,\tau_-)
\]
is called the \emph{spectrum} of the d-lattice $\mathcal{L}$. It is readily verified that every open set of $\tau_+$ is of the form
\[
\phi_+(I_+)\coloneqq \{g\in\operatorname{dSpec}\mathcal{L}: \exists a\in I_+,\; g(a)={t\!t}\}
\]
for some ideal $I_+$ of the distributive lattice $L_+$; analogously for $\tau_-$.

\begin{prop}{\rm(\cite[Proposition 5.15]{YZ2026})} \label{spectrum as composite}
The spectrum of a d-lattice $\mathcal{L}=(L;{t\!t},{f\!\!f};{\bf con},{\bf tot})$ is precisely the bitopological space of d-points of the d-frame of ideals of $\mathcal{L}$; that is,
\[
\operatorname{dSpec}\mathcal{L} = \operatorname{dpt} \circ\operatorname{Idl}\mathcal{L}.
\]
In particular, $(\operatorname{dSpec}\mathcal{L},\tau_+,\tau_-)$ is d-sober.
\end{prop}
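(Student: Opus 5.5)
The plan is to identify both sides of the claimed equality at the level of points, and then at the level of open sets. I take the usual Jung--Moshier description of d-points: a d-point of a d-frame $\mathcal{M}$ is a d-frame homomorphism $p\colon\mathcal{M}\to\mathbb{B}$. The $\tau_+$-open sets of $\operatorname{dpt}\mathcal{M}$ are the sets $\{p : p(m)={t\!t}\}$ with $m\in M_+$, and dually for $\tau_-$ with ${f\!\!f}$.

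First I check that $\mathbb{B}$, with ${\bf con}=\{0,{t\!t},{f\!\!f}\}$ and ${\bf tot}=\{1,{t\!t},{f\!\!f}\}$, is a d-frame. It is a finite distributive lattice, hence a frame. Every subset of a finite poset is Scott closed exactly when it is a lower set, and ${\bf con}$ is a lower set in $\sqsubseteq$. So $\mathbb{B}$ is a d-frame.

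Next I invoke the adjunction $\operatorname{Idl}\dashv U$, where $U\colon{\bf dFrm}\to{\bf dLat}$ is the forgetful functor (\cite[Proposition 4.7]{YZ2026}). Its unit is the principal-ideal embedding $a\mapsto\downarrow\! a$. The adjunction gives a natural bijection
\[
{\bf dFrm}(\operatorname{Idl}\mathcal{L},\mathbb{B})\cong{\bf dLat}(\mathcal{L},U\mathbb{B}).
\]
It sends a d-point $p$ to $g=p(\downarrow\!-)$. Conversely it sends a d-lattice homomorphism $g$ to $p(I)=\bigsqcup_{a\in I}g(a)$. Since d-lattice homomorphisms $\mathcal{L}\to\mathbb{B}$ are exactly the prime d-filters, this identifies the underlying set of $\operatorname{dpt}\circ\operatorname{Idl}\mathcal{L}$ with $\operatorname{dSpec}\mathcal{L}$. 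If one prefers not to rely on the adjunction, I would verify these points directly, and this is the step needing most care:
\begin{itemize}
\item $p(I)=\bigsqcup_{a\in I}g(a)$ preserves finite meets, by distributivity in $\mathbb{B}$ together with $I\cap J=\{a\sqcap b : a\in I,\ b\in J\}$;
\item $p$ preserves arbitrary joins, since a join of ideals is generated by finite joins of their elements;
\item $p$ preserves ${\bf con}_{\operatorname{Idl}}$ and ${\bf tot}_{\operatorname{Idl}}$: if $I\subseteq{\bf con}$ then $p(I)$ is a directed join inside the Scott-closed set ${\bf con}$, and if $I$ meets ${\bf tot}$ then $p(I)$ lies above some element of the upper set ${\bf tot}$.
\end{itemize}

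It remains to match the topologies. The elements of $(\operatorname{Idl}L)_+=\downarrow\!(\downarrow\!{t\!t})$ are exactly the ideals $I_+$ of $L_+$. For such an $I_+$ and $g\in\operatorname{dSpec}\mathcal{L}$, each $g(a)$ with $a\in I_+$ lies in $\downarrow\!{t\!t}=\{0,{t\!t}\}$. Hence $p(I_+)=\bigsqcup_{a\in I_+}g(a)$ equals ${t\!t}$ if and only if $g(a)={t\!t}$ for some $a\in I_+$. So the $\tau_+$-open set of $\operatorname{dpt}$ determined by $I_+$ is precisely $\phi_+(I_+)$. These sets are exactly the $\tau_+$-open sets of the spectrum, as noted just before the statement. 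The same argument with ${f\!\!f}$ and ideals of $L_-$ matches $\tau_-$, which proves the equality.

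Finally, d-sobriety follows from the standard fact that $\operatorname{dpt}\mathcal{M}$ is d-sober for every d-frame $\mathcal{M}$. Indeed, $\mathrm{d}\mathcal{O}\dashv\operatorname{dpt}$ restricts to the duality between d-sober spaces and spatial d-frames, and the image of $\operatorname{dpt}$ consists of d-sober spaces \cite{JM2006,JM2008}. Apply this with $\mathcal{M}=\operatorname{Idl}\mathcal{L}$.
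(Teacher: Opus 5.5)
Your proof is correct. The points of $\operatorname{dpt}\circ\operatorname{Idl}\mathcal{L}$ are matched with prime d-filters through the adjunction $\operatorname{Idl}\dashv U$, after checking that $\mathbb{B}$ is a d-frame. The opens are matched via $(\operatorname{Idl}L)_+=\operatorname{Idl}L_+$, so that $\{p : p(I_+)={t\!t}\}$ corresponds to $\phi_+(I_+)$, and d-sobriety is the standard Jung--Moshier fact that $\operatorname{dpt}$ of any d-frame is d-sober. The paper gives no proof of its own here; it imports the result from \cite{YZ2026}. So there is nothing to contrast, and your route uses exactly the ingredients the paper relies on elsewhere, namely \cite[Proposition 4.7]{YZ2026} and the description of the $\tau_\pm$-open sets as $\phi_\pm(I_\pm)$.
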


\begin{thm}\label{functor dSpec}
For every d-lattice $\mathcal{L}=(L;{t\!t},{f\!\!f};{\bf con},{\bf tot})$, its spectrum $(\operatorname{dSpec}\mathcal{L},\tau_+,\tau_-)$ is a d-spectral bitopological space.
\end{thm}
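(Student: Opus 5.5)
The plan is to verify the three clauses of Definition \ref{d-Spectral space} separately. d-sobriety is already available: by Proposition \ref{spectrum as composite}, $\operatorname{dSpec}\mathcal{L}=\operatorname{dpt}\circ\operatorname{Idl}\mathcal{L}$, and spaces of d-points are d-sober. The remaining work is compactness of $(\operatorname{dSpec}\mathcal{L},\tau_+\vee\tau_-)$ and coherence of $\tau_+$ and $\tau_-$. Both reduce to a single prime-separation lemma for d-lattices.

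\textbf{Preliminary reformulation.} For a prime d-filter $g$, we have $g({t\!t})\sqsupseteq{t\!t}$, $g({f\!\!f})\sqsupseteq{f\!\!f}$ and $g({t\!t})\sqcap g({f\!\!f})=g(0)=0$. Hence $g({t\!t})={t\!t}$ and $g({f\!\!f})={f\!\!f}$. It follows that $g$ maps $L_+$ into $\{0,{t\!t}\}$ and $L_-$ into $\{0,{f\!\!f}\}$. Consequently $g$ is determined by the pair of prime filters
\[
F_+=\{a\in L_+:g(a)={t\!t}\},\qquad F_-=\{b\in L_-:g(b)={f\!\!f}\}.
\]
Conversely, a pair $(F_+,F_-)$ of prime filters comes from a prime d-filter exactly when two conditions hold:
\begin{enumerate}[label=\rm(\alph*)] \setlength{\itemsep}{0pt}
\item $a\sqcup b\in{\bf con}$ implies $a\notin F_+$ or $b\notin F_-$;
\item $a\sqcup b\in{\bf tot}$ implies $a\in F_+$ or $b\in F_-$.
\end{enumerate}
This description immediately gives two facts. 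First, $\phi_+(a)\cap\phi_+(a')=\phi_+(a\sqcap a')$ and $\phi_+(a)\cup\phi_+(a')=\phi_+(a\sqcup a')$. Second, $\phi_+({t\!t})$ is the whole space. The same holds for $\phi_-$.

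\textbf{Key lemma (separation).} Let $I_+\in\operatorname{Idl}L_+$ and $I_-\in\operatorname{Idl}L_-$ be ideals such that no $i\sqcup j$ with $i\in I_+$, $j\in I_-$ lies in ${\bf tot}$. Then there is a prime d-filter $g$ with $g(I_+)=0$ and $g(I_-)=0$.

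The proof I would try is as follows.
\begin{enumerate}[label=\rm(\arabic*)] \setlength{\itemsep}{0pt}
\item Apply Zorn's lemma to pairs of ideals extending $(I_+,I_-)$ that still avoid ${\bf tot}$ in this sense, and take a maximal pair $(J_+,J_-)$. Set $F_\pm=L_\pm\setminus J_\pm$.
\item Show $J_+$ is prime. Suppose $a\sqcap a'\in J_+$ but $a,a'\notin J_+$. Maximality gives $i\sqcup j\in{\bf tot}$ and $i'\sqcup j'\in{\bf tot}$ with $i\sqsubseteq a\sqcup k$ and $i'\sqsubseteq a'\sqcup k'$ for some $k,k'\in J_+$, and $j,j'\in J_-$. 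Take the logic-order meet $\wedge$ of these two ${\bf tot}$-elements, which stays in ${\bf tot}$ because ${\bf tot}$ is a sublattice under $\leq$. In coordinates its $+$-part is the meet of the $+$-parts and its $-$-part is the join of the $-$-parts. So it lies below $((a\sqcap a')\sqcup k\sqcup k')\sqcup(j\sqcup j')$. This element is in ${\bf tot}$ by upward closure, yet its $+$-part lies in $J_+$ and its $-$-part in $J_-$, a contradiction. The same argument shows $J_-$ is prime.
\item Condition (b) holds by construction, since a ${\bf tot}$-element $a\sqcup b$ cannot have $a\in J_+$ and $b\in J_-$.
\item Condition (a) is where I use the (${\bf con}$--${\bf tot}$) axiom. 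Suppose $a\in F_+$, $b\in F_-$ and $a\sqcup b\in{\bf con}$. Maximality of $J_+$ (adding $a$) yields a ${\bf tot}$-element $\beta$ with $+$-part below $a\sqcup k$ and $-$-part in $J_-$. Maximality of $J_-$ (adding $b$) yields one with $+$-part in $J_+$ and $-$-part below $b\sqcup k'$. Combine these through the logic-order operations with $a\sqcup b$ and with ${t\!t},{f\!\!f}$ to produce $\alpha\in{\bf con}$ and $\beta\in{\bf tot}$ that agree on one coordinate. Then $\alpha\sqsubseteq\beta$, which should force some ${\bf tot}$-element into $J_+\times J_-$.
\end{enumerate}
I expect step (4) to be the main obstacle. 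I would first try the special case $k=k'=0$ to find the right combination of elements.

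\textbf{Consequences.} For compactness, use Alexander's subbase lemma with the subbasis $\{\phi_+(a)\}\cup\{\phi_-(b)\}$. A subbasic cover without a finite subcover yields ideals $I_\pm$ as in the lemma, namely the ideals generated by the indices used, because a finite subcover exists precisely when some $i\sqcup j$ lies in ${\bf tot}$. The lemma then gives a point not covered, a contradiction.

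For coherence of $\tau_+$, suppose $\phi_+(a)\subseteq\phi_+(I_+)$ with $a\notin I_+$. Apply the lemma to $I_+$ and $I_-=\{0\}$, after first restricting to pairs $(J_+,J_-)$ with $a\notin J_+$; this is the same Zorn argument relativized. This shows that each $\phi_+(a)$ is compact in $\tau_+$. Hence the compact open sets of $\tau_+$ are exactly the $\phi_+(a)$, since a compact open set is a finite union of basic sets and $\phi_+$ preserves finite unions. These contain the whole space, are closed under finite intersections, and form a basis, so $\tau_+$ is a coherent frame. The argument for $\tau_-$ is symmetric.
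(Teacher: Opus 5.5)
\textbf{Gap: the key separation lemma is false.} Your reformulation of prime d-filters as pairs $(F_+,F_-)$ satisfying (a) and (b) is correct, and steps (1)--(3) go through. However, the key separation lemma is false, so no combination of elements will complete step (4).

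Here is a counterexample. Let $L_+=\{0,p,p',{t\!t}\}$ and $L_-=\{0,q,q',{f\!\!f}\}$ be four-element Boolean algebras, with $p\sqcap p'=0$, $p\sqcup p'={t\!t}$, and likewise for $q,q'$. Put $L=L_+\times L_-$ and
\[
{\bf con}=\{(x,0)\}\cup\{(0,y)\}\cup\{(p,q)\},\qquad {\bf tot}=\{({t\!t},y)\}\cup\{(x,{f\!\!f})\}\qquad (x\in L_+,\ y\in L_-).
\]
A direct check shows this is a d-lattice. For ({\bf con}--{\bf tot}), note that $(p,q)$ is the only element of ${\bf con}$ with both coordinates nonzero. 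The elements of ${\bf tot}$ sharing a coordinate with it are $(p,{f\!\!f})$ and $({t\!t},q)$, and both lie above it; the remaining instances are immediate.

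In this d-lattice, condition (b) holds for every pair of prime filters, while (a) excludes exactly $(\uparrow p,\uparrow q)$. Now take $I_+=\{0,p'\}$ and $I_-=\{0,q'\}$. No $i\sqcup j$ with $i\in I_+$, $j\in I_-$ lies in ${\bf tot}$. Yet the only pair of prime filters missing both $p'$ and $q'$ is $(\uparrow p,\uparrow q)$, which is not a point.

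The failure shows up at each stage of your argument:
\begin{itemize}
\item In your Zorn argument, $(I_+,I_-)$ is already maximal, and its complement violates (a). So the obstacle in step (4) is fatal, not merely technical.
\item The claim in your compactness paragraph, that a finite subcover exists precisely when some $i\sqcup j\in{\bf tot}$, is wrong. Here $\phi_+(p')\cup\phi_-(q')$ is the whole space although $p'\sqcup q'\notin{\bf tot}$: totality is not reflected by the spectrum in general.
\item Your relativized argument for coherence has the same defect. With $a=p$ and $I_+=\{0\}$, the pair $(\{0,p'\},\{0,q'\})$ is maximal in the relativized family and again yields a non-point.
\end{itemize}

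\textbf{How to repair it.} The missing idea is to build condition (a) into the construction rather than check it afterwards.

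For coherence, this is what the paper does in Proposition \ref{varphi is injective}. First fix a prime filter $F_+$ of $L_+$ (containing $a$ and missing $I_+$). Then $C_F=\{y: x\sqcup y\in{\bf con}\text{ for some }x\in F_+\}$ is an ideal of $L_-$ and $T_F=\{y: x\sqcup y\in{\bf tot}\text{ for some }x\in L_+\setminus F_+\}$ is a filter of $L_-$, and they are disjoint by ({\bf con}--{\bf tot}). Any prime filter $F_-$ containing $T_F$ and missing $C_F$ gives a point. Hence $\phi_+\colon\operatorname{Idl}L_+\to\tau_+$ is a frame isomorphism.

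For compactness, the Alexander reduction really needs this statement: if $\phi_+(I_+)\cup\phi_-(I_-)$ is the whole space, then some single $\phi_+(i)\cup\phi_-(j)$ is. As the example shows, that is not a statement about ${\bf tot}$. The paper instead identifies the patch space with the spectrum of the d-Boolean reflection $\Gamma\mathcal{L}$ (Theorem \ref{spectrum of Gamma = patch}), which is d-Boolean and hence compact. A self-contained alternative: the pairs satisfying (a) and (b) form a closed subset of $\operatorname{Spec}L_+\times\operatorname{Spec}L_-$ with the product of the patch topologies, since each instance of (a) or (b) is clopen there. They are therefore compact in a topology finer than $\tau_+\vee\tau_-$.
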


\begin{proof}
Proposition \ref{spectrum as composite} shows that $(\operatorname{dSpec}\mathcal{L},\tau_+,\tau_-)$ is d-sober. It remains to prove that both $\tau_+$ and $\tau_-$ are coherent frames and that $(\operatorname{dSpec}\mathcal{L},\tau_+,\tau_-)$ is compact. These facts will be established in Proposition \ref{varphi is injective} and Proposition \ref{spectrum is compact}.
\end{proof}

\begin{prop}\label{varphi is injective}
Let $(\operatorname{dSpec}\mathcal{L},\tau_+,\tau_-)$ be the spectrum of a d-lattice $\mathcal{L}=(L;{t\!t},{f\!\!f};{\bf con},{\bf tot})$.
\begin{enumerate}[label=\rm(\roman*)] \setlength{\itemsep}{0pt}
\item The map
\[
\phi_+\colon \operatorname{Idl} L_+ \to \tau_+,\quad
I \mapsto \{g\in\operatorname{dSpec}\mathcal{L}: \exists a\in I,\; g(a)={t\!t}\}
\]
is an isomorphism of frames.
\item The map
\[
\phi_-\colon \operatorname{Idl} L_- \to \tau_-,\quad
J \mapsto \{g\in\operatorname{dSpec}\mathcal{L}: \exists b\in J,\; g(b)={f\!\!f}\}
\]
is an isomorphism of frames.
\end{enumerate}
Hence both $\tau_+$ and $\tau_-$ are coherent frames. Furthermore, the compact open sets of $(\operatorname{dSpec}\mathcal{L},\tau_+)$ are precisely the sets $\phi_+(a)$ with $a\in L_+$; analogously for $(\operatorname{dSpec}\mathcal{L},\tau_-)$.
\end{prop}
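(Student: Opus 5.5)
The plan is to show that $\phi_+$ is a surjective frame homomorphism which is also injective; the case of $\phi_-$ is symmetric. Coherence then follows because $\operatorname{Idl} L_+$ is the frame of ideals of a distributive lattice. Its finite elements are the principal ideals $\downarrow a$, so the compact open sets of $(\operatorname{dSpec}\mathcal{L},\tau_+)$ are exactly the sets $\phi_+(\downarrow a)=\phi_+(a)$ with $a\in L_+$.

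\emph{Homomorphism and surjectivity.} Let $g$ be a prime d-filter, i.e.\ a d-lattice homomorphism $\mathcal{L}\to\mathbb{B}$. Then $g({t\!t})={t\!t}$, and for $a\in L_+$ we have $a\sqsubseteq{t\!t}$, so $g(a)\in\{0,{t\!t}\}$. Since $g$ preserves $\sqcap$ and $\sqcup$, we get
\[
\phi_+(a\sqcap a')=\phi_+(a)\cap\phi_+(a'), \qquad \phi_+(a\sqcup a')=\phi_+(a)\cup\phi_+(a'),
\]
together with $\phi_+(0)=\varnothing$ and $\phi_+({t\!t})=\operatorname{dSpec}\mathcal{L}$. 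Meets of ideals are intersections, and joins of ideals consist of finite joins of elements. Combining this with the displayed identities, $\phi_+$ preserves finite meets and arbitrary joins. Surjectivity was already noted: every $\tau_+$-open set has the form $\phi_+(I_+)$.

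\emph{Injectivity (main obstacle).} Since $\phi_+$ is monotone and preserves joins, it suffices to prove the following. If $a\in L_+$ and $a\notin I$, then some prime d-filter $g$ satisfies $g(a)={t\!t}$ and $g(I)\subseteq\{0\}$. This is a prime filter theorem for d-lattices. I would build $g$ in two stages, working in the product presentation $L=L_+\times L_-$.

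First, by the classical prime filter theorem choose a prime filter $P$ of $L_+$ with $a\in P$ and $P\cap I=\varnothing$.

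Second, look for a prime filter $Q$ of $L_-$ and set $g(x,y)=\chi_P(x)\sqcup\chi_Q(y)$. Here $\chi_P(x)={t\!t}$ if $x\in P$ and $0$ otherwise, and $\chi_Q(y)={f\!\!f}$ if $y\in Q$ and $0$ otherwise. This $g$ preserves $\sqcap$, $\sqcup$, $0$, ${t\!t}$ and ${f\!\!f}$. It preserves $\mathbf{tot}$ and $\mathbf{con}$ exactly when the following two conditions hold:
\[
A=\{y:\exists x\notin P,\ (x,y)\in\mathbf{tot}\}\subseteq Q, \qquad C=\{y:\exists x\in P,\ (x,y)\in\mathbf{con}\}, \quad Q\cap C=\varnothing .
\]

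Now check that $A$ is a filter of $L_-$. It is nonempty, since ${f\!\!f}=(0,1)\in\mathbf{tot}$ and $0\notin P$, so $1\in A$. It is upward closed because $\mathbf{tot}$ is $\sqsubseteq$-upper. It is closed under $\sqcap$ because $\mathbf{tot}$ is closed under the logic join $(x_1\sqcup x_2,\,y_1\sqcap y_2)$, and $x_1\sqcup x_2\notin P$ by primeness of $P$.

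Dually, $C$ is an ideal of $L_-$. It contains $0$ via ${t\!t}=(1,0)\in\mathbf{con}$. It is downward closed because $\mathbf{con}$ is $\sqsubseteq$-lower. It is closed under $\sqcup$ because $\mathbf{con}$ is closed under the logic meet $(x_1\sqcap x_2,\,y_1\sqcup y_2)$, and $x_1\sqcap x_2\in P$.

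The crucial point is that $A\cap C=\varnothing$, and this uses the (\textbf{con}--\textbf{tot}) axiom. Suppose $y\in A\cap C$, so $(x',y)\in\mathbf{con}$ with $x'\in P$ and $(x,y)\in\mathbf{tot}$ with $x\notin P$. These two elements share their $L_-$-component, so the axiom gives $(x',y)\sqsubseteq(x,y)$, hence $x'\sqsubseteq x$. Then $x\in P$, a contradiction.

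The prime filter theorem in $L_-$ now yields a prime $Q\supseteq A$ with $Q\cap C=\varnothing$; it is proper since $0\in C$. The resulting $g$ is a prime d-filter with $g(a)={t\!t}$ and $g(I)=\{0\}$, which proves injectivity and completes the argument.
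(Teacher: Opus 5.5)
Your proposal is correct and follows essentially the same route as the paper. You choose a prime filter $P$ of $L_+$ separating $a$ from $I$, form the ideal $C$ and the filter $A$ of $L_-$ (the paper's $C_F$ and $T_F$), obtain their disjointness from the (\textbf{con}--\textbf{tot}) axiom, pick a separating prime filter $Q$ of $L_-$, and define the same $g$; along the way you also spell out several points the paper treats as routine, such as the nonemptiness of $A$ and $C$, why $g$ preserves $\mathbf{con}$ and $\mathbf{tot}$, the frame-homomorphism property, and the coherence claim.
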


\begin{proof}
(i) Clearly, $\phi_+\colon \operatorname{Idl} L_+ \to \tau_+$ is a surjective frame homomorphism. We only need to show that if $I$ and $I'$ are distinct ideals of $L_+$, then $\phi_+(I)\neq \phi_+(I')$. Without loss of generality, assume $I\not\subseteq I'$. Then there exists a prime filter $F_+$ of $L_+$ that intersects $I$ but is disjoint from $I'$.

Define
\begin{align*}
C_F&=\{y\in L_-: x\sqcup y\in{\bf con}\text{ for some }x\in F_+\},\\
T_F&=\{y\in L_-: x\sqcup y\in{\bf tot}\text{ for some }x\in L_+\setminus F_+\}.
\end{align*}
We claim that $C_F$ is an ideal of $L_-$, $T_F$ is a filter of $L_-$, and $C_F\cap T_F=\varnothing$.

It is clear that $C_F$ is a lower set. If $y,y'\in C_F$, then there exist $x,x'\in F_+$ such that $x\sqcup y\in{\bf con}$ and $x'\sqcup y'\in{\bf con}$. Since ${\bf con}$ is a sublattice of $(L,\leq)$, we have
\[
(x\sqcup y)\wedge(x'\sqcup y') = (x\sqcap x')\sqcup (y\sqcup y')\in{\bf con}.
\]
Because $F_+$ is a filter, $x\sqcap x'\in F_+$, so $y\sqcup y'\in C_F$. Hence $C_F$ is an ideal. Similarly, $T_F$ is a filter. To see $C_F\cap T_F=\varnothing$, suppose $y\in C_F\cap T_F$. Then there exist $x\in F_+$ and $x'\in L_+\setminus F_+$ with $x\sqcup y\in{\bf con}$ and $x'\sqcup y\in{\bf tot}$. By the ({\bf con-tot}) axiom, we get $x\sqsubseteq x'$, contradicting that $F_+$ is an upper set.

Since $C_F\cap T_F=\varnothing$, there exists a prime filter $F_-$ of $L_-$ containing $T_F$ and disjoint from $C_F$. Using the pair $(F_+,F_-)$, we construct a prime d-filter $g\in\operatorname{dSpec}\mathcal{L}$ as follows:
\[
g(a\sqcup b)=
\begin{cases}
1, & a\in F_+,\; b\in F_-,\\
{t\!t}, & a\in F_+,\; b\notin F_-,\\
{f\!\!f}, & a\notin F_+,\; b\in F_-,\\
0, & a\notin F_+,\; b\notin F_-,
\end{cases}
\]
for all $a\in L_+$, $b\in L_-$. One verifies that this defines a prime d-filter. Since $F_+\cap I\neq\varnothing$, we have $g\in\phi_+(I)$; since $F_+\cap I'=\varnothing$, we have $g\notin\phi_+(I')$. Thus $\phi_+(I)\neq\phi_+(I')$.

(ii) The proof is analogous to that of (i).
\end{proof}

Before proceeding to prove that the spectrum of every d-lattice is a compact bitopological space, we collect some facts about spectra of d-Boolean algebras.

\begin{prop}\label{spectra of d-Boolean algebras}
Let $\mathcal{L}=(L;{t\!t},{f\!\!f};{\bf con},{\bf tot})$ be a d-Boolean algebra and let $(\operatorname{dSpec}\mathcal{L},\tau_+,\tau_-)$ be its spectrum.
\begin{enumerate}[label=\rm(\roman*)] \setlength{\itemsep}{0pt}
\item The d-frame $\operatorname{Idl}\mathcal{L}$ is isomorphic to the d-frame of open sets of $(\operatorname{dSpec}\mathcal{L},\tau_+,\tau_-)$; that is,
\[
\mathrm{d}\mathcal{O}\circ\operatorname{dSpec}\mathcal{L}\cong \operatorname{Idl}\mathcal{L}.
\]
In particular, $\operatorname{Idl}\mathcal{L}$ is spatial.
\item $\mathcal{L}$ is isomorphic to the d-Boolean algebra of d-clopen sets of $(\operatorname{dSpec}\mathcal{L},\tau_+,\tau_-)$; that is,
\[
\operatorname{dClop}\circ\operatorname{dSpec}\mathcal{L}\cong \mathcal{L}.
\]
\item The bitopological space $(\operatorname{dSpec}\mathcal{L},\tau_+,\tau_-)$ is d-Boolean.
\end{enumerate}
\end{prop}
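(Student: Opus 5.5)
The plan is to prove (i) first, then obtain (ii) and (iii) from it using the results of \cite{YZ2026} recalled in Section \ref{dLB}.

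\emph{Step 1: a frame isomorphism.} By Proposition \ref{varphi is injective}, the maps $\phi_+\colon\operatorname{Idl}L_+\to\tau_+$ and $\phi_-\colon\operatorname{Idl}L_-\to\tau_-$ are frame isomorphisms. Since $\operatorname{Idl}L\cong\operatorname{Idl}L_+\times\operatorname{Idl}L_-$ via $I\mapsto(I\cap L_+,\,I\cap L_-)$, they combine to a frame isomorphism
\[
\Phi\colon \operatorname{Idl}L\to\tau_+\times\tau_-,\qquad I\mapsto\bigl(\phi_+(I\cap L_+),\,\phi_-(I\cap L_-)\bigr).
\]
This sends $\downarrow{t\!t}$ to $(X,\varnothing)$ and $\downarrow{f\!\!f}$ to $(\varnothing,X)$, where $X=\operatorname{dSpec}\mathcal{L}$. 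It remains to show that $\Phi$ matches ${\bf con}_{\operatorname{Idl}}$ with disjoint pairs and ${\bf tot}_{\operatorname{Idl}}$ with covering pairs.

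\emph{Step 2: con and tot.} Preservation is easy. If $a\sqcup b\in{\bf con}$ and some $g$ had $g(a)={t\!t}$ and $g(b)={f\!\!f}$, then $g(a\sqcup b)=1$, contradicting $g({\bf con})\subseteq\{0,{t\!t},{f\!\!f}\}$. Dually, ${\bf tot}$ elements give covers. For ideals, ${\bf con}$ is a lower set, so it suffices to treat elements.

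Reflection is the main obstacle, and here the d-Boolean hypothesis enters through Lemma \ref{prime d-filter}. For $a\in L_+$, that lemma gives $g(a)={t\!t}\iff g(a^\dag)=0$, that is, $g\in\phi_+(a)\iff g\notin\phi_-(a^\dag)$. Hence $\phi_+(a)$ and $\phi_-(a^\dag)$ are complementary, and dually for $b\in L_-$.

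Now let $I$ be an ideal with $\phi_+(I_+)\cap\phi_-(I_-)=\varnothing$, and take $a\sqcup b\in I$. Then $\phi_-(b)\subseteq X\setminus\phi_+(a)=\phi_-(a^\dag)$. By injectivity of $\phi_-$ on ideals, $b\sqsubseteq a^\dag$. Therefore $a\sqcup b\sqsubseteq a\sqcup a^\dag\in{\bf con}$, so $a\sqcup b\in{\bf con}$.

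For ${\bf tot}$, suppose $\phi_+(I_+)\cup\phi_-(I_-)=X$. Since $X$ is compact, as the proof will establish, finitely many basic opens $\phi_+(a)\cup\phi_-(b)$ cover $X$. Then $\phi_-(b)\supseteq\phi_-(a^\dag)$, so $b\sqsupseteq a^\dag$ and $a\sqcup b\in{\bf tot}$.

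\emph{Step 3: compactness.} Compactness of the spectrum for arbitrary d-lattices is only promised later (Proposition \ref{spectrum is compact}), so for d-Boolean algebras I prove it directly. For $a\in L_+$ set $A_a=\phi_+(a)=X\setminus\phi_-(a^\dag)$, so that both $A_a$ and its complement are basic opens. Then $\tau_+\vee\tau_-$ is generated by clopen sets of this form, and $X$ embeds into $\mathbb{B}^L$, or more simply into $2^{L_+}\times 2^{L_-}$, as the set of pairs of prime filters $(F_+,F_-)$ that are compatible via $\dag$ (Lemma \ref{prime d-filter}). These compatibility conditions are closed in the product topology, so by Tychonoff $X$ is compact.

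\emph{Remaining parts.} This gives (i). For (ii), apply $\operatorname{dB}$ to (i): $\operatorname{dClop}\circ\operatorname{dSpec}\mathcal{L}=\operatorname{dB}\circ\mathrm{d}\mathcal{O}\circ\operatorname{dSpec}\mathcal{L}\cong\operatorname{dB}\circ\operatorname{Idl}\mathcal{L}\cong\mathcal{L}$, using Proposition \ref{the d-Boolean algebra coreflection of Idl}. For (iii), the space is compact, it is $T_0$ because it is d-sober (Proposition \ref{spectrum as composite}), and it is zero-dimensional because $\tau_+$ has the basis $\phi_+(a)=X\setminus\phi_-(a^\dag)$ of $\tau_-$-closed sets, and dually for $\tau_-$. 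Proposition \ref{Stone bitopological space} then shows it is d-Boolean.
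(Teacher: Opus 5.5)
Your proof is correct, but it takes a different route from the paper. The paper proves (ii) exactly as you do. For (i) and (iii), however, it simply cites \cite[Proposition 5.17]{YZ2026} and \cite[Theorem 5.19]{YZ2026}. You instead derive both inside the paper from three ingredients: the frame isomorphisms $\phi_\pm$ of Proposition \ref{varphi is injective}, the complementarity $\phi_+(a)=X\setminus\phi_-(a^\dag)$ given by Lemma \ref{prime d-filter}, and a direct Tychonoff argument for compactness.

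What your version gains is self-containedness and transparency. It shows that the d-Boolean hypothesis enters only through that complementarity, which at once gives three things: reflection of ${\bf con}$ (via order-reflection of the injective meet-preserving map $\phi_-$), reflection of ${\bf tot}$ (once compactness is available), and zero-dimensionality. You were also right not to invoke Proposition \ref{spectrum is compact}. Its proof in the paper passes through Theorem \ref{spectrum of Gamma = patch}, and hence through this very proposition, so using it here would be circular. The citation route is shorter but hides all of this in the companion paper.

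Two small details in your Step 3 deserve a sentence each.
\begin{itemize}
\item Lemma \ref{prime d-filter} is stated for \emph{proper d-filters}, which by definition preserve ${\bf tot}$. To identify the image of $X$ in $2^{L_+}\times 2^{L_-}$, you should note that a pair of prime filters satisfying the $\dag$-compatibility conditions automatically yields a ${\bf tot}$-preserving map. Indeed, if $a\sqcup b\in{\bf tot}$ and $a\notin F_+$, then $a^\dag\in F_-$. Applying the ({\bf con}--{\bf tot}) axiom to $a\sqcup a^\dag$ and $a\sqcup b$ gives $a^\dag\sqsubseteq b$, so $b\in F_-$. Alternatively, just add ${\bf tot}$-preservation as one more closed condition.
\item For compactness of $(X,\tau_+\vee\tau_-)$ it is enough that each $\phi_+(a)$ and $\phi_-(b)$ is open in the subspace topology, since compactness passes to coarser topologies.
\end{itemize}
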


\begin{proof}
(i) This is precisely \cite[Proposition 5.17]{YZ2026}.

(ii) By Proposition \ref{the d-Boolean algebra coreflection of Idl}, $\mathcal{L}\cong \operatorname{dB}\circ\operatorname{Idl}\mathcal{L}$. Hence
\[
\mathcal{L}\cong \operatorname{dB}\circ\operatorname{Idl}\mathcal{L}
\cong \operatorname{dB}\circ\mathrm{d}\mathcal{O}\circ\operatorname{dSpec}\mathcal{L}
\cong \operatorname{dClop}\circ\operatorname{dSpec}\mathcal{L}.
\]

(iii) This follows from \cite[Theorem 5.19]{YZ2026}.
\end{proof}

\begin{prop}\label{spectrum of d-boolean algebra is d-Boolean}
Let $\mathcal{L}=(L;{t\!t},{f\!\!f};{\bf con},{\bf tot})$ be a d-Boolean algebra and let $(\operatorname{dSpec}\mathcal{L},\tau_+,\tau_-)$ be its spectrum. Then
\[
\tau_+^k=\tau_- \quad\text{and}\quad \tau_-^k=\tau_+,
\]
so $(\operatorname{dSpec}\mathcal{L},\tau_+,\tau_-)$ is its own patch space.
\end{prop}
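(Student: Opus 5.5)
To prove $\tau_+^k=\tau_-$ and $\tau_-^k=\tau_+$, I would compare subbases on both sides, following the proof of Lemma \ref{spectrum preserves involution}. The patch-space statement then follows at once, since
\[
\tau_+\vee\tau_-^k=\tau_+\vee\tau_+=\tau_+ \quad\text{and}\quad \tau_-\vee\tau_+^k=\tau_-.
\]
By Proposition \ref{spectra of d-Boolean algebras}(iii) the spectrum is d-Boolean, so it is order-separated and compact. By the proposition characterizing d-Boolean spaces, it is also d-spectral. I prove $\tau_+^k=\tau_-$; the other equality is symmetric.

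\emph{Inclusion $\tau_+^k\subseteq\tau_-$.} By Lemma \ref{compact opens and dclopens}(i), every compact saturated subset of $(X,\tau_+)$ is $\tau_-$-closed. Since these sets form a subbasis of closed sets for $\tau_+^k$, every $\tau_+^k$-closed set is $\tau_-$-closed. Hence $\tau_+^k\subseteq\tau_-$.

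\emph{Inclusion $\tau_-\subseteq\tau_+^k$.} It suffices to check that each subbasic set $\phi_-(b)$, $b\in L_-$, is $\tau_+^k$-open. I will show that
\[
X\setminus\phi_-(b)=\phi_+(b^\dag),
\]
where $b^\dag\in L_+$ is the d-complement of $b$ (which exists because $\mathcal{L}$ is d-Boolean). Granting this, Proposition \ref{varphi is injective} says $\phi_+(b^\dag)$ is compact open in $(X,\tau_+)$. Open sets are upper sets in the specialization order, so it is also saturated. Therefore its complement $\phi_-(b)$ is $\tau_+^k$-open.

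\emph{The identity $g(b)={f\!\!f}\iff g(b^\dag)\neq{t\!t}$.} This is the main obstacle, and I would derive it from Lemma \ref{prime d-filter} together with the fact that $g$ preserves ${\bf con}$ and ${\bf tot}$.
\begin{itemize}
\item Since $b^\dag\sqcup b\in{\bf con}$, we have $g(b^\dag)\sqcup g(b)\in\{0,{t\!t},{f\!\!f}\}$. So $g(b^\dag)={t\!t}$ and $g(b)={f\!\!f}$ cannot both hold.
\item Since $b^\dag\sqcup b\in{\bf tot}$, we have $g(b^\dag)\sqcup g(b)\neq 0$. By Lemma \ref{prime d-filter}(ii), $g(b)\neq{f\!\!f}$ means $g(b)=0$ (values are in $\downarrow{f\!\!f}$), which forces $g(b^\dag)\neq 0$. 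Since $g(b^\dag)\in\{0,{t\!t}\}$, we get $g(b^\dag)={t\!t}$.
\end{itemize}
Here I use that a prime d-filter maps $L_+$ into $\downarrow{t\!t}$ and $L_-$ into $\downarrow{f\!\!f}$, because $g$ preserves finite meets and sends ${t\!t}\mapsto{t\!t}$, ${f\!\!f}\mapsto{f\!\!f}$.

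The second inclusion alone already yields $\tau_-\subseteq\tau_+^k$. Combined with the first inclusion, this gives $\tau_+^k=\tau_-$.
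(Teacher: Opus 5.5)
Your proof is correct, but for one of the two inclusions it takes a different route from the paper.

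\emph{The paper's route.} The paper proves $\tau_+^k=\tau_-$ in a single step. Because the compact opens $\phi_+(a)$, $a\in L_+$, form a basis of $\tau_+$ that is closed under finite unions, every compact saturated set is an intersection of such sets. Hence $\tau_+^k$ is generated by the complements $\operatorname{dSpec}\mathcal{L}\setminus\phi_+(a)$. By Lemma \ref{prime d-filter} these complements are exactly the sets $\phi_-(a^\dag)$. As $a$ ranges over $L_+$, $a^\dag$ ranges over all of $L_-$, since $b=(b^\dag)^\dag$. So this is precisely the generating family of $\tau_-$.

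\emph{Where you agree and where you differ.} Your inclusion $\tau_-\subseteq\tau_+^k$ is the same identity, read from the $L_-$ side. For $\tau_+^k\subseteq\tau_-$, you instead appeal to order-separation. You import d-Booleanness of the spectrum from Proposition \ref{spectra of d-Boolean algebras}(iii), which the paper takes from an earlier work, and then apply Lemma \ref{compact opens and dclopens}(i).

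\emph{What each approach buys.} Your route spares you the routine check that compact saturated sets are intersections of the $\phi_+(a)$, and the surjectivity of ${}^\dag$. The price is reliance on two external results. The paper's argument uses only the d-complement structure and Proposition \ref{varphi is injective}, so it is self-contained and independent of the d-Boolean property of the spectrum.

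\emph{Two minor points.} First, the step you call the main obstacle is immediate from Lemma \ref{prime d-filter}(ii). Since $g(L_+)\subseteq\{0,{t\!t}\}$,
\[
g(b)={f\!\!f}\iff g(b^\dag)=0\iff g(b^\dag)\neq{t\!t}.
\]
Your direct derivation from preservation of ${\bf con}$ and ${\bf tot}$ is also valid. However, citing Lemma \ref{prime d-filter}(ii) for ``$g(b)\neq{f\!\!f}$ means $g(b)=0$'' is misplaced: that fact comes from $g(L_-)\subseteq\{0,{f\!\!f}\}$. Second, your remark that the spectrum is d-spectral is never used.
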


\begin{proof} 
Since the family $\{\phi_+(a):a\in L_+\}$ of compact open subsets of $(\operatorname{dSpec}\mathcal{L},\tau_+)$ form a basis for $(\operatorname{dSpec}\mathcal{L},\tau_+)$, it is easily verified that $\tau_+^k$ is generated by the sets
\[
\operatorname{dSpec}\mathcal{L}\setminus \phi_+(a),\qquad a\in L_+.
\]
Since $\mathcal{L}$ is a d-Boolean algebra, Lemma \ref{prime d-filter} gives the identity
\[
\phi_-(a^\dag)=\operatorname{dSpec}\mathcal{L}\setminus \phi_+(a)
\]
for each $a\in L_+$. Thus the collection
\[
\{\phi_-(a^\dag): a\in L_+\}
\]
forms a basis for $\tau_+^k$. But because $\mathcal{L}$ is a d-Boolean algebra, this same collection is a basis for $\tau_-$. Therefore $\tau_+^k=\tau_-$. The equality $\tau_-^k=\tau_+$ follows analogously.
\end{proof}

For each d-lattice $\mathcal{L}=(L;{t\!t},{f\!\!f};{\bf con},{\bf tot})$, we write
\[
(\operatorname{dSpec}\mathcal{L},\delta_+,\delta_-)
\]
for the patch space of the spectrum $(\operatorname{dSpec}\mathcal{L},\tau_+,\tau_-)$. Thus $\delta_+$ is generated as a subbasis by
\[
\{\phi_+(a):a\in L_+\}\cup \{\operatorname{dSpec}\mathcal{L}\setminus \phi_-(b): b\in L_-\},
\]
and $\delta_-$ by
\[
\{\phi_-(b):b\in L_-\}\cup \{\operatorname{dSpec}\mathcal{L}\setminus \phi_+(a): a\in L_+\}.
\]

For all $a\in L_+$ and $b\in L_-$, the set $\phi_+(a)$ is $\delta_+$-open and $\delta_-$-closed, while $\phi_-(b)$ is $\delta_-$-open and $\delta_+$-closed. Hence the correspondence
\[
(a,b)\mapsto (\phi_+(a),\phi_-(b))
\]
defines a d-lattice homomorphism from $\mathcal{L}$ to the d-Boolean algebra of d-clopen sets of the space $(\operatorname{dSpec}\mathcal{L},\delta_+,\delta_-)$.

Let $$\Gamma\mathcal{L}$$ be the smallest subalgebra of the d-Boolean algebra of d-clopen sets of $(\operatorname{dSpec}\mathcal{L},\delta_+,\delta_-)$ that contains
\[
\{(\phi_+(a),\varnothing):a\in L_+\}\cup \{(\varnothing,\phi_-(b)):b\in L_-\}.
\]
Write the underlying lattice of $\Gamma\mathcal{L}$ as $D_+\times D_-$. It is not hard to check that every element of $D_+$ is a finite join of sets of the form
\[
\phi_+(a)\cap(\operatorname{dSpec}\mathcal{L}\setminus \phi_-(b)),\quad a\in L_+,\ b\in L_-,
\]
and similarly every element of $D_-$ is a finite join of sets of the form
\[
\phi_-(b)\cap(\operatorname{dSpec}\mathcal{L}\setminus \phi_+(a)),\quad a\in L_+,\ b\in L_-.
\]

\begin{prop}\label{reflective}
For each d-lattice $\mathcal{L}=(L;{t\!t},{f\!\!f};{\bf con},{\bf tot})$, the map
\[
\gamma_\mathcal{L}\colon \mathcal{L}\to \Gamma\mathcal{L},\quad
(a,b)\mapsto (\phi_+(a),\phi_-(b))
\]
is a d-lattice homomorphism, and the pair $(\Gamma\mathcal{L},\gamma_\mathcal{L})$ is a ${\bf dBA}$-reflection of $\mathcal{L}$.
\end{prop}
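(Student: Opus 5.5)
The plan is to reduce the universal property to the spectrum construction. The facts that do the work are Proposition \ref{spectra of d-Boolean algebras}(ii), which recovers every d-Boolean algebra as the d-clopen sets of its spectrum, and Lemma \ref{prime d-filter}. That $\gamma_\mathcal{L}$ is a d-lattice homomorphism into $\Gamma\mathcal{L}$ was already observed before the statement: the pair $(\phi_+(a),\phi_-(b))$ lies in the d-clopen algebra of the patch space, $\phi_\pm$ preserve finite meets and joins, and ${\bf con}$ and ${\bf tot}$ are preserved by primeness of the points. So it remains to fix a d-Boolean algebra $\mathcal{M}$ and a d-lattice homomorphism $h\colon\mathcal{L}\to\mathcal{M}$, and to produce a unique d-lattice homomorphism $\bar h\colon\Gamma\mathcal{L}\to\mathcal{M}$ with $\bar h\circ\gamma_\mathcal{L}=h$.

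\emph{Existence.} Precomposition with $h$ gives a map $h^*\colon\operatorname{dSpec}\mathcal{M}\to\operatorname{dSpec}\mathcal{L}$, $g\mapsto g\circ h$, since a composite of d-lattice homomorphisms into $\mathbb{B}$ is again one. I will show that $h^*$ is continuous from the spectrum $(\operatorname{dSpec}\mathcal{M},\tau_+,\tau_-)$ to the patch space $(\operatorname{dSpec}\mathcal{L},\delta_+,\delta_-)$. The subbasic opens of $\delta_+$ pull back as follows:
\[
(h^*)^{-1}(\phi_+(a))=\phi_+(h(a)),
\]
which is $\tau_+$-open, and
\[
(h^*)^{-1}(\operatorname{dSpec}\mathcal{L}\setminus\phi_-(b))=\operatorname{dSpec}\mathcal{M}\setminus\phi_-(h(b))=\phi_+(h(b)^\dag),
\]
where the last equality is Lemma \ref{prime d-filter}, applicable because $\mathcal{M}$ is d-Boolean. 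The case of $\delta_-$ is symmetric. Since $\operatorname{dClop}$ is a functor, it yields a d-Boolean algebra homomorphism
\[
\operatorname{dClop}(\operatorname{dSpec}\mathcal{L},\delta_+,\delta_-)\to\operatorname{dClop}(\operatorname{dSpec}\mathcal{M},\tau_+,\tau_-)\cong\mathcal{M}.
\]
I restrict it to the subalgebra $\Gamma\mathcal{L}$ and call the result $\bar h$. Tracing a generator, $(\phi_+(a),\phi_-(b))$ goes to $(\phi_+(h(a)),\phi_-(h(b)))$, and under the isomorphism of Proposition \ref{spectra of d-Boolean algebras}(ii) this corresponds to $h(a\sqcup b)$. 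Hence $\bar h\circ\gamma_\mathcal{L}=h$.

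\emph{Uniqueness.} First I check that any d-lattice homomorphism $k$ between d-Boolean algebras preserves d-complements: $k$ maps $x\sqcup x^\dag\in{\bf con}\cap{\bf tot}$ into ${\bf con}\cap{\bf tot}$, it respects $L_\pm$ because it preserves ${t\!t}$ and ${f\!\!f}$, and d-complements are unique. In the d-clopen algebra, the d-complement of $(\varnothing,\phi_-(b))$ is $(\operatorname{dSpec}\mathcal{L}\setminus\phi_-(b),\varnothing)$. By the description of $D_+$ and $D_-$ given before the statement, every element of $\Gamma\mathcal{L}$ is then obtained from elements of $\gamma_\mathcal{L}(\mathcal{L})$ using finite meets, finite joins and $\dag$. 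Two homomorphisms that agree on $\gamma_\mathcal{L}(\mathcal{L})$ therefore agree on all of $\Gamma\mathcal{L}$.

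The main obstacle is the bookkeeping behind the generation claim in the uniqueness step. One must confirm that the sets $\phi_+(a)\cap(\operatorname{dSpec}\mathcal{L}\setminus\phi_-(b))$, and their finite unions, are exactly what the d-Boolean operations produce from the generators. One must also confirm that the isomorphism $\mathcal{M}\cong\operatorname{dClop}\circ\operatorname{dSpec}\mathcal{M}$ is the map $(a,b)\mapsto(\phi_+(a),\phi_-(b))$, so that the triangle really commutes. I would verify the latter first by unwinding the isomorphisms of Propositions \ref{the d-Boolean algebra coreflection of Idl} and \ref{spectra of d-Boolean algebras}(i).
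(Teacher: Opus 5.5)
Your proposal is correct and follows the paper's own argument. Existence comes from pulling back d-clopen sets of the patch space along $g\mapsto g\circ h$ and identifying $\operatorname{dClop}\circ\operatorname{dSpec}\mathcal{M}$ with $\mathcal{M}$. Uniqueness holds because two homomorphisms that agree on the generators agree on the sub-d-Boolean algebra they generate.

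You also make explicit several points the paper leaves implicit: continuity into $\delta_\pm$ via Lemma \ref{prime d-filter}, preservation of $\dag$ by homomorphisms, and that the identification is $(a,b)\mapsto(\phi_+(a),\phi_-(b))$. You can check that last point directly rather than by unwinding: the map is injective by Proposition \ref{varphi is injective}, surjective by compactness of $\operatorname{dSpec}\mathcal{M}$ (Proposition \ref{spectra of d-Boolean algebras}(iii)), and reflects ${\bf con}$ and ${\bf tot}$ via $\phi_-(b)=\operatorname{dSpec}\mathcal{M}\setminus\phi_+(b^\dag)$.
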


\begin{proof}
The map $(a,b)\mapsto(\phi_+(a),\phi_-(b))$ is clearly a lattice homomorphism from $L$ to the underlying lattice $D_+\times D_-$ of $\Gamma\mathcal{L}$. To verify that it preserves ${\bf con}$ and ${\bf tot}$, suppose $a\in L_+$ and $b\in L_-$ with $a\sqcup b\in{\bf con}$. For any prime d-filter $g$, we have $g(a\sqcup b)\in\{0,{t\!t},{f\!\!f}\}$, so either $g\notin\phi_+(a)$ or $g\notin\phi_-(b)$; hence $\phi_+(a)\cap\phi_-(b)=\varnothing$. Thus $\gamma_\mathcal{L}$ preserves ${\bf con}$. Preservation of ${\bf tot}$ is analogous.

Now we show the universal property. Let $\mathcal{M}$ be a d-Boolean algebra and let $f\colon\mathcal{L}\to\mathcal{M}$ be a d-lattice homomorphism. We need a unique d-Boolean algebra homomorphism $\overline{f}\colon \Gamma\mathcal{L}\to\mathcal{M}$ such that $\overline{f}\circ\gamma_\mathcal{L}=f$.

Existence: The map $\operatorname{dSpec} f$, sending each prime d-filter $g$ of $\mathcal{M}$ to $g\circ f$, is a continuous map from $\operatorname{dSpec}\mathcal{M}$ to $\operatorname{dSpec}\mathcal{L}$, hence induces a continuous map between the respective patch spaces. Since $\mathcal{M}$ is a d-Boolean algebra, $\operatorname{dSpec}\mathcal{M}$ is its own patch space, and the d-Boolean algebra of its d-clopen sets is $\mathcal{M}$. Therefore $(\operatorname{dSpec} f)^{-1}$ gives a d-Boolean algebra homomorphism from the d-clopen sets of $(\operatorname{dSpec}\mathcal{L},\delta_+,\delta_-)$ to $\mathcal{M}$. Restricting to $\Gamma\mathcal{L}$ yields the desired $\overline{f}$.

Uniqueness: If $h_1,h_2\colon\Gamma\mathcal{L}\to\mathcal{M}$ satisfy $h_1\circ\gamma_\mathcal{L}=f=h_2\circ\gamma_\mathcal{L}$, then the subset of $D_+\times D_-$ on which they agree is a sub-d-Boolean algebra containing all generators, so $h_1=h_2$.
\end{proof}

Therefore, the category of d-Boolean algebras is a simultaneously reflective and coreflective full subcategory of the category of d-lattices. 
Together with the fact that Boolean algebras form a simultaneously reflective and coreflective full subcategory of the category ${\bf DisLat}$ of distributive lattices, this exhibits the category of d-lattices as a well-behaved extension of that of Boolean algebras:
\[
{\bf BA}\subseteq {\bf DisLat}\cong {\bf dBA}\subseteq {\bf dLat}.
\]

\begin{thm}\label{spectrum of Gamma = patch}
For every d-lattice, the spectrum of its d-Boolean algebra reflection coincides with the patch bitopological space of its spectrum.
\end{thm}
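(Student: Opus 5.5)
The plan is to exhibit an explicit bijection between $\operatorname{dSpec}\Gamma\mathcal{L}$ and $\operatorname{dSpec}\mathcal{L}$ and check that it carries the two topologies of the spectrum of $\Gamma\mathcal{L}$ onto $\delta_+$ and $\delta_-$. The natural candidate is the map
\[
\theta\colon \operatorname{dSpec}\Gamma\mathcal{L}\to\operatorname{dSpec}\mathcal{L},\qquad h\mapsto h\circ\gamma_\mathcal{L},
\]
that is, $\operatorname{dSpec}\gamma_\mathcal{L}$. Its candidate inverse is the evaluation map. To each $g\in\operatorname{dSpec}\mathcal{L}$ we attach
\[
\widehat g\colon D_+\times D_-\to\mathbb{B},\qquad \widehat g(U,V)=\begin{cases}1,& g\in U,\ g\in V,\\ {t\!t},& g\in U,\ g\notin V,\\ {f\!\!f},& g\notin U,\ g\in V,\\ 0,& \text{otherwise.}\end{cases}
\]
In other words, $\widehat g$ is the restriction to $\Gamma\mathcal{L}$ of the prime d-filter of the d-Boolean algebra of d-clopen sets of $(\operatorname{dSpec}\mathcal{L},\delta_+,\delta_-)$ given by evaluation at the point $g$.

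First I will check that $\widehat g$ is a prime d-filter. It preserves finite meets and joins because unions and intersections are computed pointwise. It preserves ${\bf con}$ and ${\bf tot}$ because $U\cap V=\varnothing$ forbids the value $1$ and $U\cup V=\operatorname{dSpec}\mathcal{L}$ forbids the value $0$. The constants are also respected.

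Next, I will verify that $\theta(\widehat g)=g$. Evaluating at $(\phi_+(a),\phi_-(b))$ recovers $g(a\sqcup b)$ by the very definition of $\phi_\pm$. For the other composite, $\widehat{\theta(h)}=h$, I will use the fact that $h$ and $\widehat{\theta(h)}$ are both d-lattice homomorphisms $\Gamma\mathcal{L}\to\mathbb{B}$ that agree on the generators $\gamma_\mathcal{L}(\mathcal{L})$. On $D_+$ the complement of $(\phi_+(a),\varnothing)$ is $(\varnothing,\operatorname{dSpec}\mathcal{L}\setminus\phi_+(a))$, and similarly on $D_-$. By Lemma \ref{prime d-filter}, any prime d-filter of a d-Boolean algebra sends d-complements to determined values. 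Hence the set on which two prime d-filters agree is closed under d-complements as well as under $\sqcap$ and $\sqcup$, so it is a sub-d-Boolean algebra. It contains the generators, so it is all of $\Gamma\mathcal{L}$. (This is the same argument as the uniqueness part of Proposition \ref{reflective}, and equivalently one can apply the reflection property with $\mathcal{M}=\mathbb{B}$.) Thus $\theta$ is a bijection.

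Finally, for the topologies: by definition, the first topology of $\operatorname{dSpec}\Gamma\mathcal{L}$ is generated by the sets $\{h: h(U,\varnothing)={t\!t}\}$ with $U\in D_+$. Under $\theta$ this set corresponds to $\{g: g\in U\}=U$. So the transported first topology is the topology generated by $D_+$. Every element of $D_+$ is $\delta_+$-open, and $D_+$ contains the subbasic sets $\phi_+(a)$ and $\operatorname{dSpec}\mathcal{L}\setminus\phi_-(b)$, the latter being the d-complement of $\phi_-(b)$ and therefore lying in $D_+$. Hence the transported topology equals $\delta_+$, and symmetrically the second equals $\delta_-$.

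The main obstacle is the identity $\widehat{\theta(h)}=h$, which needs the fact that homomorphisms into $\mathbb{B}$ from the d-Boolean algebra $\Gamma\mathcal{L}$ are determined by their values on the generators. I would handle this carefully via Lemma \ref{prime d-filter}. I would also double-check the description of $D_\pm$ as finite joins of sets of the form $\phi_+(a)\cap(\operatorname{dSpec}\mathcal{L}\setminus\phi_-(b))$, to confirm that $D_+$ is contained in $\delta_+$.
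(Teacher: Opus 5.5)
Your proof is correct. It uses the same map $\operatorname{dSpec}\gamma_\mathcal{L}$ as the paper, but you verify that this map is a homeomorphism onto the patch space in a different way.

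\textbf{The paper's route.}
\begin{itemize}
\item Bijectivity comes from the universal property of $\gamma_\mathcal{L}$ applied with $\mathcal{M}=\mathbb{B}$, so surjectivity goes through the existence half of Proposition~\ref{reflective}.
\item Continuity into $(\delta_+,\delta_-)$ comes from functoriality together with Proposition~\ref{spectrum of d-boolean algebra is d-Boolean}.
\item Openness is proved by computing, via Lemma~\ref{prime d-filter}, the image of $\phi_+(U)$ for $U$ of the normal form $\phi_+(a)\cap(\operatorname{dSpec}\mathcal{L}\setminus\phi_-(b))$. This needs the finite-join description of $D_+$.
\end{itemize}

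\textbf{Your route.}
\begin{itemize}
\item You write the inverse explicitly as evaluation $g\mapsto\widehat g$. This is exactly what the existence half of Proposition~\ref{reflective} produces when $\mathcal{M}=\mathbb{B}$.
\item The tautology $\widehat g(U,\varnothing)={t\!t}\iff g\in U$ holds for every $U\in D_+$. It therefore carries the whole subbasis of $\nu_+$ onto $D_+$ in one step, giving continuity and openness at once.
\item After that you only need two facts. First, $D_+\subseteq\delta_+$, which is automatic because $\Gamma\mathcal{L}$ sits inside the d-clopen algebra of the patch space. Second, $D_+$ contains the subbasis of $\delta_+$, which follows from closure under d-complements.
\item So the double-check of the finite-join description of $D_\pm$ that you plan at the end is unnecessary.
\item Lemma~\ref{prime d-filter} enters your argument only in the injectivity step, exactly as in the paper's uniqueness argument.
\end{itemize}

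Your argument is more elementary and self-contained. The paper's is shorter once the reflection machinery is in place, and it presents the theorem as a formal consequence of that machinery.

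Two small points to make explicit when you write it up. In checking $\theta(\widehat g)=g$ you use that $g(a)\in\{0,{t\!t}\}$ for $a\in L_+$ and $g(b)\in\{0,{f\!\!f}\}$ for $b\in L_-$, which follows from $g(a)\sqsubseteq g({t\!t})={t\!t}$ and the analogous inequality for ${f\!\!f}$. Also, the d-complement of $(\phi_+(a),\varnothing)$ lies in the $D_-$ component, not in $D_+$.
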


\begin{proof}
Let $\mathcal{L}=(L;{t\!t},{f\!\!f};{\bf con},{\bf tot})$ be a d-lattice and let $(\operatorname{dSpec}\mathcal{L},\tau_+,\tau_-)$ be its spectrum. Let $(\Gamma\mathcal{L},\gamma_\mathcal{L})$ be the ${\bf dBA}$-reflection, and let $(\operatorname{dSpec}\Gamma\mathcal{L},\nu_+,\nu_-)$ be the spectrum of the d-Boolean algebra $\Gamma\mathcal{L}$.

The universal property of $\gamma_\mathcal{L}$ implies that every prime d-filter $g$ of $\mathcal{L}$ extends uniquely to a prime d-filter $\overline{g}$ of $\Gamma\mathcal{L}$ such that $g=\overline{g}\circ\gamma_\mathcal{L}$. Hence the map
\[
\operatorname{dSpec}\gamma_\mathcal{L}\colon \operatorname{dSpec}\Gamma\mathcal{L}\to \operatorname{dSpec}\mathcal{L},\quad
\overline{g}\mapsto \overline{g}\circ\gamma_\mathcal{L}
\]
is a continuous bijection. By Proposition \ref{spectrum of d-boolean algebra is d-Boolean}, the spectrum of $\Gamma\mathcal{L}$ is its own patch space, so $\operatorname{dSpec}\gamma_\mathcal{L}$ is a continuous bijection from $(\operatorname{dSpec}\Gamma\mathcal{L},\nu_+,\nu_-)$ to the patch space $(\operatorname{dSpec}\mathcal{L},\delta_+,\delta_-)$. We show it is a homeomorphism of bitopological spaces by proving that both
\[
\operatorname{dSpec}\gamma_\mathcal{L}\colon (\operatorname{dSpec}\Gamma\mathcal{L},\nu_+)\to(\operatorname{dSpec}\mathcal{L},\delta_+)
\]
and the analogous map for $\nu_-$ are open.

Since every element of $D_+$ is a finite join of sets of the form
\[
U=\phi_+(a)\cap(\operatorname{dSpec}\mathcal{L}\setminus\phi_-(b)),\quad a\in L_+,\ b\in L_-,
\]
it suffices to show that the image under $\operatorname{dSpec}\gamma_\mathcal{L}$ of
\[
\phi_+(U)=\{\overline{g}\in\operatorname{dSpec}\Gamma\mathcal{L}: \overline{g}(U,\varnothing)={t\!t}\}
\]
belongs to $\delta_+$.
In the d-Boolean algebra $\Gamma\mathcal{L}$, the element $(\varnothing,\phi_-(b))$ is the d-complement of $(\operatorname{dSpec}\mathcal{L}\setminus\phi_-(b),\varnothing)$. By Lemma \ref{prime d-filter}, for every prime d-filter $\overline{g}$ of $\Gamma\mathcal{L}$,
\[
\overline{g}(\varnothing,\phi_-(b))=0 \iff \overline{g}(\operatorname{dSpec}\mathcal{L}\setminus\phi_-(b),\varnothing)={t\!t}.
\]
Thus, for $\overline{g}\in\operatorname{dSpec}\Gamma\mathcal{L}$ and $g=\overline{g}\circ\gamma_\mathcal{L}$,
\begin{align*}
\overline{g}\in\phi_+(U)
&\iff \overline{g}(U,\varnothing)={t\!t}\\
&\iff \overline{g}(\phi_+(a),\varnothing)={t\!t}
\quad\text{and}\quad
\overline{g}(\operatorname{dSpec}\mathcal{L}\setminus\phi_-(b),\varnothing)={t\!t}\\
&\iff \overline{g}(\phi_+(a),\varnothing)={t\!t}
\quad\text{and}\quad
\overline{g}(\varnothing,\phi_-(b))=0\\
&\iff g\in\phi_+(a)\quad\text{and}\quad g\in\operatorname{dSpec}\mathcal{L}\setminus\phi_-(b)\\
&\iff \operatorname{dSpec}\gamma_\mathcal{L}(\overline{g})\in U.
\end{align*}
Hence the image of $\phi_+(U)$ is $U$, which belongs to $\delta_+$. Therefore $\operatorname{dSpec}\gamma_\mathcal{L}$ is open for $\nu_+$; the argument for $\nu_-$ is similar.
\end{proof}

\begin{cor}
For each d-lattice $\mathcal{L}$, the d-Boolean algebra reflection $\Gamma\mathcal{L}$ is isomorphic to the d-Boolean algebra of d-clopen sets of the patch bitopological space of $\operatorname{dSpec}\mathcal{L}$.
\end{cor}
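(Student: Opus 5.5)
The plan is to chain three results already in hand. By Proposition \ref{spectra of d-Boolean algebras}(ii), applied to the d-Boolean algebra $\Gamma\mathcal{L}$,
\[
\Gamma\mathcal{L}\cong \operatorname{dClop}\circ\operatorname{dSpec}\Gamma\mathcal{L}.
\]
By Theorem \ref{spectrum of Gamma = patch}, the map $\operatorname{dSpec}\gamma_\mathcal{L}$ is a homeomorphism of bitopological spaces
\[
(\operatorname{dSpec}\Gamma\mathcal{L},\nu_+,\nu_-)\to(\operatorname{dSpec}\mathcal{L},\delta_+,\delta_-).
\]
Since $\operatorname{dClop}$ is a functor, it sends this homeomorphism to an isomorphism of d-Boolean algebras. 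Concretely, the isomorphism is given by taking preimages under $\operatorname{dSpec}\gamma_\mathcal{L}$.

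Composing the two isomorphisms gives
\[
\Gamma\mathcal{L}\cong \operatorname{dClop}(\operatorname{dSpec}\mathcal{L},\delta_+,\delta_-),
\]
which is exactly the claim.

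I would also identify the composite explicitly, as a sanity check. Let $(U,V)\in D_+\times D_-$. The isomorphism of Proposition \ref{spectra of d-Boolean algebras}(ii) sends $(U,V)$ to the pair $(\phi_+(U,\varnothing),\phi_-(\varnothing,V))$. The computation inside the proof of Theorem \ref{spectrum of Gamma = patch} shows that $\operatorname{dSpec}\gamma_\mathcal{L}$ carries $\phi_+(U,\varnothing)$ onto $U$, at least for generators $U$. By the analogous argument the same holds for $V$. So the composite should be the inclusion of $\Gamma\mathcal{L}$ into the d-clopen algebra of the patch space.

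This identification gives a further conclusion: every d-clopen set of the patch space already lies in $\Gamma\mathcal{L}$, i.e.\ the inclusion is surjective.

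The only point needing care is to check that the image computation extends from the generators to arbitrary finite joins. This follows because $\phi_+$ and the preimage map both preserve finite joins. I expect no real obstacle: the corollary is essentially a formal consequence of the theorem and the d-Boolean duality.
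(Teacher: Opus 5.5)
Your proof is correct and is exactly the argument the paper intends: the corollary is stated without proof, as an immediate consequence of Theorem \ref{spectrum of Gamma = patch} combined with Proposition \ref{spectra of d-Boolean algebras}(ii) and the functoriality of $\operatorname{dClop}$. Your additional observation that the composite isomorphism is the inclusion is also correct. It shows that $\Gamma\mathcal{L}$ actually equals the whole d-clopen algebra of the patch space, which slightly sharpens the stated result.
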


\begin{prop}\label{spectrum is compact}
The spectrum $(\operatorname{dSpec}\mathcal{L},\tau_+,\tau_-)$ of every d-lattice $\mathcal{L}$ is compact.
\end{prop}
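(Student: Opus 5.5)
The plan is to reduce compactness of the spectrum to compactness of the spectrum of the d-Boolean algebra reflection $\Gamma\mathcal{L}$, using Theorem \ref{spectrum of Gamma = patch}. Compactness of $(\operatorname{dSpec}\mathcal{L},\tau_+,\tau_-)$ means compactness of $(\operatorname{dSpec}\mathcal{L},\tau_+\vee\tau_-)$.

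Since $\tau_+\subseteq\delta_+$ and $\tau_-\subseteq\delta_-$, we have $\tau_+\vee\tau_-\subseteq\delta_+\vee\delta_-$. Compactness of $(\operatorname{dSpec}\mathcal{L},\delta_+\vee\delta_-)$ would therefore yield compactness of the coarser topology $\tau_+\vee\tau_-$. This reduces the proposition to showing that the patch space $(\operatorname{dSpec}\mathcal{L},\delta_+,\delta_-)$ is compact.

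By Theorem \ref{spectrum of Gamma = patch}, this patch space is homeomorphic to the spectrum of the d-Boolean algebra $\Gamma\mathcal{L}$. By Proposition \ref{spectra of d-Boolean algebras}(iii), that spectrum is a d-Boolean space. By definition (Proposition \ref{Stone bitopological space}), a d-Boolean space is compact, so its join topology is compact. Transporting this along the homeomorphism finishes the argument.

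The main obstacle is circularity. I have to check that Theorem \ref{spectrum of Gamma = patch} and Proposition \ref{spectra of d-Boolean algebras}(iii) do not themselves depend on Proposition \ref{spectrum is compact} for general d-lattices. Item (iii) is imported from \cite[Theorem 5.19]{YZ2026}, which concerns d-Boolean algebras only; there, compactness comes from the compact d-zero-dimensional d-frame $\operatorname{Idl}\mathcal{L}$ being spatial via (i). The proof of Theorem \ref{spectrum of Gamma = patch} uses only Proposition \ref{reflective}, Lemma \ref{prime d-filter}, and Proposition \ref{spectrum of d-boolean algebra is d-Boolean}, none of which invokes compactness of general spectra.

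If one preferred a direct argument as a fallback, I would proceed in two steps. First, reduce to subbasic covers via Alexander's subbase lemma, using covers by sets $\phi_+(a)$ and $\phi_-(b)$. Second, given a cover $\{\phi_+(a_i)\}\cup\{\phi_-(b_j)\}$ with no finite subcover, use the ideals generated by the $a_i$ and the $b_j$ together with ${\bf tot}$ to build, by a prime-filter separation as in Proposition \ref{varphi is injective}, a prime d-filter outside the cover. That argument is harder to make work with the ${\bf con}$ constraints, so the reduction above is the route I would take.
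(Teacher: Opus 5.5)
Your proposal is correct and follows the same route as the paper. The patch space is the spectrum of $\Gamma\mathcal{L}$ by Theorem \ref{spectrum of Gamma = patch}, so it is d-Boolean and hence compact, and compactness passes to the coarser topology $\tau_+\vee\tau_-\subseteq\delta_+\vee\delta_-$; your check that none of the cited results depend on compactness of general spectra is also sound.
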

\begin{proof}
By Theorem \ref{spectrum of Gamma = patch}, the patch space of $(\operatorname{dSpec}\mathcal{L},\tau_+,\tau_-)$ is d-Boolean, hence compact. It follows that $(\operatorname{dSpec}\mathcal{L},\tau_+,\tau_-)$ is compact.
\end{proof}

\begin{thm}\label{spectrum is spectral}
d-spectral spaces are precisely the spectra of d-lattices.
\end{thm}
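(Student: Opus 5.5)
One direction is already Theorem \ref{functor dSpec}: the spectrum of every d-lattice is d-spectral. So the work is the converse. Given a d-spectral space $(X,\tau_+,\tau_-)$, I will show it is homeomorphic to $\operatorname{dSpec}\mathcal{L}$ for $\mathcal{L}=\mathrm{K}\mathcal{O}(X,\tau_+,\tau_-)$, the d-lattice of compact open sets of Example \ref{KO}.

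\emph{Step 1: reduce to a statement about d-frames.} By Proposition \ref{spectrum as composite}, $\operatorname{dSpec}\mathcal{L}=\operatorname{dpt}\circ\operatorname{Idl}\mathcal{L}$. Since $X$ is d-sober, $X\cong\operatorname{dpt}\circ\mathrm{d}\mathcal{O}(X)$. It therefore suffices to prove that the d-frames $\operatorname{Idl}\mathcal{L}$ and $\mathrm{d}\mathcal{O}(X,\tau_+,\tau_-)$ are isomorphic.

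\emph{Step 2: the candidate isomorphism on underlying frames.} The underlying frame of $\operatorname{Idl}\mathcal{L}$ is $\operatorname{Idl}(K_+\times K_-)\cong \operatorname{Idl}K_+\times\operatorname{Idl}K_-$. Because $\tau_\pm$ are coherent, the maps $I\mapsto\bigcup I$ give frame isomorphisms $\operatorname{Idl}K_\pm\cong\tau_\pm$. Their product
\[
\Theta\colon \operatorname{Idl}L\to\tau_+\times\tau_-
\]
is a frame isomorphism that preserves ${t\!t}$ and ${f\!\!f}$. What remains is to show that $\Theta$ and $\Theta^{-1}$ preserve ${\bf con}$ and ${\bf tot}$.

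\emph{Step 3: consistency predicate.} Let $I=I_+\times I_-$ be an ideal contained in ${\bf con}$. For any $U\in I_+$ and $V\in I_-$, the element $(U,V)$ lies in $I$, so $U\cap V=\varnothing$. Hence $\bigcup I_+\cap\bigcup I_-=\varnothing$. Conversely, if $\bigcup I_+\cap\bigcup I_-=\varnothing$, then every $(U,V)\in I$ satisfies $U\cap V=\varnothing$, so $I\subseteq{\bf con}$. This step needs no compactness.

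\emph{Step 4: totality predicate, the main point.} If $I$ meets ${\bf tot}$, say $(U,V)\in I$ with $U\cup V=X$, then clearly $\bigcup I_+\cup\bigcup I_-=X$. For the converse, suppose $\bigcup I_+\cup\bigcup I_-=X$. Then $X$ is covered by members of $K_+\cup K_-$, all of which are $\tau_+\vee\tau_-$-open. Compactness of $(X,\tau_+\vee\tau_-)$, which is condition (i) of Definition \ref{d-Spectral space}, gives a finite subcover. Joining the finitely many $\tau_+$-sets into one $U\in I_+$ and the $\tau_-$-sets into one $V\in I_-$ (ideals are closed under finite joins) yields $(U,V)\in I\cap{\bf tot}$. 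This is exactly where compactness of the bitopological space enters, and I expect it to be the only non-formal step.

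With Steps 2--4, $\Theta$ is an isomorphism of d-frames $\operatorname{Idl}\mathcal{L}\cong\mathrm{d}\mathcal{O}(X)$. Applying $\operatorname{dpt}$ gives
\[
\operatorname{dSpec}\mathcal{L}\cong\operatorname{dpt}\circ\mathrm{d}\mathcal{O}(X)\cong X,
\]
which completes the proof. Explicitly, the homeomorphism sends $x$ to the prime d-filter $(U,V)\mapsto$ the value recording whether $x\in U$ and whether $x\in V$. One should also check that $\mathrm{K}\mathcal{O}(X)$ satisfies the nondegeneracy assumption $\{{t\!t},{f\!\!f}\}\neq\{1,0\}$. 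This holds whenever $X\neq\varnothing$; the empty space can be handled separately or excluded by convention.
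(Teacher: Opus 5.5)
Your proposal is correct and follows essentially the same route as the paper: use $\operatorname{dSpec}=\operatorname{dpt}\circ\operatorname{Idl}$ and d-sobriety to reduce the converse to showing that the union map $\operatorname{Idl}\mathrm{K}\mathcal{O}(X)\to\mathrm{d}\mathcal{O}(X)$ is a d-frame isomorphism, with coherence giving the frame isomorphism. Your Step 4 also makes explicit something the paper's proof leaves implicit by calling the totality case ``analogous'': compactness of $(X,\tau_+\vee\tau_-)$ is what makes the map reflect totality.
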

\begin{proof}
Let $\mathcal{L}=(L;{t\!t},{f\!\!f};{\bf con},{\bf tot})$ be a d-lattice. By Theorem \ref{functor dSpec}, its spectrum $(\operatorname{dSpec}\mathcal{L},\tau_+,\tau_-)$ is a d-spectral space.

Conversely, suppose $(X,\nu_+,\nu_-)$ is d-spectral. Since it is d-sober, we have
\[
(X,\nu_+,\nu_-)=\operatorname{dpt} \circ\mathrm{d}\mathcal{O}(X,\nu_+,\nu_-).
\]
By Proposition \ref{spectrum as composite},
\[
\operatorname{dSpec}\circ\mathrm{K}\mathcal{O}(X,\nu_+,\nu_-)
=
\operatorname{dpt} \circ\operatorname{Idl}\circ\mathrm{K}\mathcal{O}(X,\nu_+,\nu_-).
\]
Thus it suffices to show that the d-frame $\operatorname{Idl}\circ\mathrm{K}\mathcal{O}(X,\nu_+,\nu_-)$ is isomorphic to $\mathrm{d}\mathcal{O}(X,\nu_+,\nu_-)$.

Let $K_+$ and $K_-$ be the sets of compact open sets of $(X,\nu_+)$ and $(X,\nu_-)$, respectively. Define
\[
\kappa\colon \operatorname{Idl} K_+\times\operatorname{Idl} K_- \to  \nu_+\times\nu_-
\]
by
\[
\kappa(I_+,I_-)=\Big(\bigcup_{U\in I_+}U,\;\bigcup_{V\in I_-}V\Big).
\]
It is readily checked that $\kappa$ is a frame isomorphism because $\nu_+$ and $\nu_-$ are coherent. We claim that $\kappa$ is actually an isomorphism of d-frames; it remains to verify that it preserves and reflects both predicates.

For consistency: if $(I_+,I_-)$ satisfies $U\cap V=\varnothing$ for all $U\in I_+$, $V\in I_-$, then
\[
\bigcup_{U\in I_+}U\cap\bigcup_{V\in I_-}V=\varnothing,
\]
so $\kappa(I_+,I_-)\in{\bf con}$. Conversely, if $\kappa(I_+,I_-)\in{\bf con}$, then for any compact $U\subseteq \bigcup I_+$ and compact $V\subseteq \bigcup I_-$, we have $U\cap V=\varnothing$, so the pair belongs to the consistency predicate of $\operatorname{Idl}\circ\mathrm{K}\mathcal{O}$. The argument for totality is analogous. Hence $\kappa$ is an isomorphism of d-frames.
\end{proof}

\section{The category of d-spectral spaces}

For each d-lattice homomorphism $f\colon\mathcal{L}\to\mathcal{M}$, define
\[
\operatorname{dSpec} f\colon \operatorname{dSpec}\mathcal{M}\to \operatorname{dSpec}\mathcal{L}
\]
by $g\mapsto g\circ f$. It is readily verified that for all $a\in L_+$ and $b\in L_-$,
\[
(\operatorname{dSpec} f)^{-1}(\phi_+(a))=\phi_+(f(a)),\quad
(\operatorname{dSpec} f)^{-1}(\phi_-(b))=\phi_-(f(b)).
\]
Then, by Proposition \ref{varphi is injective}, $\operatorname{dSpec} f\colon \operatorname{dSpec}\mathcal{M}\to\operatorname{dSpec}\mathcal{L}$ is a d-spectral map. Thus we obtain a contravariant functor
\[
\operatorname{dSpec}\colon {\bf dLat}^{\rm op}\to  {\bf dSpectral}.
\]

\begin{prop}
The category ${\bf dSpectral}$ is a reflective subcategory of the category of bitopological spaces.
\end{prop}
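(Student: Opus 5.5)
The plan is to imitate the classical construction of the spectral reflection of a topological space, in which one passes to the spectrum of the lattice of all open sets. Throughout, a reflection of $(X,\tau_+,\tau_-)$ means a d-spectral space $RX$ with a continuous map $\eta_X\colon X\to RX$ such that every continuous map $f\colon X\to Y$ into a d-spectral space factors as $f=\overline f\circ\eta_X$ for a unique d-spectral map $\overline f\colon RX\to Y$.

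\emph{Construction.} Let $\mathcal{L}_X=\mathrm{d}\mathcal{O}(X,\tau_+,\tau_-)$, which is in particular a d-lattice. Put $RX=\operatorname{dSpec}\mathcal{L}_X$; it is d-spectral by Theorem \ref{functor dSpec}. Define $\eta_X(x)=g_x$, where
\[
g_x(U,V)=\begin{cases}1,&x\in U\cap V,\\ {t\!t},&x\in U\setminus V,\\ {f\!\!f},&x\in V\setminus U,\\ 0,&x\notin U\cup V.\end{cases}
\]
It is routine that $g_x$ is a d-lattice homomorphism $\mathcal{L}_X\to\mathbb{B}$: it preserves finite meets and joins, and it sends ${\bf con}$ into $\{0,{t\!t},{f\!\!f}\}$ and ${\bf tot}$ into $\{1,{t\!t},{f\!\!f}\}$. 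The key formula is
\[
\eta_X^{-1}(\phi_+(U))=U,\qquad \eta_X^{-1}(\phi_-(V))=V .
\]
This gives continuity of $\eta_X$ at once, since the sets $\phi_\pm$ form bases.

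\emph{Existence of the factorization.} Let $f\colon X\to Y$ be continuous with $Y=(Y,\nu_+,\nu_-)$ d-spectral. Then $(U,V)\mapsto(f^{-1}U,f^{-1}V)$ is a d-lattice homomorphism $h_f\colon \mathrm{K}\mathcal{O}(Y)\to\mathcal{L}_X$, because preimages preserve finite unions and intersections, disjointness, and covering. Applying the functor $\operatorname{dSpec}$ yields a d-spectral map $\operatorname{dSpec}h_f\colon RX\to\operatorname{dSpec}\mathrm{K}\mathcal{O}(Y)$.

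Next I need an explicit homeomorphism $\epsilon_Y\colon Y\to\operatorname{dSpec}\mathrm{K}\mathcal{O}(Y)$, namely $y\mapsto g_y$ with $g_y$ defined by membership exactly as above. Its existence follows from the proof of Theorem \ref{spectrum is spectral}: $Y\cong\operatorname{dpt}\mathrm{d}\mathcal{O}(Y)\cong\operatorname{dpt}\operatorname{Idl}\mathrm{K}\mathcal{O}(Y)=\operatorname{dSpec}\mathrm{K}\mathcal{O}(Y)$. Tracing the d-point attached to $y$ through these identifications should show that it is the membership filter $g_y$. Moreover $\epsilon_Y$ is d-spectral, since it pulls $\phi_\pm(W)$ back to $W$.

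Set $\overline f=\epsilon_Y^{-1}\circ\operatorname{dSpec}h_f$. The triangle commutes because $g_x\circ h_f=g_{f(x)}$, which one checks directly on pairs $(U,V)$. Hence $\overline f\circ\eta_X=f$.

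\emph{Uniqueness.} Suppose $k_1,k_2\colon RX\to Y$ are d-spectral with $k_i\circ\eta_X=f$. Take a compact open $W$ of $(Y,\nu_+)$. Then $k_i^{-1}(W)$ is compact open in $\tau_+$ of $RX$, so by Proposition \ref{varphi is injective} it equals $\phi_+(U_i)$ for some $U_i$ in the $\tau_+$-component of $\mathcal{L}_X$. Applying $\eta_X^{-1}$ gives
\[
U_i=\eta_X^{-1}k_i^{-1}(W)=f^{-1}(W),
\]
so $U_1=U_2$ and therefore $k_1^{-1}(W)=k_2^{-1}(W)$. The same argument applies to $\nu_-$. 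Since $Y$ is d-sober, it is $T_0$, and the compact opens of $\nu_+$ and $\nu_-$ form bases of the two topologies. A point of $Y$ is therefore determined by the compact opens containing it, which forces $k_1(p)=k_2(p)$ for every $p\in RX$.

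I expect the main obstacle to be the identification of $\epsilon_Y$ as a homeomorphism given by the membership formula, i.e.\ making the abstract isomorphism in the proof of Theorem \ref{spectrum is spectral} concrete. If tracing through that isomorphism becomes awkward, I would prove it directly instead. Injectivity comes from $T_0$ together with the compact-open bases. For surjectivity, send a prime d-filter $g$ of $\mathrm{K}\mathcal{O}(Y)$ to the pair of closed sets given by the complements of $\bigcup\{U: g(U,\varnothing)={t\!t}\}$ and $\bigcup\{V: g(\varnothing,V)={f\!\!f}\}$, verify that these satisfy the two d-sobriety conditions (using compactness of the opens and primeness of $g$), and take the unique generic point that d-sobriety then provides.
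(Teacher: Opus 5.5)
Your construction, $RX=\operatorname{dSpec}\mathrm{d}\mathcal{O}(X)$, is the same one the paper uses; the paper states it and leaves the details to the reader. Your main line is correct. This covers the unit $\eta_X$ with $\eta_X^{-1}(\phi_\pm(W))=W$, the factorization $\overline f=\epsilon_Y^{-1}\circ\operatorname{dSpec}h_f$ via $g_x\circ h_f=g_{f(x)}$, and uniqueness among d-spectral maps via Proposition \ref{varphi is injective} and $T_0$. Identifying $\epsilon_Y(y)=g_y$ by tracing the isomorphisms in the proof of Theorem \ref{spectrum is spectral} is also legitimate, and the paper itself later relies on $\operatorname{dSpec}\circ\mathrm{K}\mathcal{O}\cong\mathrm{id}$.

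Your fallback direct proof of surjectivity of $\epsilon_Y$ fails as written.

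\emph{The wrong closed sets.} Since $g(Y,\varnothing)=g({t\!t})={t\!t}$ and $g(\varnothing,Y)={f\!\!f}$, the unions $\bigcup\{U: g(U,\varnothing)={t\!t}\}$ and $\bigcup\{V: g(\varnothing,V)={f\!\!f}\}$ are all of $Y$. So both closed sets you produce are empty, and d-sobriety cannot be applied.

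\emph{The fix.} Take the complements of the unions of the prime ideals, $\bigcup\{U: g(U,\varnothing)=0\}$ and $\bigcup\{V: g(\varnothing,V)=0\}$. With these, compactness of each compact open shows that a compact open $U$ meets the first closed set iff $g(U,\varnothing)={t\!t}$, and similarly on the $-$ side. This gives nonemptiness, irreducibility, condition (i) (via preservation of ${\bf con}$), and finally $g=g_y$.

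\emph{What condition (ii) needs.} Compactness of the individual opens does not suffice here; you need compactness of $(Y,\nu_+\vee\nu_-)$. From $W\cup W'=Y$ with $W,W'$ inside the two ideal-unions, you must extract finitely many members of the two ideals covering $Y$. That yields a single $(U,V)\in{\bf tot}$ with $g(U,V)=0$, contradicting that $g$ preserves ${\bf tot}$. This is exactly where the compactness axiom of d-spectral spaces enters. For a d-sober space with coherent frames that is not bitopologically compact, $\epsilon_Y$ is injective but cannot be onto, since spectra are always compact by Proposition \ref{spectrum is compact}.
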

\begin{proof}
For each bitopological space $X$, the spectrum of the d-frame of open sets of $X$ (viewed as a d-lattice) is the d-spectral reflection of $X$. Details are left to the reader.
\end{proof}

The points of the d-spectral reflection of $X$ are precisely the d-lattice homomorphisms from the d-frame of open sets of $X$ to the d-lattice $\mathbb{B}$; the subspace consisting of d-frame maps is the d-sobrification of $X$.

Now we prove a lemma analogous to Lemma \ref{spectrum preserves involution}. Let $\mathcal{L}=(L;{t\!t},{f\!\!f};{\bf con},{\bf tot})$ be a d-lattice. The \emph{order-dual} $\mathcal{L}^\partial$ of $\mathcal{L}$ \cite[Definition 2.1.4]{Klinke} is the d-lattice
\[
(L^{\rm op};{t\!t}^\partial,{f\!\!f}^\partial;{\bf con}^\partial,{\bf tot}^\partial),
\]
where $L^{\rm op}$ is the opposite lattice of $L$, and
\begin{itemize} \setlength{\itemsep}{0pt}
\item ${t\!t}^\partial={f\!\!f}$,
\item ${f\!\!f}^\partial={t\!t}$,
\item ${\bf con}^\partial={\bf tot}$,
\item ${\bf tot}^\partial={\bf con}$.
\end{itemize}

For the d-lattice $\mathcal{L}^\partial$, the subset $L_+^\partial$ is $\{a\sqcup{f\!\!f}: a\in L_+\}$ with the opposite order, and $L_-^\partial$ is $\{{t\!t}\sqcup b: b\in L_-\}$ with the opposite order, as visualized below:
\begin{center}
\begin{tikzpicture}[scale=0.6]  
\fill [lightgray] (0,0) rectangle (5,4); 
\draw[thick] (0,4) -- (5,4); 
\draw[thick] (5,0) -- (5,4);  
\node at (2.5,4.5) {{\small $L_+^\partial$}}; 
\node at (6,-0.3) {{\small ${f\!\!f}^\partial={t\!t}$}}; 
\node at (5.6,2) {{\small $L_-^\partial$}};  
\node at (-1,4.3) {{\small ${f\!\!f}={t\!t}^\partial$}}; 
\node at (-1,-0.3) {{\small $0=1^\partial$}}; 
\node at (6,4.3) {{\small $0^\partial=1$}}; 
\draw [fill] (0,4) circle [radius=0.05]; 
\draw [fill] (5,0) circle [radius=0.05];  
\end{tikzpicture}
\end{center}

\begin{lem}\label{dspec preserves involution}
For every d-lattice $\mathcal{L}=(L;{t\!t},{f\!\!f};{\bf con},{\bf tot})$, the spectrum of the order-dual $\mathcal{L}^\partial$ is homeomorphic to the de Groot dual of the spectrum of $\mathcal{L}$.
\end{lem}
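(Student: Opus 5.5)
The plan follows the proof of Lemma \ref{spectrum preserves involution}. First I set up a bijection between prime d-filters of $\mathcal{L}$ and of $\mathcal{L}^\partial$. Then I compare the generating open sets on the two sides. Finally I identify the co-compact topologies $\tau_+^k$ and $\tau_-^k$ of the spectrum of $\mathcal{L}$ by means of the compact open sets described in Proposition \ref{varphi is injective}.

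\emph{Step 1 (bijection of points).} Let $n\colon\mathbb{B}\to\mathbb{B}$ be the map with $n(0)=1$, $n(1)=0$, $n({t\!t})={f\!\!f}$ and $n({f\!\!f})={t\!t}$. I check that $n$ is a d-lattice isomorphism $\mathbb{B}^\partial\to\mathbb{B}$.
\begin{itemize}
\item It is an order isomorphism $\mathbb{B}^{\rm op}\to\mathbb{B}$.
\item It sends ${t\!t}^\partial={f\!\!f}$ to ${t\!t}$ and ${f\!\!f}^\partial={t\!t}$ to ${f\!\!f}$.
\item It sends ${\bf con}^\partial={\bf tot}=\{1,{t\!t},{f\!\!f}\}$ onto $\{0,{f\!\!f},{t\!t}\}={\bf con}$, and ${\bf tot}^\partial={\bf con}$ onto ${\bf tot}$.
\end{itemize}
A prime d-filter of $\mathcal{L}$ is a d-lattice homomorphism $g\colon\mathcal{L}\to\mathbb{B}$. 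Such a $g$ is equally a homomorphism $\mathcal{L}^\partial\to\mathbb{B}^\partial$, since $(-)^\partial$ is functorial on morphisms: the underlying map is the same and the data ${t\!t},{f\!\!f},{\bf con},{\bf tot}$ are all swapped consistently. Hence $g\mapsto n\circ g$ is a bijection
\[
\operatorname{dSpec}\mathcal{L}\to\operatorname{dSpec}\mathcal{L}^\partial,
\]
with inverse $h\mapsto n^{-1}\circ h$. Through this bijection I regard the spectrum of $\mathcal{L}^\partial$ as the set $\operatorname{dSpec}\mathcal{L}$ carrying topologies $\tau_+^\partial$ and $\tau_-^\partial$.

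\emph{Step 2 (generators).} The elements of $L_+^\partial$ are the $a\sqcup{f\!\!f}$ with $a\in L_+$. So $\tau_+^\partial$ is generated by the sets
\[
\{g: n(g(a\sqcup{f\!\!f}))={t\!t}\}=\{g: g(a)\sqcup{f\!\!f}={f\!\!f}\}.
\]
Here I used $g(a\sqcup{f\!\!f})=g(a)\sqcup g({f\!\!f})$ and $g({f\!\!f})={f\!\!f}$. Since $a\sqsubseteq{t\!t}$, we have $g(a)\in\{0,{t\!t}\}$, so the condition $g(a)\sqcup{f\!\!f}={f\!\!f}$ says exactly $g(a)=0$. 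This set is therefore $\operatorname{dSpec}\mathcal{L}\setminus\phi_+(a)$. Symmetrically, $\tau_-^\partial$ is generated by the sets $\operatorname{dSpec}\mathcal{L}\setminus\phi_-(b)$ for $b\in L_-$.

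\emph{Step 3 (co-compact topology).} It remains to show that $\tau_+^k$ is generated by the complements $\operatorname{dSpec}\mathcal{L}\setminus\phi_+(a)$ for $a\in L_+$, and similarly for $\tau_-^k$. This step needs an argument rather than a calculation, though it is the same as in Lemma \ref{spectrum preserves involution}. By Proposition \ref{varphi is injective}, the compact open sets of $(\operatorname{dSpec}\mathcal{L},\tau_+)$ are exactly the sets $\phi_+(a)$, and they form a basis closed under finite unions.
\begin{itemize}
\item Each $\phi_+(a)$ is compact and saturated, so its complement is $\tau_+^k$-open.
\item Conversely, let $K$ be compact saturated. Being saturated, $K$ is the intersection of its open neighbourhoods. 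Each open $W\supseteq K$ is a union of basic sets $\phi_+(a)$, and by compactness finitely many of them cover $K$. Their union is again some $\phi_+(a')$ with $K\subseteq\phi_+(a')\subseteq W$.
\end{itemize}
Hence $K=\bigcap\{\phi_+(a):K\subseteq\phi_+(a)\}$. So every subbasic closed set of $\tau_+^k$ is an intersection of sets $\phi_+(a)$, and $\tau_+^k=\tau_+^\partial$. The same argument gives $\tau_-^k=\tau_-^\partial$, which proves the homeomorphism.
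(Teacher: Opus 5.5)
Your proposal is correct and follows essentially the same route as the paper. The negation map on $\mathbb{B}$ gives the bijection of prime d-filters, the generators of the dual spectrum are identified as the complements $\operatorname{dSpec}\mathcal{L}\setminus\phi_\pm(\cdot)$, and the compact open basis from Proposition~\ref{varphi is injective} then identifies these topologies with $\tau_\pm^k$; your explicit check that $n$ is a d-lattice isomorphism $\mathbb{B}^\partial\to\mathbb{B}$ and your Step 3 just spell out what the paper calls easily seen and leaves as immediate.
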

\begin{proof} 
Let $(\operatorname{dSpec}\mathcal{L},\tau_+,\tau_-)$ be the spectrum of $\mathcal{L}$.
Let $\neg\colon \mathbb{B}\to\mathbb{B}$ be the negation operator on the Boolean algebra $\mathbb{B}$; that is,
\[
\neg 0=1,\quad \neg 1=0,\quad \neg{t\!t}={f\!\!f},\quad \neg{f\!\!f}={t\!t}.
\]
It is easily seen that $g\colon L\to\mathbb{B}$ is a prime d-filter of $\mathcal{L}$ if and only if $\neg g$ is a prime d-filter of $\mathcal{L}^\partial$. Thus the correspondence $g\mapsto \neg g$ is a bijection from the set of prime d-filters of $\mathcal{L}$ to that of $\mathcal{L}^\partial$. Hence we may view the spectrum of $\mathcal{L}^\partial$ as the set $\operatorname{dSpec}\mathcal{L}$ equipped with topologies $\tau_+^\partial$ and $\tau_-^\partial$, where $\tau_+^\partial$ is generated by the basis
\[
\phi^\partial_+(a)\coloneqq
\{g\in\operatorname{dSpec}\mathcal{L}: \neg g(a\sqcup{f\!\!f})={t\!t}\}
=
\{g\in\operatorname{dSpec}\mathcal{L}: g(a)=0\},\quad a\in L_+,
\]
and $\tau_-^\partial$ is generated by
\[
\phi^\partial_-(b)\coloneqq
\{g\in\operatorname{dSpec}\mathcal{L}: \neg g({t\!t}\sqcup b)={f\!\!f}\}
=
\{g\in\operatorname{dSpec}\mathcal{L}: g(b)=0\},\quad b\in L_-.
\]
For every $a\in L_+$ and $g\in\operatorname{dSpec}\mathcal{L}$,
\[
g\in\operatorname{dSpec}\mathcal{L}\setminus\phi_+(a)
\iff g(a)=0
\iff g\in\phi^\partial_+(a).
\]
Then, since the family $\{\phi_+(a):a\in L_+\}$ of compact open sets is a basis for $(\operatorname{dSpec}\mathcal{L},\tau_+)$, it follows that $\tau^\partial_+=\tau_+^k$. Likewise, $\tau^\partial_-=\tau_-^k$. Therefore, the spectrum of $\mathcal{L}^\partial$ is homeomorphic to the de Groot dual of the spectrum of $\mathcal{L}$.
\end{proof}

\begin{thm}\label{patch is spectral}
For every d-spectral space $(X,\tau_+,\tau_-)$:
\begin{enumerate}[label=\rm(\roman*)] \setlength{\itemsep}{0pt}
\item the de Groot dual $(X,\tau_+^k,\tau_-^k)$ is d-spectral;
\item the patch space $(X,\tau_+\vee\tau_-^k,\; \tau_-\vee\tau_+^k)$ is d-Boolean.
\end{enumerate}
\end{thm}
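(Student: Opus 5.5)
By Theorem \ref{spectrum is spectral}, every d-spectral space is a spectrum. So we may assume $(X,\tau_+,\tau_-)=(\operatorname{dSpec}\mathcal{L},\tau_+,\tau_-)$ for some d-lattice $\mathcal{L}$; concretely, $\mathcal{L}=\mathrm{K}\mathcal{O}(X,\tau_+,\tau_-)$. Both statements are invariant under homeomorphism of bitopological spaces, because co-compact topologies and joins of topologies are defined intrinsically from the topologies. It therefore suffices to prove (i) and (ii) for spectra of d-lattices.

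For (i), we will use Lemma \ref{dspec preserves involution}. It shows that the de Groot dual $(\operatorname{dSpec}\mathcal{L},\tau_+^k,\tau_-^k)$ is homeomorphic to the spectrum of the order-dual $\mathcal{L}^\partial$. Before invoking it, we should check that $\mathcal{L}^\partial$ really is a d-lattice; this is presumably taken from \cite[Definition 2.1.4]{Klinke}. The axioms are symmetric under reversing $\sqsubseteq$ and exchanging ${t\!t}\leftrightarrow{f\!\!f}$ and ${\bf con}\leftrightarrow{\bf tot}$. For instance, the ({\bf con}--{\bf tot}) axiom turns into itself after rewriting $\alpha\sqcap{t\!t}$ in $L^{\rm op}$ as $\alpha\sqcup{f\!\!f}$, and $\alpha\sqcup{f\!\!f}$ and $\alpha\sqcap{t\!t}$ determine each other via $L\cong L_+\times L_-$. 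Theorem \ref{functor dSpec} then shows that the spectrum of $\mathcal{L}^\partial$ is d-spectral, and hence so is the de Groot dual.

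For (ii), we will use Theorem \ref{spectrum of Gamma = patch}. It identifies the patch space of $\operatorname{dSpec}\mathcal{L}$, up to homeomorphism, with $\operatorname{dSpec}\Gamma\mathcal{L}$. Since $\Gamma\mathcal{L}$ is a d-Boolean algebra, Proposition \ref{spectra of d-Boolean algebras}(iii) says its spectrum is d-Boolean. One point needs checking here. The patch topologies $\delta_\pm$ in Theorem \ref{spectrum of Gamma = patch} are defined by the subbases $\{\phi_+(a)\}\cup\{\complement\phi_-(b)\}$, whereas the statement uses $\tau_+\vee\tau_-^k$. We must show these agree, that is, that $\tau_-^k$ is generated by the complements $\operatorname{dSpec}\mathcal{L}\setminus\phi_-(b)$ for $b\in L_-$.

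That check is the main obstacle, though it is routine. By Proposition \ref{varphi is injective}, the sets $\phi_-(b)$ are exactly the compact open sets of $\tau_-$, and they form a basis closed under finite intersections. Every such set is compact and saturated, so each complement $\operatorname{dSpec}\mathcal{L}\setminus\phi_-(b)$ is $\tau_-^k$-open. Conversely, let $Q$ be compact saturated. Being saturated, $Q$ is the intersection of its open neighbourhoods. By compactness, each open neighbourhood contains a finite union of basic opens $\phi_-(b_i)$ covering $Q$, and that union is $\phi_-(\bigsqcup b_i)$. Hence $Q=\bigcap\{\phi_-(b): Q\subseteq\phi_-(b)\}$. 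The same argument as in Lemma \ref{spectrum preserves involution} and Proposition \ref{spectrum of d-boolean algebra is d-Boolean} then shows that the complements of the $\phi_-(b)$ generate $\tau_-^k$. Symmetrically, the complements of the $\phi_+(a)$ generate $\tau_+^k$.

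Thus $\delta_+=\tau_+\vee\tau_-^k$ and $\delta_-=\tau_-\vee\tau_+^k$, and (ii) follows.
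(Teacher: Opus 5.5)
Your proposal is correct and follows the paper's proof. The paper likewise reduces to a spectrum via Theorem~\ref{spectrum is spectral}, proves (i) with Lemma~\ref{dspec preserves involution} and Theorem~\ref{functor dSpec}, and proves (ii) with Theorem~\ref{spectrum of Gamma = patch} and Proposition~\ref{spectra of d-Boolean algebras}. Your extra checks are valid but are steps the paper leaves implicit: that $\mathcal{L}^\partial$ is a d-lattice, and that the complements of the sets $\phi_-(b)$ generate $\tau_-^k$, so that $\delta_\pm=\tau_\pm\vee\tau_\mp^k$.
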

\begin{proof}
By Theorem \ref{spectrum is spectral}, $(X,\tau_+,\tau_-)$ is the spectrum of some d-lattice $\mathcal{L}$. By Lemma \ref{dspec preserves involution}, $(X,\tau_+^k,\tau_-^k)$ is the spectrum of the order-dual $\mathcal{L}^\partial$, hence d-spectral by Theorem \ref{functor dSpec}. This proves (i).

For (ii), by Theorem \ref{spectrum of Gamma = patch}, the patch space
\[
(X,\tau_+\vee\tau_-^k,\; \tau_-\vee\tau_+^k)
\]
coincides with the spectrum of the d-Boolean algebra reflection of $\mathcal{L}$, so it is a d-Boolean space by Proposition \ref{spectra of d-Boolean algebras}.
\end{proof}

\begin{prop}\label{dSpec & KO}
The functor $\operatorname{dSpec}\colon {\bf dLat}^{\rm op}\to{\bf dSpectral}$ is right adjoint to the functor $\mathrm{K}\mathcal{O}\colon {\bf dSpectral}\to{\bf dLat}^{\rm op}$.
\end{prop}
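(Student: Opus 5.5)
We will show, for every d-spectral space $X=(X,\tau_+,\tau_-)$ and every d-lattice $\mathcal{L}$, a bijection natural in both variables
\[
{\bf dSpectral}(X,\operatorname{dSpec}\mathcal{L})\;\cong\;{\bf dLat}(\mathcal{L},\mathrm{K}\mathcal{O}X),
\]
which is exactly the statement that $\mathrm{K}\mathcal{O}\dashv\operatorname{dSpec}$ as functors ${\bf dSpectral}\to{\bf dLat}^{\rm op}$ and back. We will construct the unit $\eta_X\colon X\to\operatorname{dSpec}\mathrm{K}\mathcal{O}X$ and check the universal property. For $x\in X$ define $\eta_X(x)\colon K_+\times K_-\to\mathbb{B}$ by evaluation at $x$. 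The value $\eta_X(x)(U,V)$ is $1$, ${t\!t}$, ${f\!\!f}$ or $0$ according as $x\in U\cap V$, $x\in U\setminus V$, $x\in V\setminus U$, or $x\notin U\cup V$.

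\textbf{Step 1: $\eta_X$ is well defined.} Evaluation at a point preserves $\sqcap$, $\sqcup$ (componentwise intersection and union) and sends ${t\!t}=(X,\varnothing)$ to ${t\!t}$ and ${f\!\!f}=(\varnothing,X)$ to ${f\!\!f}$. If $U\cap V=\varnothing$, the value avoids $1$; if $U\cup V=X$, it avoids $0$. So $\eta_X(x)$ is a d-lattice homomorphism $\mathrm{K}\mathcal{O}X\to\mathbb{B}$, i.e.\ a prime d-filter.

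\textbf{Step 2: $\eta_X$ is a homeomorphism.} From the definitions, $\eta_X^{-1}(\phi_+(U))=U$ and $\eta_X^{-1}(\phi_-(V))=V$. By Proposition~\ref{varphi is injective}, the compact opens of the spectrum are exactly the sets $\phi_\pm(\cdot)$, so $\eta_X$ is a d-spectral map. In the proof of Theorem~\ref{spectrum is spectral}, $\operatorname{Idl}\mathrm{K}\mathcal{O}X\cong\mathrm{d}\mathcal{O}X$ and $\operatorname{dSpec}=\operatorname{dpt}\circ\operatorname{Idl}$. Combining these with d-sobriety of $X$, we will argue that $\eta_X$ is exactly the sobriety homeomorphism $X\cong\operatorname{dpt}\,\mathrm{d}\mathcal{O}X$ transported along $\kappa$. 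Hence $\eta_X$ is an isomorphism in ${\bf dSpectral}$, since its inverse is also d-spectral.

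\textbf{Step 3: the bijection.} Given a d-spectral map $h\colon X\to\operatorname{dSpec}\mathcal{L}$, send it to
\[
h^\sharp\colon\mathcal{L}\to\mathrm{K}\mathcal{O}X,\qquad a\sqcup b\mapsto\bigl(h^{-1}\phi_+(a),\,h^{-1}\phi_-(b)\bigr).
\]
Because $h$ is d-spectral, these preimages are compact open, so $h^\sharp$ lands in $\mathrm{K}\mathcal{O}X$. It is a lattice homomorphism because $\phi_\pm$ are. It preserves ${\bf con}$ and ${\bf tot}$ by the argument of Proposition~\ref{reflective}: $\phi_+(a)\cap\phi_-(b)=\varnothing$ when $a\sqcup b\in{\bf con}$, and the union is everything when $a\sqcup b\in{\bf tot}$. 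Conversely, a d-lattice homomorphism $f\colon\mathcal{L}\to\mathrm{K}\mathcal{O}X$ yields the map $\operatorname{dSpec}f\circ\eta_X$. These assignments are mutually inverse:
\begin{itemize}
\item $(\operatorname{dSpec}f\circ\eta_X)^\sharp=f$ by the preimage formula $(\operatorname{dSpec}f)^{-1}(\phi_\pm(a))=\phi_\pm(f(a))$ together with Step 2;
\item $\operatorname{dSpec}(h^\sharp)\circ\eta_X=h$ because for each $x$ both sides are the prime d-filter $a\sqcup b\mapsto$ the pattern of membership of $x$ in $h^{-1}\phi_+(a)$ and $h^{-1}\phi_-(b)$, which equals $h(x)(a\sqcup b)$.
\end{itemize}
Naturality in both variables is routine, since everything is defined by preimages and composition.

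The main obstacle is Step 2: we must check carefully that $\eta_X$ is surjective and open, using d-sobriety. Every prime d-filter $g$ of $\mathrm{K}\mathcal{O}X$ extends uniquely to a d-point of $\operatorname{Idl}\mathrm{K}\mathcal{O}X\cong\mathrm{d}\mathcal{O}X$, which d-sobriety realizes by a unique point. Strictly speaking, bijectivity of $\eta_X$ is not needed for the adjunction, only the universal property of Step 3. We nonetheless record that $\eta_X$ is an isomorphism, since it shows $\mathrm{K}\mathcal{O}$ is full and faithful.
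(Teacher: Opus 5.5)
Your proposal is correct and follows the paper's route. Your assignment $h\mapsto h^\sharp$ is exactly the paper's correspondence $h\mapsto\mathrm{K}\mathcal{O}(h)\circ\epsilon_\mathcal{L}$. You go further than the paper by giving the inverse $f\mapsto\operatorname{dSpec}f\circ\eta_X$ and checking both composites, which the paper leaves implicit. As you note, the adjunction only uses the identities $\eta_X^{-1}(\phi_+(U))=U$ and $\eta_X^{-1}(\phi_-(V))=V$; the homeomorphism claim in Step 2 is extra, and it matches the paper's later remark that $\operatorname{dSpec}\circ\mathrm{K}\mathcal{O}$ is naturally isomorphic to the identity.
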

\begin{proof}
We show that for every d-lattice $\mathcal{L}$ and every d-spectral space $X$, there is a natural bijection between d-spectral maps $X\to\operatorname{dSpec}\mathcal{L}$ and d-lattice homomorphisms $\mathcal{L}\to\mathrm{K}\mathcal{O}(X)$.

For all $a\in L_+$ and $b\in L_-$, by Proposition \ref{varphi is injective}, the pair $(\phi_+(a),\phi_-(b))$ belongs to the d-lattice $\mathrm{K}\mathcal{O}\circ\operatorname{dSpec}\mathcal{L}$ of compact open sets of $\operatorname{dSpec}\mathcal{L}$. Assigning to each pair $(a,b)$ the pair $(\phi_+(a),\phi_-(b))$ defines a d-lattice homomorphism
\[
\epsilon_\mathcal{L}\colon \mathcal{L}\to  \mathrm{K}\mathcal{O}\circ\operatorname{dSpec}\mathcal{L}.
\]
The correspondence
\[
f\colon X\to\operatorname{dSpec}\mathcal{L}
\; \longmapsto \;
\mathrm{K}\mathcal{O}(f)\circ\epsilon_\mathcal{L}\colon \mathcal{L}\to\mathrm{K}\mathcal{O}(X)
\]
is then the desired bijection.
\end{proof}

\begin{thm}
The category of d-Boolean spaces is a simultaneously reflective and coreflective full subcategory of the category of d-spectral spaces and d-spectral maps.
\end{thm}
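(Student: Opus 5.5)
The plan is to transfer the algebraic result, that ${\bf dBA}$ is reflective and coreflective in ${\bf dLat}$ (Proposition \ref{reflective} and the adjunction $\operatorname{dB}$), across a duality between ${\bf dSpectral}$ and ${\bf dLat}^{\rm op}$. The first step is to upgrade Proposition \ref{dSpec & KO} to a dual equivalence. We must check that the counit $X\to\operatorname{dSpec}\circ\mathrm{K}\mathcal{O}(X)$ is a homeomorphism for d-spectral $X$; this is the content of the proof of Theorem \ref{spectrum is spectral}, since d-sobriety gives $X\cong\operatorname{dpt}\circ\mathrm{d}\mathcal{O}(X)\cong\operatorname{dpt}\circ\operatorname{Idl}\circ\mathrm{K}\mathcal{O}(X)$. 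We must also check that the unit $\epsilon_\mathcal{L}\colon\mathcal{L}\to\mathrm{K}\mathcal{O}\circ\operatorname{dSpec}\mathcal{L}$ is an isomorphism. Bijectivity on $L_+$ and $L_-$ follows from Proposition \ref{varphi is injective}. The remaining point is that $\epsilon_\mathcal{L}$ reflects ${\bf con}$ and ${\bf tot}$: if $\phi_+(a)\cap\phi_-(b)=\varnothing$, then $a\sqcup b\in{\bf con}$. This is the step I expect to be the main obstacle. To handle it, suppose $a\sqcup b\notin{\bf con}$ and construct a prime d-filter $g$ with $g(a)={t\!t}$ and $g(b)={f\!\!f}$. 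I would adapt the $C_F,T_F$ construction in Proposition \ref{varphi is injective}: first pick a prime filter $F_+\ni a$ of $L_+$ such that $b\notin C_{F_+}$, using a prime filter separation with respect to the ideal $\{x: x\sqcup b\in{\bf con}\}$ of $L_+$, which excludes $a$. Then pick a prime $F_-\ni b$ extending $T_{F_+}$ and avoiding $C_{F_+}$. Totality is handled dually.

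Given the equivalence, the second step identifies d-Boolean spaces with spectra of d-Boolean algebras. Proposition \ref{spectra of d-Boolean algebras}(iii) gives one direction. Conversely, if $X$ is d-Boolean, then $\mathrm{K}\mathcal{O}(X)=\operatorname{dClop}(X)$, because compact opens are exactly the d-clopens by the proof of the proposition characterizing d-Boolean spaces, and $\operatorname{dClop}(X)$ is a d-Boolean algebra. So the equivalence ${\bf dSpectral}\simeq{\bf dLat}^{\rm op}$ restricts to ${\bf dBoolSp}\simeq{\bf dBA}^{\rm op}$. Fullness is already known: every continuous map between d-Boolean spaces is d-spectral.

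Under the contravariant equivalence, reflections and coreflections swap. The coreflector $\operatorname{dB}$ of ${\bf dBA}\subseteq{\bf dLat}$ therefore yields a reflection of ${\bf dSpectral}$ onto ${\bf dBoolSp}$, namely $X\mapsto\operatorname{dSpec}\operatorname{dB}\mathrm{K}\mathcal{O}(X)$, which is $\operatorname{dSpec}$ of the d-Boolean algebra of d-clopens. Likewise the reflector $\Gamma$ yields a coreflection $X\mapsto\operatorname{dSpec}\Gamma\mathrm{K}\mathcal{O}(X)$. By Theorem \ref{spectrum of Gamma = patch}, this coreflection is concretely the patch space, with coreflection map the identity from the patch space to $X$. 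I would state this explicitly to tie the result to Theorem \ref{patch is spectral}(ii). Naturality comes for free from the functoriality of $\operatorname{dB}$, $\Gamma$ and the equivalence.
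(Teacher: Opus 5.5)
Your first step cannot be carried out. $\operatorname{dSpec}$ is not a dual equivalence; the paper says so explicitly in its closing remarks. The claim you single out as the main obstacle, that $\phi_+(a)\cap\phi_-(b)=\varnothing$ forces $a\sqcup b\in{\bf con}$, is false in general.

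Here is a small counterexample.
\begin{itemize}
\item Let $L_+=\{0,p,q,1\}$ and $L_-=\{0,u,v,1\}$ be four-element Boolean algebras, $L=L_+\times L_-$, ${t\!t}=(1,0)$ and ${f\!\!f}=(0,1)$.
\item Put ${\bf con}=\{(x,y):x=0\text{ or }y=0\}$ and ${\bf tot}=\{(x,y):x=1\text{ or }y=1\text{ or }(q\sqsubseteq x\text{ and }u\sqsubseteq y)\}$.
\item The d-lattice axioms are checked directly. For (con--tot): a consistent element with nonzero first coordinate has second coordinate $0$, and a total element with second coordinate $0$ has first coordinate $1$; the other case is symmetric.
\item A prime d-filter is the same as a pair $(F_+,F_-)$ of prime filters such that no $(x,y)\in{\bf con}$ has $x\in F_+$ and $y\in F_-$, and every $(x,y)\in{\bf tot}$ has $x\in F_+$ or $y\in F_-$.
\item Here the ${\bf con}$ condition is vacuous. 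The ${\bf tot}$ condition says exactly ``$q\in F_+$ or $u\in F_-$'', which excludes only $(\uparrow p,\uparrow v)$.
\end{itemize}
Hence $\phi_+(p)=\{(\uparrow p,\uparrow u)\}$ and $\phi_-(v)=\{(\uparrow q,\uparrow v)\}$ are disjoint, while $(p,v)\notin{\bf con}$. So $\epsilon_\mathcal{L}$ does not reflect ${\bf con}$, and $\mathcal{L}\not\cong\mathrm{K}\mathcal{O}(\operatorname{dSpec}\mathcal{L})$ even though the two have homeomorphic spectra.

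Your construction breaks at the choice of $F_-$. The only prime filter containing $p$ is $F_+=\uparrow p$, and $u\in T_{F_+}$ via $(q,u)\in{\bf tot}$, so no prime filter contains both $T_{F_+}$ and $v$. Arranging $b\notin C_{F_+}$ is not enough. You would need $t\sqcap b\notin C_{F_+}$ for every $t\in T_{F_+}$, and no axiom guarantees this. Passing to $\mathcal{L}^\partial$ shows that the analogous claim for ${\bf tot}$ fails too.

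The rest of your argument survives, because you never need the full duality. The universal properties are tested only against d-Boolean objects, and $\Gamma\mathcal{L}$ and $\operatorname{dB}\mathcal{L}$ are d-Boolean. What you actually need is:
\begin{enumerate}
\item[(1)] $\mathrm{K}\mathcal{O}$ is fully faithful on ${\bf dSpectral}$. This follows from the adjunction $\mathrm{K}\mathcal{O}\dashv\operatorname{dSpec}$ together with the natural isomorphism $X\cong\operatorname{dSpec}\mathrm{K}\mathcal{O}(X)$ from the proof of Theorem \ref{spectrum is spectral}.
\item[(2)] $\epsilon_\mathcal{M}$ is an isomorphism for every d-Boolean algebra $\mathcal{M}$.
\end{enumerate}
For (2), d-complements do what your construction could not. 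If $\phi_+(a)\cap\phi_-(b)=\varnothing$, then $\phi_-(b)\subseteq\phi_-(a^\dag)$ by Lemma \ref{prime d-filter}. Hence $b\sqsubseteq a^\dag$ by Proposition \ref{varphi is injective}, so $a\sqcup b\sqsubseteq a\sqcup a^\dag\in{\bf con}$. The case of ${\bf tot}$ is dual.

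Now let $X$ be d-spectral and $Y$ d-Boolean, so that $\mathrm{K}\mathcal{O}Y$ is d-Boolean. Then, naturally,
\[
{\bf dSpectral}(Y,\operatorname{dSpec}\Gamma\mathrm{K}\mathcal{O}X)\cong{\bf dLat}(\Gamma\mathrm{K}\mathcal{O}X,\mathrm{K}\mathcal{O}Y)\cong{\bf dLat}(\mathrm{K}\mathcal{O}X,\mathrm{K}\mathcal{O}Y)\cong{\bf dSpectral}(Y,X),
\]
\[
{\bf dSpectral}(\operatorname{dSpec}\operatorname{dB}\mathrm{K}\mathcal{O}X,Y)\cong{\bf dLat}(\mathrm{K}\mathcal{O}Y,\operatorname{dB}\mathrm{K}\mathcal{O}X)\cong{\bf dLat}(\mathrm{K}\mathcal{O}Y,\mathrm{K}\mathcal{O}X)\cong{\bf dSpectral}(X,Y).
\]
The first isomorphism in the second chain uses $Y\cong\operatorname{dSpec}\mathrm{K}\mathcal{O}Y$, the adjunction, and (2). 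Combined with Proposition \ref{spectra of d-Boolean algebras}(iii), this gives exactly the coreflection and reflection that the paper's short proof names. The coreflection is the patch space, by Theorem \ref{spectrum of Gamma = patch}; the reflection is $\operatorname{dSpec}\operatorname{dB}\mathrm{K}\mathcal{O}X$. Your final paragraph arrives at the same pair.

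Equivalently, you can replace your false equivalence by the paper's closing observation: ${\bf dSpectral}$ is dually equivalent to a reflective subcategory of ${\bf dLat}$, and that subcategory contains ${\bf dBA}$.
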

\begin{proof}
For a d-spectral space $(X,\tau_+,\tau_-)$, let $\mathrm{K}\mathcal{O}(X,\tau_+,\tau_-)$ be the d-lattice of compact open sets of $(X,\tau_+,\tau_-)$. Then, the d-Boolean space coreflection of $(X,\tau_+,\tau_-)$ is given by the patch space
\[
(X,\tau_+\vee\tau_-^k,\; \tau_-\vee\tau_+^k),
\]
which coincides with the spectrum of the ${\bf dBA}$-reflection of the d-lattice $\mathrm{K}\mathcal{O}(X,\tau_+,\tau_-)$; the d-Boolean space reflection is given by the spectrum of the ${\bf dBA}$-coreflection of $\mathrm{K}\mathcal{O}(X,\tau_+,\tau_-)$, i.e., the spectrum of $\operatorname{dB}\circ\mathrm{K}\mathcal{O}(X,\tau_+,\tau_-)$.
\end{proof}

The relationships among the categories considered in this paper are summarized in the following diagram:
\[
\bfig
\hSquares[{\bf BA}`{\bf DisLat}`{\bf dBA}`{\bf BoolSp}`{\bf Spectral}`{\bf dBoolSp} ; 
  {\rm r.c.}`\cong ` {\rm d.e.} `{\rm d.e.}` {\rm d.e.}` {\rm r.c.}` \cong ] 
\morphism(1780,500)<700,0>[ `{\bf dLat};{\rm r.c.}]
\morphism(1880,0)|b|<600,0>[ `{\bf dSpectral};{\rm r.c.}]
\morphism(2480,500)|r|<0,-500>[{\bf dLat}`{\bf dSpectral}; {?}]
\efig
\]
where
\begin{itemize} \setlength{\itemsep}{0pt}
\item[$\bullet$] {\rm r.c.} means simultaneously reflective and coreflective;
\item[$\bullet$] $\cong$ means equivalent;
\item[$\bullet$] ${\rm d.e.}$ means dually equivalent.
\end{itemize}

It is natural to ask whether the functor
\[
\operatorname{dSpec}\colon{\bf dLat}^{\rm op}\to{\bf dSpectral}
\]
is also an equivalence of categories. Example 5.16 in \cite{YZ2026} shows that there exist non-isomorphic d-lattices having homeomorphic spectra, which implies a negative answer to the question. However, the argument of Theorem \ref{spectrum is spectral} shows that the composite $\operatorname{dSpec}\circ\mathrm{K}\mathcal{O}$ is naturally isomorphic to the identity functor on ${\bf dSpectral}$. This, together with Proposition \ref{dSpec & KO}, yields:

\begin{prop}
The functor $\operatorname{dSpec}\colon{\bf dLat}^{\rm op}\to{\bf dSpectral}$ is simultaneously a right adjoint and a left inverse. Hence, the category of d-spectral spaces and d-spectral maps is dually equivalent to a reflective subcategory of the category of d-lattices and d-lattice homomorphisms.
\end{prop}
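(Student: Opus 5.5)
The plan is to prove the two assertions separately. First, $\operatorname{dSpec}$ is a left inverse of $\mathrm{K}\mathcal{O}$, in the sense that $\operatorname{dSpec}\circ\mathrm{K}\mathcal{O}\cong\mathrm{id}_{\bf dSpectral}$ naturally. Second, $\operatorname{dSpec}$ is right adjoint to $\mathrm{K}\mathcal{O}$. The adjunction is Proposition \ref{dSpec \& KO}, so the real work is the natural isomorphism and the categorical consequence.

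For the natural isomorphism, let $X=(X,\tau_+,\tau_-)$ be d-spectral. The proof of Theorem \ref{spectrum is spectral} gives a d-frame isomorphism $\kappa\colon\operatorname{Idl}\circ\mathrm{K}\mathcal{O}(X)\to\mathrm{d}\mathcal{O}(X)$. Proposition \ref{spectrum as composite} gives $\operatorname{dSpec}\circ\mathrm{K}\mathcal{O}(X)=\operatorname{dpt}\circ\operatorname{Idl}\circ\mathrm{K}\mathcal{O}(X)$. Hence applying $\operatorname{dpt}$ to $\kappa$ and composing with the d-sobriety homeomorphism $X\cong\operatorname{dpt}\circ\mathrm{d}\mathcal{O}(X)$ yields a homeomorphism $\eta_X\colon X\to\operatorname{dSpec}\mathrm{K}\mathcal{O}(X)$.

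Concretely, $\eta_X(x)$ is the prime d-filter sending $(U,V)$ to the element of $\mathbb{B}$ recording whether $x\in U$ and whether $x\in V$. Under $\eta_X$, the set $\phi_+(U)$ corresponds to $U$ and $\phi_-(V)$ to $V$. By Proposition \ref{varphi is injective}, these are exactly the compact opens on both sides, so $\eta_X$ and its inverse are d-spectral maps.

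Naturality is the routine part. For a d-spectral map $h\colon X\to Y$, check that $\operatorname{dSpec}\mathrm{K}\mathcal{O}(h)\circ\eta_X=\eta_Y\circ h$. Evaluating both sides at $x$ on a pair $(U,V)$ gives membership of $h(x)$ in $U$ and $V$ in each case, so they agree pointwise.

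It remains to obtain the categorical statement. Combine the adjunction $\mathrm{K}\mathcal{O}\dashv\operatorname{dSpec}$ with the fact that its counit is invertible. The counit, viewed on the ${\bf dSpectral}$ side, is the map $X\to\operatorname{dSpec}\mathrm{K}\mathcal{O}(X)$, and we must check that it coincides with $\eta_X$. I expect this identification to be the main subtle point. It amounts to unwinding the bijection of Proposition \ref{dSpec \& KO} at $\mathcal{L}=\mathrm{K}\mathcal{O}(X)$ and identity morphism, and seeing that it yields the evaluation map above.

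Standard adjunction theory then says that the left adjoint $\mathrm{K}\mathcal{O}$ is fully faithful. Equivalently, the essential image of $\mathrm{K}\mathcal{O}$ in ${\bf dLat}^{\rm op}$ is a full subcategory equivalent to ${\bf dSpectral}$. That image is coreflective in ${\bf dLat}^{\rm op}$ via the unit $\epsilon_\mathcal{L}$. Dualizing, it is a reflective subcategory of ${\bf dLat}$, with reflection $\mathcal{L}\to\mathrm{K}\mathcal{O}\operatorname{dSpec}\mathcal{L}$. This gives the claimed dual equivalence of ${\bf dSpectral}$ with a reflective subcategory of ${\bf dLat}$.
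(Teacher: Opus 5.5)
Your proposal is correct and follows the paper's own argument. The paper also obtains the natural isomorphism $\operatorname{dSpec}\circ\mathrm{K}\mathcal{O}\cong\mathrm{id}$ from the proof of Theorem \ref{spectrum is spectral} and combines it with the adjunction $\mathrm{K}\mathcal{O}\dashv\operatorname{dSpec}$, and you supply the standard categorical details it leaves implicit. Two small points: for $\mathrm{K}\mathcal{O}\dashv\operatorname{dSpec}$, the map $X\to\operatorname{dSpec}\mathrm{K}\mathcal{O}(X)$ is the \emph{unit} and $\epsilon_\mathcal{L}$ is the \emph{counit} (you swapped the names, though your conclusions are unaffected); and the identification you flag as the subtle point can be skipped, since any natural isomorphism $\operatorname{dSpec}\circ\mathrm{K}\mathcal{O}\cong\mathrm{id}$ already forces the unit of the adjunction to be invertible.
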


\end{document}